
\documentclass[journal]{IEEEtran}
\ifCLASSINFOpdf
\else
\fi
\hyphenation{op-tical net-works semi-conduc-tor}
\usepackage{amsmath}
\usepackage{graphicx}
\usepackage{subfigure}
\usepackage{amsmath}
\usepackage{algorithm}
\usepackage{algpseudocode}
\usepackage{cite}
\usepackage{multicol}
\usepackage{multirow}
\usepackage{mathtools}
\usepackage{amssymb}
\usepackage{amsthm}
\usepackage{color}
\usepackage{bm}
\usepackage{indentfirst}
\usepackage{url}
\usepackage{nomencl}
\usepackage{booktabs}
\usepackage{cite}
\usepackage[numbers,sort&compress]{natbib}\usepackage{amsmath}
\usepackage{graphicx}
\usepackage{subfigure}
\usepackage{amsmath}
\usepackage{algorithm}
\usepackage{algpseudocode}
\usepackage{cite}
\usepackage{multicol}
\usepackage{multirow}
\usepackage{mathtools}
\usepackage{amssymb}
\usepackage{color}
\usepackage{bm}
\usepackage{indentfirst}
\usepackage{url}
\usepackage{nomencl}
\usepackage{booktabs}
\usepackage{cite}
\usepackage[numbers,sort&compress]{natbib}

\newtheorem{theorem}{Theorem}
\newtheorem{lemma}{Lemma}
\newtheorem{proposition}{Proposition}
\newtheorem{corollary}{Corollary}
\newtheorem{definition}{Definition}
\newtheorem{remark}{Remark}

\begin{document}
%
\title{A Proximal Linearization-based Decentralized  Method for Nonconvex Problems with  Nonlinear Constraints}
%
%
%

\author{Yu~Yang,~\IEEEmembership{Student Member,~IEEE,}
	Guoqiang~Hu,~\IEEEmembership{Senior Member,~IEEE,}
	and~Costas~J.~Spanos,~\IEEEmembership{Fellow,~IEEE}
	\thanks{This  work  was  supported  by  the  Republic  of  Singapore’s  National  Research  Foundation  through  a  grant  to  the  Berkeley  Education  Alliance  for  Research  in  Singapore
		(BEARS)  for  the  Singapore-Berkeley  Building  Efficiency  and  Sustainability  in  the
		Tropics  (SinBerBEST)  Program.  BEARS  has  been  established  by  the  University  of  California,  Berkeley  as  a  center  for  intellectual  excellence  in  research  and  education  in
		Singapore.}
	\thanks{Yu Yang is with SinBerBEST, Berkeley Education 	Alliance for Research in Singapore, Singapore 138602 e-mail: (yu.yang@bears-berkeley.sg).}
	\thanks{Guoqiang Hu is with the School 	of Electrical and Electronic Engineering, Nanyang Technological University,
		Singapore, 639798 e-mail: (gqhu@ntu.edu.sg).}
	\thanks{Costas J. Spanos is with the Department of Electrical Engineering and 	Computer Sciences, University of California, Berkeley, CA, 94720 USA email: (spanos@berkeley.edu).}
}

\maketitle

\begin{abstract}

Decentralized optimization for non-convex problems are now demanding by many emerging applications (e.g., smart grids, smart building, etc.). Though dramatic progress has been achieved in convex problems,  the results for non-convex cases, especially with non-linear constraints, are  still largely unexplored. This is mainly due to the challenges imposed by the non-linearity and non-convexity,  which makes establishing the convergence conditions bewildered.
 This paper investigates decentralized optimization for  a class of structured non-convex problems characterized by: {\em{(i)}} nonconvex global objective function (possibly nonsmooth) and {\em{(ii)}}  coupled nonlinear constraints and local bounded convex constraints w.r.t. the agents. For such  problems,  a decentralized  approach called  {\em{Proximal Linearization-based Decentralized Method}} (PLDM) is proposed. 
 Different from the traditional (augmented) Lagrangian-based methods which usually require the exact (local) optima  at each iteration, the proposed method leverages a proximal linearization-based technique to update the decision variables iteratively,  which makes it computationally efficient and viable for the non-linear cases.   Under some standard conditions,  the PLDM's global convergence and local convergence rate to the $\epsilon$-critical points are studied based on the Kurdyka-Łojasiewicz property which holds for most analytical functions.  
Finally,  the performance  and efficacy of the method are  illustrated  through a numerical example and an application to multi-zone heating, ventilation and air-conditioning (HVAC) control. 


\end{abstract}

\begin{IEEEkeywords}
decentralized optimization, nonconvex problems,  coupled nonlinear constraints,  proximal linearization, augmented Lagrangian-based  method.
\end{IEEEkeywords}

%
\IEEEpeerreviewmaketitle

\section{Introduction}
This decade has seen the unprecedented computation demand arising from large-scale networked systems. 
Examples arising from smart grids and smart buildings include  electric vehicle charging management \cite{yang2017distributed, yang2018decentralized, yang2017stochastic, yang2016joint, long2021efficient}, peer-to-peer energy trading \cite{yang2022optimal, chen2022towards}, energy storage sharing \cite{yang2021optimal, yang2020selling},  and the multi-zone heating, ventilation and air-conditioning  (HVAC) control  \cite{radhakrishnan2016token, yang2020hvac, yang2021distributed, yang2021stochastic} etc. The past decades have witnessed the revived interests and dramatic progress  in decentralized optimization, especially for convex problems  (see for examples,  \cite{yuan2016regularized,  terelius2011decentralized, boyd2011distributed}). 
Nevertheless, the presence of  complex dynamic systems and big data (see \cite{ maharjan2016demand, cherukuri2018distributed} and references therein) nowadays requests for decentralized approaches  that  work for non-convex and non-linear context, which still remains an open question and has not been well established. 
This is mainly due to the intrinsic challenges imposed by the non-linearity and non-convexity, which lead to the NP-hard complexity as the existence of multiple optima, the lack of structure properties to guarantee the presence of optima (e.g., convexity and strong convexity), and the deficit of conditions to investigate convergence (e.g., sufficient global optimality conditions).

\subsection{Related Works}
In general, the available results  on (decentralized) non-convex optimization can be categorized based on the problem structures: {\em{(i)}} unconstrained non-convex problems, {\em{(ii)}} linearly constrained non-convex problem, and {\em{(iii)}} non-linearly constrained non-convex problems. 
Unless specified, we refer the constraints  above to the coupled constraints among different agents. 
For unconstrained problems, the most straightforward approaches  are  (sub-)gradient methods,  whose convergence has been  early established for convex problems (e.g., see \cite{bertsekas1997nonlinear}).  Recently, their extension \cite{khamaru2018convergence} and  variations, such as Frank-Wolf algorithm \cite{ lacoste2016convergence}, proximal gradient method \cite{ zeng2018nonconvex},  have been discussed for non-convex cases. 
The broad results are that some local optima (i.e., critical points) can  be approached with diminishing step-sizes.
Generally, the above works are mainly focused on investigating the convergence of  such methods for non-convex situations rather than achieving decentralized computation.  
On that basis, some decentralized paradigms by combining (proximal) gradient-based methods with alternating minimization technique (i.e, Gauss-Seidel update) have been proposed, which include proximal alternating linearized minimization (PALM)  \cite{bolte2014proximal, xu2013block}, inexact proximal gradient methods (IPG) \cite{gu2018inexact} and some variations \cite{nikolova2018alternating, attouch2010proximal}.

As the noteworthy performance of the methods of multipliers (MMs) or augmented Lagrangian-based methods, such as  alternating direction multiplier method  (ADMM) \cite{boyd2011distributed},  has been  thoroughly observed and  understood in tackling coupled (linear) constraints in convex context,  their extension to linearly coupled non-convex problems seems natural.  
The recent years have witnessed the widespread discussions on decentralized optimization for non-convex problems subject to coupled linear constraints under the Lagrangian-based framework, especially ADMM and its variations (see \cite{davis2016convergence, davis2017faster, he20121} and the references therein).    A comprehensive survey of ADMM and its variants for constrained optimization is available \cite{yang2022survey}. 
In principle,  ADMM  can leverage the fast convergence feature of augmented Lagrangian methods and the separable structure of dual decomposition  based on the alternating minimization technique (i.e, Gauss-Seidel update). 
 Nevertheless,  in contrast to convex cases (see \cite{davis2016convergence, davis2017faster, he20121}),   it's not straightforward or trivial  to  achieve their performance guarantee in non-convex context. 
 Though favorable performance of ADMM and its variations does have  been observed and reported  in various applications involving non-convexity (see for examples, \cite{erseghe2014distributed, xu2012alternating, yang2022proximal}),  
 the theoretical understandings are still  fairly limited and deficient except for \cite{hong2016convergence, magnusson2015convergence, chatzipanagiotis2017convergence,  wang2018convergence}, where some results have been established for special structured problems.
 
With the  prevalence of complex multi-agent dynamic systems, growing demand has been raised for decentralized non-convex optimization with coupled non-linear constraints. Such examples  include  the optimal power flow (OPF) control  \cite{liu2017real},  and the multi-zone heating, ventilation and air-conditioning  (HVAC) control in buildings \cite{radhakrishnan2016token, yang2020hvac, yang2021distributed, yang2021stochastic}, etc.
The non-linear constraints generally arise from the coupled complex system dynamics, which are pivotal while designing decentralized controllers.
 However, such non-linear couplings  further compound the difficulties and challenges to develop decentralized computing paradigms.  
  The complexities mainly stem  from  \emph{i)} the lack of standard framework to deal with such general non-linear constraints;
  \emph{ii)} the difficulties  to ensure feasibility  of  the coupled nonlinear constraints while performing decentralized computing; 
  \emph{iii)} the challenges to investigate the convergence of a specific decentralized algorithm without any structure properties (e.g., convexity, strong convexity, etc).
  Though the augmented Lagrangian methods are appealing in deal with coupled constraints, their extension to  general non-linear constraints are not straightforward or well-founded.  Moreover, the existing standard augmented  Lagrangian-based framework generally require the exact optimization (e.g.. local optima) at each iteration, which are not viable for nonlinear and nonconvex cases practically. 
Surprisingly,  though lack of theoretical foundations, the favorable performance  of  some Lagrangian-based methods  also has been  observed  in some applications (e.g. matrix completion and factorization \cite{xu2012alternating}, optimal power flow  \cite{erseghe2014distributed, magnusson2015distributed}). This has inspired some recent exploratory studies on their theoretical understandings \cite{Giesen2018DistributedCO, wang2017nonconvex, sun2019two}.  For example,  \cite{Giesen2018DistributedCO} studied the direct extension of ADMM to two-block convex problems with coupled non-linear but separable constraints. 
\cite{shi2017penalty} proposed a tailored penalty dual decomposition (PPD) method by combining penalty method and augmented Lagrangian method to tackle  non-convex problems with non-linear constraints.  
Except those, there exist another two excellent recent works that have  shed some light  on such situations.  One is \cite{sun2019two} which \emph{i)} thoroughly and systematically investigated the intrinsic challenges to establish convergence guarantee for ADMM in such situations; and \emph{ii)} resorted to a two-level nested  framework  as a remedy.  However, \cite{sun2019two} generally requires the joint optimization of  multi-block non-linear problem at each iteration in the inner loop, which are not viable or attainable in practice.
Another noteworthy work is \cite{bolte2018nonconvex} which investigated the general conditions for  Lagrangian-based framework to  achieve global convergence guarantee in non-convex context. Rather than proposing a specific algorithm,  \cite{bolte2018nonconvex} seems more focused on  establish a general framework and leaves the algorithm design open.

Overall, two key points from the status quo that may necessitate our attention.
\emph{First},  the above recent progress on non-convex optimization are mainly attributed to the establishment of 
Kurdyka-Łojasiewicz properties hold by many analytical functions, which was first proposed in  \cite{kurdyka1998gradients, lojasiewicz1963propriete} and later extended in  \cite{bolte2007lojasiewicz, bolte2010characterizations}.
The KL properties  are powerful as they make it possible to  characterize sequence around critical points without convexity.  \emph{Second},  the key ideas to investigate the methods' convergence are mainly twofolds:  
\emph{i)} studying the convergence of primal and dual sequences by inspecting a tailored 
Lyapunov function; 
\emph{ii)} investigating the local convergence of the algorithm based on the KL properties. 
However, a typical and difficult problem  is that the augmented Lagrangian function generally 
oscillates in the non-convex context, which  makes the use and  design of Lyapunov functions particularly difficult \cite{bolte2018nonconvex}.

 \subsection{Our Contributions}
 Motivated by the recent progress on non-convex optimization, this paper seeks to investigate  decentralized optimization for a class of structured problems with \emph{(i)}  nonconvex global objective function (possibly nonsmooth) and \emph{(ii)} local bounded convex constraints and
 coupled nonlinear constraints w.r.t. the agents. 
 
In general, to develop a viable decentralized method, we need to overcome two main challenges.
\emph{First}, we need to realize the difficulties of calculating local optima of non-linear problems at each iteration as required by most existing decentralized paradigms \cite{shi2017penalty}.
\emph{Second}, we need to figure out the convergence conditions to achieve performance guarantee.  To address such issues, this paper proposes a  \emph{ Proximal Linearization-based  Decentralized Method}  (PLDM). The main ideas are twofolds.  \emph{First}, considering the difficulties to ensure the feasibility of the non-linear coupled constraints,  we first introduce some  consensus variables to eliminate the nonlinear couplings. \emph{Second},  to overcome the intrinsic  challenges to guarantee convergence as explained in \cite{sun2019two}, we solve a relaxed non-convex problems  in a decentralized manner by  combing the augmented Lagrangian-based framework, the \emph{alternating minimization} (i.e., i.e, Gauss-Seidel), and \emph{proximal linearization} \cite{bolte2014proximal, bolte2019proximal} to approximate the solutions of  the original problem.
In particular, different from the
 traditional MMs and  augmented Lagrangian-based methods where the exact local optima of the problems are required at each iteration, the proposed method leverages proximal linearization-based technique to update decision variables at each iteration,  which makes  it computationally efficient and viable for the nonconvex and nonlinear cases.  The main contributions of this paper are outlined, i.e.,

 \begin{itemize}
 	\item   We propose  a  PLDM  for a class of structured nonconvex problems subject to  coupled nonlinear constraints and  local bounded convex constraints.
 	
 	\item  The global convergence and local convergence rate  of the method to the $\epsilon$-critical points are studied by inspecting a tailored Lyapunov function and the Łojasiewicz property of the AL function.
 	\item  The performance of the decentralized method is illustrated by presenting a numeric example and an application to multi-zone heating, ventilation and air-conditioning (HVAC) control. 
 	 
 \end{itemize}
 
 The remainder of this paper is structured. 
 In Section II, the main notations and  the problem are presented.
 In Section III, the PLDM  is introduced.
 In Section IV, the global convergence and local  convergence rate  of the method are  investigated.
 In Section IV, the performance of the method is evaluated through a numeric example and an application.
 In Section V, we briefly conclude and discuss this paper.

\section{The Problem and Main Notations}

\subsection{Notations}
Throughout this paper, we use $\mathbb{N}$, $\mathbb{R}$,  $\mathbb{R}^n$ ($\mathbb{R}^n_{+}$),   and $\mathbb{R}^{m\times n}$ to  denote the spaces of integers,  reals, $n$-dimensional (positive) real vectors,  and $m\times n$-dimensional real matrices, respectively.
The superscript $T$ denotes the transpose operator. 
 $\bm{I}_n$  and $\bm{O}_n $ denote the $n$-dimensional identify and zero matrices. 
Without specification, $\Vert \cdot \Vert $ denotes $\ell_2$ norm.  $P_{\mathcal{X}} [\cdot]$ represents the projection operation on the set $\mathcal{X}$. We use $\nabla f=\big( \frac{\partial f}{ \partial x_1}, \frac{\partial f}{ \partial x_2}, \cdots, \frac{\partial f}{ \partial x_n}\big)^T$ to denote the gradients of $f: \mathbb{R}^n \rightarrow \mathbb{R}$ w.r.t its entities.
If $\bm{h}: \mathbb{R}^n \rightarrow \mathbb{R}^m$, i.e.,  $\bm{h}(\bm{x})=(h_1(\bm{x}), h_2(\bm{x}), \cdots, h_m(\bm{x}))^T$ with $h_i: \mathbb{R}^n \rightarrow \mathbb{R}$, we have $\nabla \bm{h}=\big( \nabla h_1, \nabla h_2, \cdots, \nabla h_m \big)^T$. 
$\prod_{i=1}^N \mathcal{X}_i$  denotes the Cartesian product of the sets $\mathcal{X}_i$ ($i=1, 2, \cdots, N$).  
The other  notations are  standard and follow the literature  \cite{shi2017penalty, zeng2018nonconvex}.

\subsection{The Problem}
This paper focuses on a class of problems given by
\begin{equation} \label{main problem}
\begin{split}
 \min_{\bm{x}_i, i=1, 2, \cdots, N} &\sum_{i=1}^N  f_i(\bm{x}_i)+\sum_{i=1}^N \phi_i (\{x_j\}_{j \in \mathcal{N}_i}) \\
 s. t. & \quad  h_i(\{x_j\}_{j \in \mathcal{N}_i})=0, \quad  ~\forall i \in \mathcal{N}. \\
 &\quad  \bm{x}_i \in \bm{\mathcal{X}}_i,  \quad \quad \quad \quad \quad~\forall i \in \mathcal{N}.
\end{split}
\end{equation}
where $i$ denotes the index of the agents from the set $\mathcal{N}=\{1, 2, \cdots, N\}$.  Here $\bm{x}_i \in \mathbb{R}^{n_i}$ denotes the local decision component of Agent  $i$.  $\mathcal{N}_i$  is alluded to the collection of agent $i$ and its neighbors.  

Note that problem  \eqref{main problem} has global objective function which is composed by the separable parts $f_i:\mathbb{R}^{n_i} \rightarrow  \mathbb{R}$ and the composite parts $\phi_i: \mathbb{R}^{\bar{n}_i} \rightarrow  \mathbb{R}$ ($\bar{n}_i=\sum_{i \in \mathcal{N}_i} n_i$) w.r.t. agents. Wherein the objective function $f_i$ and $\phi_i$ may be  nonconvex (possibly nonsmooth). 
 $h_i: \mathbb{R}^{ \bar{n}_i} \rightarrow  \mathbb{R}^{m_i}$ denotes the coupled nonlinear constraints pertaining to Agent $i$, which are  smooth and differentiable.  As inequality constraints could be transformed to equality constraints by introducing slack variables, this paper only investigate equality constraints.
Besides, there exist  local bounded convex constraints  represented by $\mathcal{X}_i$ ($i \in \mathcal{N} $) for the agents.  In addition, we make the following assumptions for problem \eqref{main problem} in our analysis, i.e., 

\begin{enumerate}
	\item[{\em{(A1)}}] The equality constraints $h_i$  are continuously differentiable  over $\mathcal{X}_i$ ($h_i$ and $\nabla h_i$ are Lipschitz continuous  with constants $L_{h_i}$ and $M_{h_i}$).
	\item[{\em{(A2)}}] The separable parts $f_i$ and $\nabla f_i$ are   Lipschitz continuous  with constants 	$L_{f_i}$ and $M_{f_i}$ over $\mathcal{X}_i$, respectively. 
	\item[{\em{(A3)}}] The coupled parts $\phi_i$  and $\nabla \phi_i$ are Lipschitz continuous  with constants $L_{\phi_i}$ and $M_{\phi_i}$ over $\prod_{j \in \mathcal{N}_i}\mathcal{X}_i$, respectively. 
\end{enumerate}

\subsection{Problem Reformulation}
As aforementioned, it's generally challenging to tackle problem \eqref{main problem} with non-linearity and non-convexity both in the objective and the constraints with performance guarantee. Therefore,  this part  presents  some reformulations of  the problem as a necessary preparation for the following study. 
First, to handle the coupled nonlinear couplings, we  introduce a block of consensus variables 
$\bm{Z}=((z_1)^T, (z_2)^T, \cdots, (z_N)^T)^T \in \mathbb{R}^{\sum_{i \in \mathcal{N}} n_i}$, which represents the hypothetical copy of  the collected decision components for all the agents. 
As displayed in Fig. \ref{Reformulated_variable},  each Agent $i$ will hold a local copy of the augmented decision variables denoted by  $\bm{X}_i\!=\!(x_i, \{x^j_i \}_{j \in \mathcal{N}_i \setminus \{i\}})^T \!\in\!\! \mathbb{R}^{\bar{n}_i}$ ($\bar{n}_i=\sum_{j \in \mathcal{N}_i } n_j$), in which  $x^j_i$ denotes the local copy of the decision component for its neighboring Agent $j$.  Meanwhile, we assume there is a virtual  Agent $0$, who is obligated to manage the block of consensus decision variables $\bm{Z}$. 
Intuitively, for an algorithm to converge, we require 
\begin{subequations}
 \begin{alignat}{2}
&x_i=z_i, ~~x^j_i=z_j. \label{eq:2a} \\
&x^i_j=z_i, ~~ x_j=z_j.  \label{eq:2b}
 \end{alignat}
\end{subequations}
\vspace{-0.4 in}
\begin{figure}[h] 
	\centering
	\includegraphics[width=2.2 in]{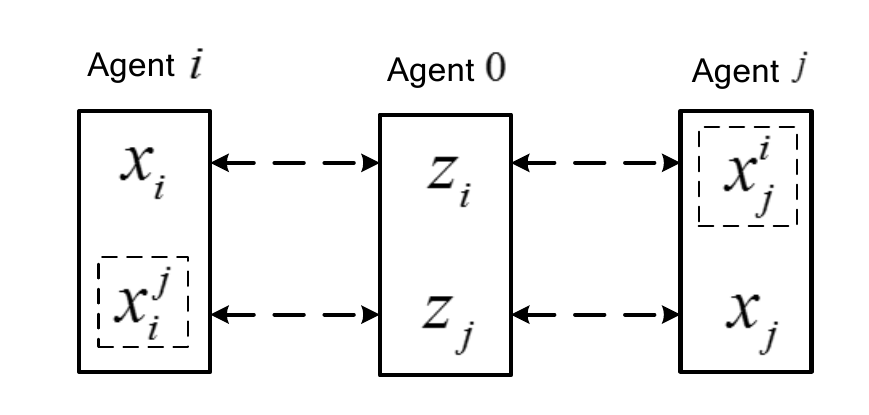}\\
	\caption{The local and global (consensus)  decision variables.} \label{Reformulated_variable} 
\end{figure} 

In this case,  problem (\ref{main problem}) can be restated as 
\begin{subequations}
	\begin{alignat}{4}
	\label{P3} \min_{\bm{X}_i, i=1, 2, \cdots, N.   \bm{Z}}&  \quad  \sum_{i=1}^N  \tilde{f}_i (\bm{X}_i)+\sum_{i=1}^N \phi_i  ( \bm{X}_i )  \tag{3}\\
	\label{eq:3a} s. t. \quad & h_i(\bm{X}_i)=\bm{0}, ~\forall i \in \mathcal{N}. \\
	\label{eq:3b} &\bm{X}_i\!=\! \bm{E}_i  \bm{Z}, ~~~\forall i \in \mathcal{N}. \\
 \label{eq:3c}	&\bm{Z} \in \bm{\mathcal{X}}. 
	\end{alignat}
\end{subequations}
where we have  $\bm{\mathcal{X}}=\prod_{i \in \mathcal{N}} \mathcal{X}_i$.
$\tilde{f}_i: \mathbb{R}^{\bar{n}_i} \rightarrow  \mathbb{R}$ denotes the extended function of $f_i$. 
$\bm{E}_i=diag\{\bm{I}_{n_j}\}_{j \in \mathcal{N}_i}
\in \mathbb{R}^{\bar{n}_i   \times \sum\limits_{j  \in \mathcal{N }} n_j }$
are constant matrices. \eqref{eq:3b} denotes the compact form of the equality constraints \eqref{eq:2a} and \eqref{eq:2b}.

Observe \eqref{P3}, we note that the objective function is now decomposable w.r.t. the agents.  However, there appear three types of constraints: {\em{(i)}} local  nonlinear and nonconvex constraints \eqref{eq:3a}, {\em{(ii)}} coupled linear constraints \eqref{eq:3b}, and {\em{(iii)}} local bounded convex constraints \eqref{eq:3c}. There exist two blocks of decision variables, i.e., the primal and consensus decision variables  $\bm{X}\!=\![\bm{X}^T_i]_{i \in \mathcal{N}}^T$ and $\bm{Z}$. 
Nevertheless, we note that the local constraints are now not regular as $[(\nabla_{\bm{X}_i} h_i (\bm{X}_i))^T, \bm{I}_{\bar{n}_i} ]^T$  is not full row rank.
In fact, this presents one of the intrinsic challenges to investigate the convergence of a general decentralized methods for such problems (this may be  comprehended in the rigorous analysis later). 
Here this may be understood that  even a particular (decentralized) algorithm is able to force $\bm{Z}$ to some (local) optima $\bm{Z}^*$,  no $\bm{X}^*$ that satisfies the constraints \eqref{eq:3a} and \eqref{eq:3c} simultaneously can be found.  To address such a challenging issue, we  introduce some slack  variables  $\bm{Y}_i \in \mathbb{R}^{\bar{n}_i}$ ($i \in \mathcal{N}$) and  restate problem \eqref{P3}  as
\begin{subequations}
	\begin{alignat}{4} 
	\label{P4} \min_{\bm{\bar{X}}_i, i \in \mathcal{N}.  \bm{Z}}&  \quad  \sum_{i=1}^N  \tilde{f}_i (\bm{X}_i)+\sum_{i=1}^N \phi_i  ( \bm{X}_i ) \tag{4}\\
	\label{eq:4a} s. t. \quad & h_i(\bm{X}_i)=\bm{0}, ~~~~\forall i \in \mathcal{N}. \\
	&\label{eq:4b} \bm{A}_i \bm{\bar{X}}_i=\! \bm{E}_i  \bm{Z}, ~~\forall i \in \mathcal{N}.  \\
	&\label{eq:4d} \bm{Y}_i=\bm{0},  ~~~~~~~~~~\forall i \in \mathcal{N}. \\
	&\label{eq:4c} \bm{Z} \in \bm{\mathcal{X}},  
	\end{alignat}
\end{subequations}
where $\bm{A}_i=[\bm{I}_{\bar{n}_i}, \bm{I}_{\bar{n}_i}] \in \mathbb{R}^{\bar{n}_i \times 2 \bar{n}_i }$. $\bm{\bar{X}}_i=\big( (\bm{X}_i)^T, (\bm{Y}_i)^T \big)^T$ ($i\in \mathcal{N}$) denotes the extended local decision variable  hold by  Agent $i$. 

Note that problem \eqref{P3} is equivalent to \eqref{P4} as the slack variables $\bm{Y}_i$ are forced to be \emph{zero}  in the constraints. 
However,  the local constraints w.r.t the extended local decision variables $\bm{\bar{X}}_i$ ($i \in \mathcal{N}$) are still not regular, therefore we have to resort to the following relaxed  problem: 
\begin{subequations}
	\begin{alignat}{4} 
\label{P5} \min_{\bm{\bar{X}}_i, i \in \mathcal{N}.  \bm{Z}}&  \quad  \sum_{i=1}^N  \tilde{f}_i (\bm{X}_i)+\sum_{i=1}^N \phi_i  ( \bm{X}_i )+\sum_{i=1}^N M_i \Vert \bm{Y}_i \Vert^2 \tag{5}\\
\label{eq:5a} s. t. \quad & h_i(\bm{X}_i)=\bm{0}, ~~~~\forall i \in \mathcal{N}. \\
&\label{eq:5b} \bm{A}_i \bm{\bar{X}}_i=\! \bm{E}_i  \bm{Z}, ~~\forall i \in \mathcal{N}.  \\
&\label{eq:5c} \bm{Z} \in \bm{\mathcal{X}},  
	\end{alignat}
\end{subequations}
where $M_i > 0$  ($i\in \mathcal{N}$) denotes some positive penalty parameter.  In contrast of   \eqref{P5} with \eqref{P4},  one may note that  the constraints $\bm{Y_i}=\bm{0}$ for the slack variables have been softened by appending some penalty terms  $M_i \Vert \bm{Y}_i \Vert^2$  in the global objective function.  As \emph{i)} problem \eqref{P5} contains all the feasible solutions of \eqref{P4}; \emph{ii)} $P^{(5), *} \leq P^{ (4), *}$ ($P^{(\cdot), *}$ denotes the optima of the problems),  problem \eqref{P5} can be regarded as an relaxation of problem \eqref{P4}. 
Moreover,  we note that with $M_i \rightarrow 0$  ($i\in \mathcal{N}$),  the well-posedness optimal solutions of \eqref{P5} (with bounded objective value) are exactly those for the original problem \eqref{P4} as we will have $\bm{Y}^*_i \rightarrow \bm{0}$ ($\forall i \in \mathcal{N}$) otherwise $P^{(5), *} \rightarrow \infty$.

The main ideas of this paper are twofolds.  \emph{First},  we  investigate decentralized method for problem \eqref{P5} with performance guarantee. 
\emph{Second},  we prove that the attained solutions are  the $\epsilon$-critical points of \eqref{P4} under some mild conditions. 
	Before that, we first make the following  extra assumptions.
\begin{itemize}
	\item [{\em{(A4)}}]  $M_i > 0$ ($i \in \mathcal{N}$) are sufficiently large.
\end{itemize}

\begin{remark}
  Generally, rigorous analysis requires  $M_i \rightarrow +\infty$ to guarantee the equivalence of problem \eqref{P4} and \eqref{P5}. However,  a sufficiently large positive value is enough to guarantee the sub-optimality in practice. 
\end{remark}

\begin{itemize}
	\item [{\em{(A5)}}]  $\bm{F}_i(\bm{\bar{X}}_i)
	\!\!=\!\!
	\left(  
	\begin{array}{ccc}
	\nabla_{\bm{X}_i} h_i(\bm{X}_i) &\!\!\!\!& \bm{O}_{n_i} \\
	&\bm{A}_i&\\
	\end{array} 
	\right)$
	 is uniformly regular with constant $\theta$   over the bounded set   $\bm{\mathcal{\bar{X}}}^{\eta}_i$ (see \textbf{Definition} 3),  where $\bm{\mathcal{\bar{X}}}_i^{\eta}=P_{\mathbb{R}^{\bar{n}_i}}[\bm{\mathcal{X}}^{\eta}] $, with  $\bm{\mathcal{X}}^{\eta}=\{\bar{\bm{X}} \in \mathbb{R}^{\sum_{i \in \mathcal{N}} 2 \overline{n}_i}  $ $|  \sum_{i=1}^N \Vert h_i(\bm{X}_i) \Vert \leq \eta \}$ ($\eta$ is a positive threshold).
	
\end{itemize}

\begin{remark}
(A5)  is standard  and can generally  be satisfied through regulating the dimension  of the slack variables. 
\end{remark}



 \section{Proximal Linearization-Based Decentralized Method}
This section first presents a decentralized method called PLDM  for problem \eqref{P5} and gives the main results on its convergence.
After that  we prove that the attained solutions are  the $\epsilon$-critical points of the original problem \eqref{P4}. In particular, the proposed method is established under the
general augmented Lagrangian-based framework for nonconvex context (see \cite{hours2014augmented, bolte2018nonconvex} for examples ) and contains the three standard steps: {\em{(i)}} {\em{primal update}} based on {\em{proximal linearization}}, {\em{(ii)}} {\em{dual update}} and {\em{(iii)}} {\em{adaptive step}} for penalty factors.


To handle the local nonlinear constraints \eqref{eq:4a} and the coupled linear constraints \eqref{eq:4b}, we define the Augmented Lagrangian (AL) function as
\begin{eqnarray} \label{Lagrangian function}
\begin{split}
\mathbb{L}_{\rho}& (\bm{\bar{X}}, \bm{\lambda}, \bm{\mu})=\sum_{i=1}^N \tilde{f}_i(\bm{X}_i) +\sum_{i=1}^N \phi_i (\bm{X}_i )+\sum_{i=1}^N M_i \Vert \bm{Y}_i\Vert^2 \\
&+\sum_{i=1}^N (\bm{\lambda}_i)^T  h_i(\bm{X}_i)+\sum_{i=1}^N \frac{\rho}{2} \big \Vert  h_i(\bm{X}_i)\big \Vert^2\\
&+\sum_{i=1}^N  \big( \bm{\mu}_i  \big)^T\!( \bm{A}_i \bm{\bar{X}}_i-\bm{E}_i  \bm{Z} )\!+\!\sum_{i=1}^N \! \frac{\rho}{2} \big \Vert  \bm{A}_i \bm{\bar{X}}_i-\bm{E}_i \bm{Z} \big \Vert^2
\end{split}
\end{eqnarray}
where $\bm{\bar{X}}=[\bm{\bar{X}}_i]_{i \in \mathcal{N}} \in \mathbb{R}^{\sum_{i \in \mathcal{N} }2\bar{n}_i}$ denotes the augmented decision variable for the problem.  $\bm{\lambda}=[\bm{\lambda}_i]_{i \in \mathcal{N}} \in \mathbb{R}^{\sum_{i \in \mathcal{N}} m_i}$ and $\bm{\mu}=[\bm{\mu}_i]_{i \in \mathcal{N}}  \in \mathbb{R}^{\sum_{i \in \mathcal{N}} 2 \bar{n}_i } $ are Lagrangian multipliers. $\rho>0$ denotes the penalty factor.

For given Lagrangian multipliers $\bm{\lambda}$ and $\bm{\mu}$,  the primal problem needs to be solved is  given by
\begin{equation} \label{eq:pp}
\begin{split}
\min_{\bm{\bar{X}}, \bm{Z}} ~~&\mathbb{L}_{\rho} (\bm{\bar{X}},  \bm{\lambda}, \bm{\mu}) \\
 s. t. ~~&\bm{Z}  \in  \bm{\mathcal{X}}. \\
\end{split}
\end{equation}

{\em{Primal Update}}: observe that \eqref{eq:pp} has two blocks of decision variables (i.e.,$ \bm{\bar{X}}$ and $\bm{Z}$). For general MMs, a joint optimization of  the two decision variable blocks is usually required \cite{dhingra2018proximal}. However,  the primal  problem \eqref{eq:pp} is  nonlinear and nonconvex, even obtaining a local optima is difficult with existing approaches. To handle this, we achieve {\em{primal update}} at each iteration in a decentralized manner by performing two steps : {\em{(i)}}  \emph{proximal linearization} of the nonlinear subproblems and {\em{(ii)}} Gauss-seidal update w.r.t. the decision variable blocks (i.e., $\bm{\bar{X}}$, $\bm{Z}$). Observing that AL function (\ref{Lagrangian function})  is decomposable w.r.t.  the agents with the extended local decision variable $\bm{\bar{X}}_i$ ($\forall i \in \mathcal{N}$), we define the subproblems for each Agent $i$ as 
{\small
\begin{equation} \label{local obj for node i}
\begin{split}
& \underset{\bm{\bar{X}}_i}{\operatorname{min}} \quad \mathbb{L}^i_{\rho} (\bm{\bar{X}}_i, \bm{Z}, \bm{\lambda}_i, \bm{\mu}_i)=\tilde{f}_i(\bm{X}_i)+\phi_i (\bm{X}_i)+M_i \Vert \bm{Y}_i\Vert^2\\
&\quad +(\bm{\lambda_i})^T h_i(\bm{X}_i)+\frac{\rho}{2} \big\Vert  h_i(\bm{X}_i)\big\Vert^2+ \big(\bm{\mu}_i\big)^T ( \bm{A}_i \bm{\bar{X}}_i-\bm{E}_i \bm{Z})\\
&\quad + \frac{\rho}{2}\big \Vert \bm{A}_i \bm{\bar{X}}_i- \bm{E}_i \bm{Z} \big \Vert^2, ~~\forall i \in \mathcal{N}. 
\end{split}
\end{equation} }
\normalsize

By performing {\emph{proximal linearization}} on the nonlinear and nonconvex subproblems \eqref{local obj for node i} at each iteration $k$, we have
\small
\begin{displaymath} 
\begin{split}
&\min_{\bm{\bar{X}}_i} \mathbb{\tilde{L}}^i_{\rho_{k}} (\bm{\bar{X}}_i, \bm{\bar{X}}^k_i, \bm{Z}^k, \bm{\lambda}^k_i, \bm{\mu}_i^{k})= (\bm{\mu}^{k}_i)^T ( \bm{A}_i \bm{\bar{X}}_i-\bm{E}_i \bm{Z}^{k+1})\\
&+\! \frac{\rho_{k}}{2} \Vert \bm{A}_i \bm{\bar{X}}_i\!-\!\bm{E}_i \bm{Z}^{k+1}\Vert^2\!+\!\langle \nabla_{\bm{X}_i} g_i (\bm{\bar{X}}^k_i, \bm{\lambda}_i^k, \rho_{k}), \bm{\bar{X}}_i\!-\!\bm{\bar{X}}^k_i \rangle\\
&+\frac{c^k_i}{2} \Vert  \bm{\bar{X}}_i-\bm{\bar{X}}^k_i\Vert^2 , ~~\forall i \in \mathcal{N}.
\end{split}
\end{displaymath}	
\normalsize
where $c^k_i$ denotes the step-size for subproblem $i$ at iteration $k$. Besides, we define
$g_i(\bm{\bar{X}}_i, \bm{\lambda}_i, \rho)\!=\!\tilde{f}_i(\bm{X}_i)\!+\!\phi_i(\bm{X}_i)\!+\!(\bm{\lambda_i})^Th_i(\bm{X}_i)+\frac{\rho}{2} \Vert h_i(\bm{X}_i)\Vert^2+M_i\Vert \bm{Y}_i \Vert^2$.

\begin{remark}
	If the objective function $\tilde{f}_i$ or $\phi_i$ is non-smooth w.r.t the local decision variable $\bm{\bar{X}}_i$, we can remove it from $g_i(\bm{\bar{X}}^k_i, \bm{\lambda}^k_i, \rho_{k})$ and keep them in $\mathbb{\tilde{L}}^i_{\rho_{k}} (\bm{\bar{X}}_i, \bm{\bar{X}}^k_i, \bm{Z}^k, \bm{\lambda}^k_i, \bm{\mu}^k_i)$.
\end{remark}

For the consensus variable $\bm{Z}$, the subproblem is given by
\begin{equation} \label{eq:subproblem8}
\begin{split}
&\underset{\bm{Z}}{\operatorname{min}} \quad \mathbb{L}_{\rho}(\bm{Z}, \bm{\bar{X}}, \bm{\mu})\!=\!\sum_{i=1}^N \big( \bm{\mu}_i \big)^T \big(\bm{A}_i \bm{\bar{X}}_i\!-\! \bm{E}_i \bm{Z}\big) \\
&\quad +\sum_{i=1}^N \frac{\rho }{2} \big \Vert \bm{A}_i\bm{\bar{X}}_i-\!\bm{E}_i \bm{Z} \big \Vert^2, ~~s. t. ~~\bm{Z}  \in  ~{\bm{\mathcal{X}}}. 
\end{split}
\end{equation}
We note that  subproblem \eqref{eq:subproblem8} is a quadratic programming (QP),  which can be solved efficiently.

{\em{Dual update \& Adaptive step}} : the dual variables  are updated following the standard augmented Lagrangian methods~(see \cite{chatzipanagiotis2015augmented} for example). However,  we introduce an {\em{adaptive step}} to dynamically update the penalty factor $\rho$.  In particular, we pre-define a sub-feasible region  regarding  the non-linear constraints of problem \eqref{P5}, i.e., 
$$\bm{\mathcal{\bar{X}}}^{\eta} \triangleq \big\{ \bar{\bm{X}} \in \mathbb{R}^{\sum_{i \in \mathcal{N}} 2 \overline{n}_i} |  \sum_{i=1}^N \Vert h_i(\bm{X}_i) \Vert \leq \eta  \big\}. $$
We iteratively increase $\rho_k$ with an increment $\delta$ until $\bm{\bar{X}}^k$ is forced into the pre-defined sub-feasible region $\bm{\mathcal{\bar{X}}}^{\eta}$.

The main steps of the proposed PLDM method for solving problem \eqref{P5} are summarized in \textbf{Algorithm} \ref{CADMM}. The algorithm starts by initializing the Lagrangian multipliers, penalty factor, and decision variables.  Afterwards, the main steps include the alternative update of the two decision  blocks ($\bm{\bar{X}}$, $\bm{Z}$) ({Step} 3-5), the Lagragnain multipliers ($\bm{\lambda}$, $\bm{\mu}$) (Step 6), and  the penalty factor ($\rho$) (Step 7). We note that as the primal problem \eqref{eq:pp} is  decomposable  w.r.t  the agents,   the update of the primal decision variable block ($\bm{\bar{X}}$) can be performed in parallel by the agents.  
In \textbf{Algorithm} \ref{CADMM},  the stopping criterion is defined as the residual error  bound of the constraints, i.e.,  
\begin{displaymath}
\begin{split}
R(k)=\sum_{i=1}^N \Big\{\Vert h_i(\bm{X}^{k+1}_i)\Vert+\Vert \bm{A}_i \bm{\bar{X}}^{k+1}_i-\bm{E}_i  \bm{Z}^{k+1} \Vert \Big\} \leq \epsilon
\end{split}
\end{displaymath}
where $\epsilon$ is a constant threshold. 

The algorithm iterates until the stopping criterion is reached. Still, this does not mean  the  convergence of the algorithm. This  needs to be studied in greater detail later.


\begin{algorithm}
	\caption{Proximal Linearization-based Decentralized Method (PLDM) for Nonconvex and Nonlinear Problems} \label{CADMM}
	\begin{algorithmic}[1]
		\State  \textbf{Initialization:}  $ \bm{\lambda}^0$, $\bm{\mu}^{0}$,  $\bm{\bar{X}}^0$,  $\bm{Z}^0$ and $\rho_0$,   and set $k \rightarrow 0$.
		\State \textbf{Repeat:}
		\State \textbf{{\em{Primal Update}}}: \\
		\quad {Update the consensus variables $\bm{Z}$, i.e., }
		\begin{equation} \label{(6)}
		\begin{split}
		\bm{Z}^{k+1}=\arg \min_{ \bm{Z} \in \mathcal{X} }  \mathbb{L}_{\rho_{k}}(\bm{Z}, \bm{\bar{X}}^k, \bm{\mu}^k).
		\end{split}
		\end{equation}
		\State \quad {Update the primal decision variables $\bm{\bar{X}}$  in parallel, i.e.,}
		\begin{equation} \label{(7)}
		\begin{split}
		\bm{\bar{X}}_i^{k+1} \!=\!\arg \min_{\bm{\bar{X}}_i} \mathbb{\tilde{L}}^i_{\rho_{k}} (\bm{\bar{X}}_i, &\bm{\bar{X}}^k_i, \bm{Z}^{k+1}, \bm{\lambda}^k_i, \bm{\mu}^{k}_i), \\
		&  \forall i \in \mathcal{N}.\\
		\end{split}
		\end{equation}	
		 \State \textbf{Dual update:}~{Update Lagrangian multipliers $\bm{\lambda}$, $\bm{\mu}$, i.e.,}
		\begin{equation}
		\begin{split}
		&\bm{\lambda}_i^{k+1}=\bm{\lambda}_i^k +\rho_{k}  h_i(\bm{X}^{k+1}_i),\\
		&\bm{\mu}^{ k+1}_i=\bm{\mu}^{k}_i+\rho_{k} \big( \bm{A}_i \bm{\bar{X}}_i^{k+1}\!-\!\bm{E}_i \bm{Z}^{k+1} \big), \\
		&\quad \quad \quad \quad \quad  \forall i \in \mathcal{N}.
		\end{split}
		\end{equation}
   \State \textbf{Adaptive step:}~{Update the penalty factor $\rho$}, i.e.,  
   if  $\bm{\bar{X}}^{k+1} \in \bm{\mathcal{X}}^{\eta}$, 
 set $\rho_{k+1}= \rho_{k}+\delta$, otherwise $\rho_{k+1}=\rho_{k}$. 
		\State If $R(k)\!\leq\!\epsilon$ stop, otherwise set $k\!=\!k\!+\!1$ and go to \textbf{Step} 2. 
	\end{algorithmic}
\end{algorithm}

\section{Performance and Convergence Analysis of PLDM}
This section discusses the performance and convergence of PLDM. First, we present the additional assumptions, basic definitions and propositions required. Then we illustrate the main results in \textbf{Theorem} 1 and \textbf{Theorem} 2. 


\subsection{Definitions and Lemmas}

\begin{lemma} \label{lemma1}
	(\textbf{Descent lemma}) (see \cite{bertsekas1997nonlinear}, \textbf{Proposition} A.24) Let $h:\mathbb{R}^d\!\rightarrow\!\mathbb{R}$ be  $M_{h}$-Lipschitz gradient continuous, we have
	\begin{displaymath}
	\begin{split}
	h(u) \leq h(v)\!+\!\langle u-v, \nabla h(u) \rangle\!+\!&\frac{M_h}{2} \Vert u-v\Vert^2, ~\forall u, v \in \mathbb{R}^d.
	\end{split}
	\end{displaymath}
	
\end{lemma}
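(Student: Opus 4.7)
The plan is to prove the descent inequality via the fundamental theorem of calculus along the segment joining $v$ to $u$, then exploit the $M_h$-Lipschitz continuity of $\nabla h$ to bound the remainder. This is a classical argument (attributed to Bertsekas), so the sketch is short and there is no real obstacle; I just need to be careful about which endpoint the gradient is evaluated at, since the statement in the paper uses $\nabla h(u)$ rather than the more common $\nabla h(v)$.

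First, define $\varphi:[0,1]\to\mathbb{R}$ by $\varphi(t) = h(v + t(u-v))$. Since $h$ is continuously differentiable (implied by the Lipschitz-gradient hypothesis), $\varphi$ is absolutely continuous with $\varphi'(t) = \langle \nabla h(v+t(u-v)),\, u-v\rangle$, so the fundamental theorem of calculus gives
\begin{equation*}
h(u) - h(v) = \int_0^1 \big\langle \nabla h(v+t(u-v)),\, u-v\big\rangle\, dt.
\end{equation*}

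Next, I would add and subtract $\langle \nabla h(u), u-v\rangle$ inside the integrand to isolate the target linear term:
\begin{equation*}
h(u)-h(v) = \langle \nabla h(u), u-v\rangle + \int_0^1 \big\langle \nabla h(v+t(u-v)) - \nabla h(u),\, u-v\big\rangle\, dt.
\end{equation*}
Applying Cauchy--Schwarz to the integrand, followed by the $M_h$-Lipschitz bound $\|\nabla h(v+t(u-v)) - \nabla h(u)\| \le M_h\|v+t(u-v)-u\| = M_h(1-t)\|u-v\|$, the remainder is controlled by
\begin{equation*}
\int_0^1 M_h(1-t)\|u-v\|^2\, dt = \tfrac{M_h}{2}\|u-v\|^2.
\end{equation*}
Combining these two displays yields the claimed inequality.

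The only subtlety worth flagging is the sign convention in the Lipschitz step: anchoring at $\nabla h(u)$ (as the statement requires) forces the factor $\|v+t(u-v)-u\| = (1-t)\|u-v\|$, whose integral on $[0,1]$ is $\tfrac12$; had the paper stated the more usual form with $\nabla h(v)$, the factor would have been $t\|u-v\|$, with the same integral. Either way the constant $M_h/2$ comes out correctly, and no additional structural hypothesis on $h$ is needed beyond the stated Lipschitz-gradient condition.
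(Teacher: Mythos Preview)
Your argument is correct and is precisely the standard proof of the descent lemma: integrate along the segment, subtract the anchored gradient term, and bound the remainder via Cauchy--Schwarz and the Lipschitz hypothesis; your care in anchoring at $\nabla h(u)$ rather than $\nabla h(v)$ matches the form stated. The paper itself does not supply a proof of this lemma---it simply cites it as Proposition~A.24 of \cite{bertsekas1997nonlinear}---so there is no in-paper argument to compare against.
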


\begin{lemma} \label{lemma2}
	(\textbf{Sufficient decrease property}) (see \cite{bolte2014proximal}, \textbf{Lemma} 2) Let $h:\mathbb{R}^d \!\rightarrow \!\mathbb{R}$ be $M_h$-Lipschitz gradient continuous and  $\sigma: \mathbb{R}^d \rightarrow \mathbb{R}$ be a proper and lower semicontinuous function with $\inf_{\mathbb{R}^d} \sigma>-\infty$.   If $prox_t^{\sigma}=\arg \min \{ \sigma(u)+\frac{t}{2} \Vert u-x\Vert^2 \}$ denotes the proximal map associated with $\sigma$, then for any  fixed $t>M_h$, $u \in $ dom $\sigma$, and  $u^+$ defined by
	\begin{equation}
	u^+ \in  {prox}^{\sigma}_t \Big ( u-\frac{1}{t} \nabla h(u) \Big)
	\end{equation}
	we have 
	$h(u^+)+\sigma(u^+) \leq h(u)+\sigma(u)-\frac{1}{2} (t-M_h) \big \Vert u^+-u \big \Vert^2$.
\end{lemma}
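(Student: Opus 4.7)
The plan is to combine the descent lemma (Lemma 1) with the defining variational inequality of the proximal map. The key observation is that $u^+$ minimizes $\sigma(\cdot) + \frac{t}{2} \Vert \cdot - (u - \frac{1}{t}\nabla h(u))\Vert^2$, so comparing its objective value against that attained at $u$ itself produces a useful inequality involving $\sigma$, while Lemma 1 separately controls the increase of $h$ in terms of $\Vert u^+ - u\Vert^2$ and the inner product $\langle u^+ - u, \nabla h(u)\rangle$. Adding these two inequalities causes the cross term $\langle u^+ - u, \nabla h(u)\rangle$ to cancel, and what remains are only the two quadratic terms in $\Vert u^+ - u\Vert^2$, whose combination gives the claimed coefficient $-\frac{1}{2}(t - M_h)$.

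More concretely, I would first apply Lemma 1 to $h$ to obtain
\begin{equation*}
h(u^+) \leq h(u) + \langle u^+ - u, \nabla h(u)\rangle + \frac{M_h}{2}\Vert u^+ - u\Vert^2 .
\end{equation*}
Next, using the definition of the proximal map, I would write down the inequality $\sigma(u^+) + \frac{t}{2}\Vert u^+ - (u - \frac{1}{t}\nabla h(u))\Vert^2 \leq \sigma(u) + \frac{t}{2}\Vert u - (u - \frac{1}{t}\nabla h(u))\Vert^2$, expand both squared norms, and cancel the common term $\frac{1}{2t}\Vert \nabla h(u)\Vert^2$ to obtain
\begin{equation*}
\sigma(u^+) \leq \sigma(u) - \langle u^+ - u, \nabla h(u)\rangle - \frac{t}{2}\Vert u^+ - u\Vert^2 .
\end{equation*}
Finally, adding the two displayed inequalities collapses the inner product terms and yields the advertised bound.

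The result is essentially routine once the two inequalities are written down correctly; there is no genuine obstacle because the proper/lower-semicontinuous hypothesis together with $\inf \sigma > -\infty$ only ensures the proximal map is nonempty and the comparison inequality is valid, and the $t > M_h$ assumption is only needed at the very last step to make $-\frac{1}{2}(t - M_h)$ the correct (nonpositive) coefficient guaranteeing actual decrease. The most delicate point to get right is the algebraic expansion of the two squared norms in the proximal inequality, since an arithmetic slip there will misplace the $\langle u^+ - u, \nabla h(u)\rangle$ sign and prevent the cancellation.
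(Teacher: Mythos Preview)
Your proof is correct and is exactly the standard argument for this result. The paper itself does not supply a proof of Lemma~\ref{lemma2}; it simply cites \cite{bolte2014proximal}, Lemma~2, so there is no in-paper proof to compare against, and your derivation matches the approach in that reference.
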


\begin{definition} \label{Def1}
	(\textbf{Normal cone}) (see \cite{hours2014augmented}) Let $\mathcal{\bm{X}}\!\!\subseteq\!\!\mathbb{R}^d$ be a convex set, the normal cone of $\mathcal{\bm{X}}$ is the set-valued mapping
	\begin{displaymath}
	\begin{split}
	\mathcal{N}_{\mathcal{\bm{X}}}(\bar{x}) =\left \{
	\begin{array}{lcl} \big\{  g \in \mathbb{R}^d | \forall x\in\mathcal{\bm{X}}, g^T(x-\bar{x}) \leq 0 \big\},~~ &\textrm{if}~\bar{x} \in \mathcal{\bm{X}}~~\\
	\varnothing, &\textrm{if}~\bar{x} \notin \mathcal{\bm{X}}.~~
	\end{array}  
	\right.
	\end{split}
	\end{displaymath}
	
\end{definition}

\begin{definition}  \label{Def2}
	(\textbf{Critical point}) (see \cite{boggs1995sequential, kanzow2018augmented, bot2018proximal}) Considering the following  problem (\textbf{P}):
	\begin{equation*}
	\begin{split}
	&(\textbf{\textrm{P}})~~\Big \{\min_{x \in \mathbb{R}^d}  f(x)~|~\bm{g}(x) \leq \bm{0},   \bm{h} (x)=\bm{0}. \Big\}
	\end{split}
	\end{equation*}
	where  the objective $f: \mathbb{R}^d \rightarrow \mathbb{R}$, and the constraints $\bm{g}=(g_1, g_2, \cdots, g_m) $ with $ g_i: \mathbb{R}^d \rightarrow \mathbb{R}$, $\bm{h}=(h_1, h_2, \cdots, h_\ell) $  with
	$h_i: \mathbb{R}^d \rightarrow \mathbb{R}$ are continuously differentiable. 
	The critical points (i.e., KKT points) of  problem (\textbf{P}) denote its feasible points satisfying first-order optimality condition described by  
	{\small{ 
	\begin{equation*} \label{critical points}
	\begin{split}
	&\textrm{crit} ( \bm{P})\!=\!\left\{
	\begin{array}{c}
	x \in \mathbb{R}^d \\
	\bm{\lambda} \in \mathbb{R}_{+}^m\\
	\bm{\mu} \in \mathbb{R}^{\ell}\\
	\end{array}
	\Bigg |
		\begin{array}{c}
	\nabla f(x)\!+\! (\nabla \bm{g}(x))^T\bm{\lambda} \!+\!(\nabla h(x))^T\bm{\mu} \!=\!\bm{0}. \\
	\bm{g}(x) \geq 0, ~	\bm{h}(x)=0. \\
	\bm{\lambda}_i g_i(x)=0, ~i=1, 2, \cdots, m.\\
	\end{array}
	\right\} \\
	 \end{split}
	 \end{equation*}	}}
	 As a direct extension, we can define the collection of  $\epsilon$-critical points   $\textrm{crit}_{\epsilon} ( \bm{P})$ by replacing $\bm{0}$ on the right-hand side of the equalities (inequalities) with $\epsilon$.

\end{definition}

\begin{remark}
	For convex problems, the critical points (KKT points) are exactly the global optima. However, without convexity, 
	a critical  point can be a global optima, a  local optima, or  a ``saddle point".  For problem \eqref{P5} discussed in this paper, the 
	critical points can be described by
	{\small{
	\begin{equation*}
	\begin{split}
	&\textrm{crit \big(~problem (5) \big)}= \\
	&\left\{   
	\begin{array}{c}
	\bm{\bar{X}} \in \mathbb{R}^{\sum_{i \in \mathcal{N}} \bar{n}_i} \\
	\bm{\lambda} \in \mathbb{R}^{\sum_{i \in \mathcal{N}} m_i} \\
   \bm{\mu} \in \mathbb{R}^{\sum_{i \in \mathcal{N}} 2 \bar{n}_i} \\
	\end{array}
	 \Bigg |
	\begin{array} {c}
		\nabla_{\bm{\bar{X}}} \mathbb{L}_{\rho} (\bm{\bar{X}}, \bm{Z}, \bm{\lambda}, \bm{\mu})=\bm{0}.  \\
		\nabla_{\bm{\bar{X}}} \mathbb{L}_{\rho} (\bm{\bar{X}}, \bm{Z}, \bm{\lambda}, \bm{\mu})+\mathcal{N}_{\mathcal{\bm{X}}}(\bm{Z})=\bm{0}.  \\
		\bm{h}_i(\bm{\bar{X}_i})=\bm{0}, ~\forall i \in \mathcal{N}. \\
		\bm{A}_i \bm{\bar{X}}_i-\bm{E}_i\bm{Z}=0, ~\forall i \in \mathcal{N}.
		\end{array}
		\right\}
		\end{split}
	\end{equation*}
}}
\end{remark}

\begin{definition} \label{Def3}
	(\textbf{Uniform Regularity}) (see \cite{bolte2018nonconvex}) Let $\mathcal{\bm{X}} \subseteq  \mathbb{R}^m$ and  $h:\mathbb{R}^m \rightarrow \mathbb{R}^n$ be  continuously differentiable, we claim $h$ as uniformly regular over $\mathcal{\bm{X}}$ with a positive constant $\theta$ if 
	\begin{equation}
	\begin{split}
	\big \Vert (\nabla h (x) )^T \bm{v}\big \Vert \geq \theta \Vert \bm{v} \Vert, \forall x \in \mathcal{\bm{X}}, \bm{v} \in \mathbb{R}^n
	\end{split}
	\end{equation}
\end{definition}

\begin{remark}
	For a given $x \in \mathcal{X}$,  asserting that $\nabla h(x)$ is uniformly regular with $\theta$ ($\theta>0$) is equivalent to $$  \gamma(F, x)=\min_{\Vert \bm{v} \Vert=1} \Big\{ \Vert (\nabla h(x))^T \bm{v}  \Vert  \Big\} > 0.$$
	
	Equivalently,  the mapping $(\nabla h(x))^T$ is supposed to be surjective or $\nabla h(x) (\nabla h(x))^T$ is positive definite. 
	one may note that $\nabla h(x) (\nabla h(x))^T$ is always positive semidefinite. Therefore, the uniform regularity of $\nabla h(x)$ requires that the minimum eigenvalue of $\nabla h(x) (\nabla h(x))^T$ is positive.
   Besides, we note that if $\nabla h(x)$ has full row rank,  $\nabla h(x)$ will be uniformly regular with an existing positive constant.
\end{remark}

The next definition is a property that many analytical  functions hold and plays a central role in the convergence analysis of  nonconvex optimization (see \cite{bolte2014proximal, li2015accelerated}). 

\begin{definition} \label{Def4}
	(\textbf{Kurdyka-Łojasiewicz} (KŁ) property) (see \cite{bolte2014proximal}) We say function $h: \mathbb{R}^p \rightarrow \mathbb{R} \cup \{+\infty\}$  has the KŁ property 
	at $x^* \in \textrm{dom} ~\partial h$, if there exist $\eta \in (0, +\infty)$, a neighborhood 
	$U$ of $x^*$, and a continuous concave function $\varphi: [0, \eta) \rightarrow \mathbb{R}_+$ such that:
	
	\noindent
	(i) $\varphi(0)=0$ and $\varphi$ is differentible on $(0, \varphi)$;
	
	\noindent
	(ii) $\forall s\in (0, \eta)$, $\varphi^{'} (s)>0$;
	
	\noindent
	(iii) $\forall s \in U \cap \big\{x: h(x^*) < h(x) < h(x^*)+\eta \big\}$. Then the following KŁ inequality holds:
	$$  \varphi^{'} \big( h(x)-h(x^{*})\big)\cdot\textrm{dist} \big( 0, \partial h(x)\big) \geq 1,$$ 
	and function $\varphi$  is called  a desingularizing function of  $h$ at $x^*$. 
	
\end{definition}

For \textbf{Algorithm} \ref{CADMM}, we need two additional assumptions to guarantee convergence. 
\begin{itemize}
	\item [{\em{(A6)}}]The  Lagrangian multipliers sequences  $\{ \bm{\lambda}_i^k\}_{k \in \mathbb{N}}$  ($i \in \mathcal{N}$) and $\{\bm{\bm{\mu}_i^k} \}_{k \in \mathbb{N}}$ ($i \in \mathcal{N}$)  generated by \textbf{Algorithm} \ref{CADMM} are bounded by $M_{\bm{\lambda}_i}$ and $M_{\bm{\mu}_i}$, respectively. 
	\item [{\em{(A7)}}] The original problem (\ref{main problem}) is well-defined. Thus for a proper penalty factor $\rho<+\infty$, the primal problem in \eqref{eq:pp} is lower bounded as
	\begin{equation}
	\begin{split}
	\inf_{\bm{\bar{X}}, \bm{Z} \in \bm{\mathcal{X}}} \mathbb{L}_{\rho} (\bm{\bar{X}}, \bm{Z}, \bm{\lambda}, \bm{\mu}) >-\infty.
	\end{split}
	\end{equation}
\end{itemize}


\subsection{PLDM Convergence}
In this subsection, we first illustrate \textbf{Propositions} 1-9 that required to prove the convergence of the PLDM.  After that  we present the main results  in \textbf{Theorem} 1.  

\begin{proposition} \label{prop2}
	Let $\bm{\mathcal{\bar{X}}}_i^{\eta}=P_{\mathbb{R}^{\bar{n}_i}}[\bm{\mathcal{\bar{X}}}^{\eta}]$ ($i \in \mathcal{N}$), 
	there exists a finite iteration $\underline{k}$ such that
	\begin{center}
		\begin{itemize}
			\item [{\em{(a)}}] $\{\bm{\bar{X}}^k_i \}_{k \in \mathbb{N}, k \geq  \underline{k}} \in \bm{\mathcal{\bar{X}}}^{\eta}_i~ (\forall i \in \mathcal{N}).$\\
			\item [{\em{(b)}}] $\rho_{k}=\rho_{\underline{k}}, ~~\forall k \geq \underline{k}.$
			
		\end{itemize}
	\end{center}
\end{proposition}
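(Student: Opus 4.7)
The plan is to establish both claims by showing that the adaptive penalty update in Algorithm~\ref{CADMM} fires only finitely often, after which $\rho_k$ freezes at some value $\rho_{\underline{k}}$ and all subsequent primal iterates are automatically driven into $\bm{\mathcal{\bar{X}}}^{\eta}$. Since the adaptive rule keeps $\rho$ unchanged precisely on those iterations where $\bm{\bar{X}}^{k+1} \in \bm{\mathcal{\bar{X}}}^{\eta}$, part (b) will follow as an immediate consequence of part (a), so the real task is to prove (a).

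The key quantitative step I would carry out is to produce a finite threshold $\rho^* > 0$ such that $\rho_k \geq \rho^*$ implies $\sum_{i=1}^N \Vert h_i(\bm{X}_i^{k+1}) \Vert \leq \eta$ for the iterate generated by (7). I would proceed by writing the first-order optimality condition of the proximal-linearized subproblem (7) and separating the contributions by their scaling in $\rho_k$. The leading $O(\rho_k)$ terms come from the penalty on the coupled linear constraint $\bm{A}_i \bm{\bar{X}}_i - \bm{E}_i \bm{Z}^{k+1}$ together with the $\rho_k$-weighted gradient of $\tfrac{1}{2}\Vert h_i(\bm{X}_i)\Vert^2$ hidden inside $\nabla_{\bm{X}_i} g_i(\bm{\bar{X}}_i^k, \bm{\lambda}_i^k, \rho_k)$. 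Combining these with the uniform regularity constant $\theta$ of $\bm{F}_i$ from (A5), while absorbing the $O(1)$ contributions from the bounded multipliers (A6), from the Lipschitz gradients of $\tilde f_i,\phi_i$ via (A2)--(A3), and from the proximal coefficients $c_i^k$, should yield a bound of the form $\sum_i \Vert h_i(\bm{X}_i^{k+1})\Vert \leq C/\rho_k$. Choosing $\rho^*$ large enough that $C/\rho^* \leq \eta$ then delivers the threshold.

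With $\rho^*$ in hand, a simple counting argument closes the proof. Starting from $\rho_0$, the penalty factor is incremented by $\delta$ only at iterations where the iterate fails to lie in $\bm{\mathcal{\bar{X}}}^{\eta}$, so after at most $\lceil(\rho^*-\rho_0)/\delta\rceil$ such iterations one has $\rho_k \geq \rho^*$. From that index onward the primal update yields iterates in $\bm{\mathcal{\bar{X}}}^{\eta}$, so the adaptive rule no longer triggers and $\rho_k$ remains frozen. Taking $\underline{k}$ to be this first index proves both (a) and (b) simultaneously.

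The hard part will be the quantitative estimate on $\Vert h_i(\bm{X}_i^{k+1})\Vert$ as a function of $\rho_k$. The difficulty is that the primal update is a proximal linearization rather than an exact AL minimization, so one cannot directly invoke standard penalty-method arguments that make the constraint violation of an exact minimizer vanish as $\rho \to \infty$. Handling this requires carefully accounting for the $\rho_k$-dependence hidden inside $\nabla_{\bm{X}_i} g_i(\bm{\bar{X}}_i^k, \bm{\lambda}_i^k, \rho_k)$ and exploiting the uniform regularity of $\bm{F}_i$ so that the effective linear system behind the first-order condition remains well-conditioned uniformly in $\rho_k$. Assumption (A4), which enforces sufficiently large $M_i$, together with boundedness of $\bm{\mathcal{X}}$, will be needed to confine the slack iterates $\bm{Y}_i^k$ and to keep the $O(1)$ contributions bounded independently of $\rho_k$.
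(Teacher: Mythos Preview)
Your overall structure is fine, but you are working much harder than necessary and in a way that may not close. The paper derives the $C/\rho_k$ bound you are after in one line from the \emph{dual} update rather than from the primal first-order conditions: since $\bm{\lambda}_i^{k+1}=\bm{\lambda}_i^{k}+\rho_{k}\,h_i(\bm{X}_i^{k+1})$, one has
\[
\sum_{i=1}^N\Vert h_i(\bm{X}_i^{k+1})\Vert
=\frac{1}{\rho_k}\sum_{i=1}^N\Vert \bm{\lambda}_i^{k+1}-\bm{\lambda}_i^{k}\Vert
\le \frac{\sum_{i=1}^N 2M_{\bm{\lambda}_i}}{\rho_k},
\]
using only the bounded-multiplier assumption (A6). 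The argument is then exactly your counting/contradiction step: if the sub-feasible region were never entered, $\rho_k\to\infty$ and the right-hand side tends to $0$, a contradiction; hence a finite $\underline{k}$ exists after which (a) and (b) hold. None of (A2)--(A5), the regularity constant $\theta$, or the primal optimality conditions are needed here.

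Your proposed route via the primal first-order conditions is not only longer but has a genuine circularity: you invoke the uniform regularity constant $\theta$ from (A5), yet (A5) is assumed only on $\bm{\mathcal{\bar X}}_i^{\eta}$, precisely the set you are trying to show the iterates enter. Before $\underline{k}$ you have no control on $\theta$, so the well-conditioning of the ``effective linear system'' you rely on is unavailable at the iterations where you need it. A second wrinkle is that the linearized gradient $\nabla_{\bm{X}_i}g_i(\bm{\bar X}_i^{k},\bm{\lambda}_i^{k},\rho_k)$ carries $\rho_k(\nabla h_i(\bm{X}_i^{k}))^{T}h_i(\bm{X}_i^{k})$ at the \emph{old} iterate, so the FOC of (7) bounds a mixture of $h_i(\bm{X}_i^{k})$ and the new linear residual rather than $h_i(\bm{X}_i^{k+1})$ directly; untangling this without a uniform step bound on $\Vert\bm{\bar X}_i^{k+1}-\bm{\bar X}_i^{k}\Vert$ is delicate. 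Switching to the dual-update identity sidesteps both issues entirely.
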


\noindent{\it{Proof:}} Refer to \textbf{Appendix} A. 

\begin{remark}
	\textbf{Proposition} 1(a) illustrates that for any pre-defined sub-feasible region for the non-linear constraints \eqref{eq:5a},  the local  decision variables ($\bm{\bar{X}}^k_i$) for the agents will be forced into the sub-feasible region ($\bm{\mathcal{\bar{X}}}_i^{\eta}$) within finite iterations  ($\underline{k}$).  Meanwhile, the penalty factor ($\rho$) will stop  increasing  as described  in \textbf{Proposition} 1(b).
\end{remark}
\vspace{0.1 in}
\begin{proposition} \label{prop1}
	 $g_i (\bm{\bar{X}}_i, \bm{\lambda}^k_i, \rho_{k})$ ($ \forall k \in \mathbb{N}$)  is  Lipschitz continuous w.r.t $\bm{\bar{X}}_i$  over $\bm{\mathcal{\bar{X}}}_i^{\eta}$ with constant  $L_{g_i}=\max\{ L_{f_i}+L_{\phi_i}+ M_{{\lambda}_i}  L_{h_i}+\rho_{\underline{k}} C_{h_i}, 2M_i\}$ ($ C_{h_i}$ denotes the upper bound of $ \Vert ( \nabla_{\bm{X}_i} \!h_i(\bm{X}_i))^T h_i(\bm{X}_i) \Vert$ over $\bm{\mathcal{\bar{X}}}^\eta_i$).
\end{proposition}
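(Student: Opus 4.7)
The plan is to establish Lipschitz continuity of $g_i$ on $\bm{\mathcal{\bar{X}}}_i^{\eta}$ by uniformly bounding the (partial) gradient of $g_i$ with respect to $\bm{\bar{X}}_i$ on that set, and then invoking the mean-value inequality on the convex domain $\bm{\mathcal{\bar{X}}}_i^{\eta}$.

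First I would split the gradient blockwise as
\[
\nabla_{\bm{X}_i} g_i = \nabla \tilde f_i(\bm{X}_i) + \nabla \phi_i(\bm{X}_i) + (\nabla h_i(\bm{X}_i))^{\!\top}\!\bm{\lambda}_i^{k} + \rho_k(\nabla h_i(\bm{X}_i))^{\!\top}\!h_i(\bm{X}_i),
\qquad \nabla_{\bm{Y}_i} g_i = 2M_i \bm{Y}_i,
\]
and bound each term using the assumptions already in force. By (A2) and (A3), $\|\nabla\tilde f_i\|\leq L_{f_i}$ and $\|\nabla\phi_i\|\leq L_{\phi_i}$; by (A1) together with the multiplier bound (A6), $\|(\nabla h_i)^{\!\top}\!\bm{\lambda}_i^{k}\|\leq L_{h_i} M_{\lambda_i}$. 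Since $\bm{\bar{X}}_i\in\bm{\mathcal{\bar{X}}}_i^{\eta}$ forces $\|h_i(\bm{X}_i)\|\leq\eta$, the quantity $\|(\nabla h_i)^{\!\top}\!h_i\|$ admits the uniform bound $C_{h_i}$ over this set (in fact $C_{h_i}\leq L_{h_i}\eta$ works as an admissible choice), and by Proposition~1(b), $\rho_k\leq\rho_{\underline{k}}$ for every $k\in\mathbb{N}$ (the penalty factor is nondecreasing and freezes after iteration $\underline{k}$). Therefore
\[
\|\nabla_{\bm{X}_i} g_i\|\leq L_{f_i}+L_{\phi_i}+M_{\lambda_i}L_{h_i}+\rho_{\underline{k}}C_{h_i}.
\]
For the slack block, on the bounded projection of $\bm{\mathcal{\bar{X}}}^{\eta}$ onto the $\bm{Y}_i$ coordinate, $\|\nabla_{\bm{Y}_i} g_i\|=2M_i\|\bm{Y}_i\|\leq 2M_i$ (after the implicit normalization of the slack variable).

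Taking the larger of the two partial-gradient bounds yields the stated constant $L_{g_i}=\max\{L_{f_i}+L_{\phi_i}+M_{\lambda_i}L_{h_i}+\rho_{\underline{k}}C_{h_i},\,2M_i\}$; applying the mean-value inequality on the convex set $\bm{\mathcal{\bar{X}}}_i^{\eta}$ then gives $|g_i(\bm{\bar{X}}_i')-g_i(\bm{\bar{X}}_i'')|\leq L_{g_i}\|\bm{\bar{X}}_i'-\bm{\bar{X}}_i''\|$, as required. The main obstacle I anticipate is in the slack block: the clean bound ``$\|\nabla_{\bm{Y}_i} g_i\|\leq 2M_i$'' relies on a normalization $\|\bm{Y}_i\|\leq 1$ on $\bm{\mathcal{\bar{X}}}_i^{\eta}$ that is not made explicit in (A1)--(A7); if this normalization is not inherited from the definition of the set, one must either absorb the slack-coordinate diameter into $L_{g_i}$ as an extra constant, or add a boundedness hypothesis on $\bm{Y}_i$ in the reformulated problem~\eqref{P5}. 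Once that point is fixed, the remaining argument is a routine triangle-inequality calculation combined with Proposition~1(a)--(b).
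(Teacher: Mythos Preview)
Your argument mirrors the paper's proof exactly: compute the two gradient blocks $\nabla_{\bm{X}_i}g_i$ and $\nabla_{\bm{Y}_i}g_i$, bound each piece using (A1)--(A3), (A6), and $\rho_k\le\rho_{\underline{k}}$ from Proposition~1(b), and take the maximum of the two resulting constants. The concern you flag about the slack block is legitimate---the paper's own proof writes $\nabla_{\bm{Y}_i}g_i=2M_i\bm{Y}_i$ and arrives at the same constant $2M_i$ without ever justifying $\|\bm{Y}_i\|\le 1$ on $\bm{\mathcal{\bar X}}_i^{\eta}$, so this is a gap inherited from the original rather than one introduced by you.
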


\noindent{\it{Proof:}} Refer to \textbf{Appendix} B. 
\vspace{0.1 cm}

\begin{remark}
	\textbf{Proposition} 2 presents the smoothness of the function $g_i(\bm{\bar{X}}_i, \bm{\lambda}^k_i, \rho_k)$ ($\forall k \in \mathbb{N}$) over the bounded sub-feasible region $\bm{\mathcal{\bar{X}}}_i^{\eta}$.  
\end{remark}
\vspace{0.1 in}







\begin{proposition} \label{prop4}
	Let $\{\bm{\bar{X}}^k\}_{k \in \mathbb{N}, k \geq \underline{k}}$ and $\{\bm{Z}^k\}_{k \in \mathbb{N}, k \geq \underline{k}}$ be the sequences generated by \textbf{Algorithm} \ref{CADMM}, then we have 
	\begin{equation} \label{(11)}
	\begin{split}
	& \mathbb{L}_{\rho_{\underline{k}}} (\bm{\bar{X}}^{k+1}, \bm{Z}^{k+1}, \bm{\lambda}^k, \bm{\mu}^k) \leq  \mathbb{L}_{\rho_{\underline{k}}} (\bm{\bar{X}}^{k}, \bm{Z}^{k}, \bm{\lambda}^k, \bm{\mu}^k)\\
	&\quad \quad -\sum_{i=1}^N \frac{1}{2} (c_i^k-L_{g_i}) \Vert \bm{\bar{X}}_i^{k+1}-\bm{\bar{X}}^k_i\Vert^2 
	\end{split}
	\end{equation}
\end{proposition}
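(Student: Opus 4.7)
The plan is to telescope the total change in $\mathbb{L}_{\rho_{\underline{k}}}$ across the two primal sub-steps of Algorithm~\ref{CADMM} executed at iteration $k \ge \underline{k}$: the $\bm{Z}$-update in \eqref{(6)} and the parallel $\bm{\bar{X}}$-update in \eqref{(7)}. Since by Proposition~\ref{prop2}(b) the penalty parameter is frozen at $\rho_{\underline{k}}$ for every $k \ge \underline{k}$, the same AL function is in force on both sides of the claimed inequality, which avoids any mismatch issue.

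For the first sub-step, I would observe that $\mathbb{L}_{\rho_{\underline{k}}}(\bm{\bar{X}}^k, \bm{Z}, \bm{\lambda}^k, \bm{\mu}^k)$ depends on $\bm{Z}$ only through the convex quadratic $\sum_i (\bm{\mu}_i^k)^T(\bm{A}_i \bm{\bar{X}}_i^k - \bm{E}_i \bm{Z}) + \tfrac{\rho_{\underline{k}}}{2}\Vert \bm{A}_i \bm{\bar{X}}_i^k - \bm{E}_i \bm{Z}\Vert^2$, which is minimized over the convex set $\bm{\mathcal{X}}$ to produce $\bm{Z}^{k+1}$. Optimality immediately gives
\begin{equation*}
\mathbb{L}_{\rho_{\underline{k}}}(\bm{\bar{X}}^{k}, \bm{Z}^{k+1}, \bm{\lambda}^k, \bm{\mu}^k) \le \mathbb{L}_{\rho_{\underline{k}}}(\bm{\bar{X}}^{k}, \bm{Z}^{k}, \bm{\lambda}^k, \bm{\mu}^k).
\end{equation*}

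For the second sub-step, I would recognize the agent-$i$ subproblem \eqref{(7)} as a proximal-gradient step on the splitting $\mathbb{L}_{\rho_{\underline{k}}}^i = g_i(\bm{\bar{X}}_i,\bm{\lambda}_i^k,\rho_{\underline{k}}) + \sigma_i^k(\bm{\bar{X}}_i)$, where $\sigma_i^k(\bm{\bar{X}}_i) := (\bm{\mu}_i^k)^T(\bm{A}_i\bm{\bar{X}}_i-\bm{E}_i\bm{Z}^{k+1}) + \tfrac{\rho_{\underline{k}}}{2}\Vert \bm{A}_i\bm{\bar{X}}_i-\bm{E}_i\bm{Z}^{k+1}\Vert^2$ is kept exactly (a proper, lower semicontinuous convex quadratic), while $g_i$ is linearized with proximal step $c_i^k$. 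By Proposition~\ref{prop2}(a) the iterates $\bm{\bar{X}}_i^k, \bm{\bar{X}}_i^{k+1}$ lie in $\bm{\mathcal{\bar{X}}}_i^{\eta}$, and by Proposition~\ref{prop1} the function $g_i$ is $L_{g_i}$-Lipschitz gradient continuous on that set. Invoking Lemma~\ref{lemma2} with $h = g_i$, $\sigma = \sigma_i^k$ and $t = c_i^k$ yields, for each $i$,
\begin{equation*}
\mathbb{L}_{\rho_{\underline{k}}}^i(\bm{\bar{X}}_i^{k+1}, \bm{Z}^{k+1}, \bm{\lambda}_i^k, \bm{\mu}_i^k) \le \mathbb{L}_{\rho_{\underline{k}}}^i(\bm{\bar{X}}_i^{k}, \bm{Z}^{k+1}, \bm{\lambda}_i^k, \bm{\mu}_i^k) - \tfrac{1}{2}(c_i^k-L_{g_i})\Vert \bm{\bar{X}}_i^{k+1}-\bm{\bar{X}}_i^k\Vert^2.
\end{equation*}
Summing over $i \in \mathcal{N}$ and chaining with the $\bm{Z}$-step inequality gives the claim.

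The main subtlety is identifying the correct smooth/non-smooth split when applying Lemma~\ref{lemma2}: one must keep the quadratic coupling term $\sigma_i^k$ exact (since only $g_i$ is linearized in \eqref{(7)}), and then verify that $g_i$'s Lipschitz-gradient constant is the same $L_{g_i}$ certified by Proposition~\ref{prop1} on the post-$\underline{k}$ trajectory. Everything else is bookkeeping, provided the step-sizes satisfy $c_i^k > L_{g_i}$ so that the descent coefficient $\tfrac{1}{2}(c_i^k - L_{g_i})$ is positive; I would flag this as a standing assumption on $\{c_i^k\}$ used throughout Algorithm~\ref{CADMM}'s analysis.
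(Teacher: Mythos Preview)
Your proposal is correct and follows essentially the same route as the paper's proof: the $\bm{Z}$-step descent by optimality of \eqref{(6)} matches the paper's inequality \eqref{(12)}, and your identification of \eqref{(7)} as a proximal-gradient step on the split $g_i + \sigma_i^k$ followed by an application of Lemma~\ref{lemma2} reproduces the paper's inequality \eqref{(14)}; summing and chaining is exactly what the paper does in \eqref{(155)}. Your closing remark on the sign of $\tfrac{1}{2}(c_i^k - L_{g_i})$ also mirrors the paper's own comment that the step-size condition $c_i^k \ge L_{g_i}$ is what makes the primal update non-increasing.
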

\noindent{\it{Proof:}} Refer to  \textbf{Appendix} C. 
\vspace{0.1 cm}

\begin{remark}
	\textbf{Proposition} 3 provides the lower bound for the ``decrease" of the AL function w.r.t. the primal updates.  One may note that the AL function is non-increasing w.r.t the primal update at each iteration with a step-size $c_i^k \geq L_{g_i}$.
\end{remark}
\vspace{0.1 in}

\begin{proposition} \label{prop5}
	Let $\{\bm{\bar{X}}^k\}_{k \in \mathbb{N}, k \geq \underline{k}}$ and $\{\bm{Z}^k\}_{k \in \mathbb{N}, k \geq \underline{k}}$ be the sequances generated by \textbf{Algorithm} \ref{CADMM}, we have 
	\begin{equation} 
	\begin{split}
	\big \Vert \nabla_{\bm{\bar{X}}_i }  &\mathbb{L}_{\rho_{\underline{k}}} (\bm{\bar{X}}^{k+1},  \bm{Z}^{k+1}, \bm{\lambda}^k, \bm{\mu}^k) \big \Vert \\
	&\leq  \big(L_{g_i}+c_i^k \big) \Vert \bm{\bar{X}}^{k+1}_i-\bm{\bar{X}}^k_i\Vert, ~\forall i \in \mathcal{N}. \\
	\end{split}
	\end{equation}	
\end{proposition}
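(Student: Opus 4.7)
The plan is to derive the bound by comparing the first-order optimality condition of the proximal-linearized subproblem \eqref{(7)} with the actual gradient of the full AL function evaluated at the new iterate, and then invoke the Lipschitz smoothness of $g_i$ established in Proposition \ref{prop1}.

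First I would write the first-order optimality condition for the unconstrained minimization that defines $\bm{\bar{X}}_i^{k+1}$. Since $\tilde{\mathbb{L}}^i_{\rho_k}$ is smooth and quadratic in $\bm{\bar{X}}_i$, stationarity at the minimizer yields
\begin{equation*}
\nabla_{\bm{\bar{X}}_i} g_i(\bm{\bar{X}}^k_i,\bm{\lambda}^k_i,\rho_k) + \bm{A}_i^T\bm{\mu}^k_i + \rho_k\bm{A}_i^T(\bm{A}_i\bm{\bar{X}}^{k+1}_i - \bm{E}_i\bm{Z}^{k+1}) + c^k_i(\bm{\bar{X}}^{k+1}_i - \bm{\bar{X}}^k_i) = \bm{0}.
\end{equation*}

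Next I would write out $\nabla_{\bm{\bar{X}}_i}\mathbb{L}_{\rho_{\underline{k}}}(\bm{\bar{X}}^{k+1},\bm{Z}^{k+1},\bm{\lambda}^k,\bm{\mu}^k)$ directly from the definition \eqref{Lagrangian function}, noting that since $k\geq\underline{k}$ we have $\rho_k = \rho_{\underline{k}}$ by Proposition \ref{prop2}(b); this gives
\begin{equation*}
\nabla_{\bm{\bar{X}}_i}\mathbb{L}_{\rho_{\underline{k}}} = \nabla_{\bm{\bar{X}}_i} g_i(\bm{\bar{X}}^{k+1}_i,\bm{\lambda}^k_i,\rho_{\underline{k}}) + \bm{A}_i^T\bm{\mu}^k_i + \rho_{\underline{k}}\bm{A}_i^T(\bm{A}_i\bm{\bar{X}}^{k+1}_i - \bm{E}_i\bm{Z}^{k+1}).
\end{equation*}
Then I would subtract the optimality condition from this expression. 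The linear dual and penalty terms cancel exactly (thanks to $\rho_k = \rho_{\underline{k}}$), leaving
\begin{equation*}
\nabla_{\bm{\bar{X}}_i}\mathbb{L}_{\rho_{\underline{k}}} = \bigl[\nabla_{\bm{\bar{X}}_i} g_i(\bm{\bar{X}}^{k+1}_i,\bm{\lambda}^k_i,\rho_{\underline{k}}) - \nabla_{\bm{\bar{X}}_i} g_i(\bm{\bar{X}}^k_i,\bm{\lambda}^k_i,\rho_{\underline{k}})\bigr] - c^k_i(\bm{\bar{X}}^{k+1}_i - \bm{\bar{X}}^k_i).
\end{equation*}

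Finally, I would apply the triangle inequality and use Proposition \ref{prop1}, which says $\nabla_{\bm{\bar{X}}_i} g_i(\cdot,\bm{\lambda}^k_i,\rho_{\underline{k}})$ is $L_{g_i}$-Lipschitz over $\bm{\mathcal{\bar{X}}}^{\eta}_i$, together with Proposition \ref{prop2}(a) which guarantees both $\bm{\bar{X}}^k_i$ and $\bm{\bar{X}}^{k+1}_i$ lie in $\bm{\mathcal{\bar{X}}}^{\eta}_i$ for $k\geq\underline{k}$. This yields $\|\nabla_{\bm{\bar{X}}_i}\mathbb{L}_{\rho_{\underline{k}}}\| \leq (L_{g_i}+c^k_i)\|\bm{\bar{X}}^{k+1}_i - \bm{\bar{X}}^k_i\|$, which is the claim.

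The proof is essentially mechanical once one carefully lines up the two gradient expressions; no step is genuinely hard. The only subtlety to watch out for is ensuring that the penalty parameter used in $g_i$ matches in both the optimality condition and the AL expression — this is exactly where invoking Proposition \ref{prop2}(b) (so that $\rho_k = \rho_{\underline{k}}$ for all $k\geq\underline{k}$) becomes essential, otherwise a residual term involving $(\rho_k-\rho_{\underline{k}})\bm{A}_i^T(\bm{A}_i\bm{\bar{X}}^{k+1}_i - \bm{E}_i\bm{Z}^{k+1})$ would appear and spoil the clean bound.
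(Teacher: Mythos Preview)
Your proposal is correct and follows essentially the same route as the paper: write the first-order optimality condition for the quadratic subproblem \eqref{(7)}, compare it with the explicit expression for $\nabla_{\bm{\bar{X}}_i}\mathbb{L}_{\rho_{\underline{k}}}$ at the new iterate, cancel the matching dual/penalty terms, and finish with the triangle inequality together with the Lipschitz bound on $\nabla_{\bm{\bar{X}}_i} g_i$ from Proposition~\ref{prop1}. The only cosmetic difference is that the paper first substitutes the dual update $\bm{\mu}^{k+1}_i=\bm{\mu}^{k}_i+\rho_{\underline{k}}(\bm{A}_i\bm{\bar{X}}^{k+1}_i-\bm{E}_i\bm{Z}^{k+1})$ to collapse the optimality condition to $\nabla_{\bm{\bar{X}}_i} g_i(\bm{\bar{X}}^k_i,\ldots)+\bm{A}_i^T\bm{\mu}^{k+1}_i+c^k_i(\bm{\bar{X}}^{k+1}_i-\bm{\bar{X}}^k_i)=\bm{0}$, whereas you keep $\bm{\mu}^k_i$ and the penalty term separate and cancel them directly; the resulting difference expression and bound are identical.
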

\noindent{\it{Proof:}} Refer to \textbf{Appendix} D. 
\vspace{0.1 cm}

\begin{remark}
\textbf{Proposition} 4 provides the upper bound for the subgradients of the AL functions after primal update at each iteration.
\end{remark}
\vspace{0.1 in}

\begin{proposition} \label{prop9}
	Let $\{\bm{\bar{X}}^k\}_{k \in \mathbb{N}, k\geq \underline{k}}$  be the sequence generated by  \textbf{Algorithm} \ref{CADMM}, we have
	\begin{eqnarray}
	\begin{split}
	\Vert \bm{\gamma}^{k+1}_i - \bm{\gamma}^{k}_i \Vert \leq \Omega^i_1 \Vert \bm{\bar{X}}^{k+1}_i-\bm{\bar{X}}^k_i\Vert+& \Omega^i_2 \Vert  \bm{\bar{X}}^k_i-\bm{\bar{X}}^{k-1}_i \Vert,\\
	  & ~\forall i \in \mathcal{N}.
	\end{split}
	\end{eqnarray}
	where $\bm{\gamma}_i=\big((\bm{\lambda}_i)^T, (\bm{\mu}_i)^T\big)^T$ denoting the augmented Lagrangian multipliers for Agent $i$. We have $\Omega^i_1= \big(L_{g_i} +c_i^k+L_{f_i}+L_{\phi_i}+M_{h_i} M_{\bm{\gamma}_i} \big)/\theta$  and 
	$\Omega^i_2=\big(L_{g_i}+c_i^{k\!-\!1})/\theta$ ($i \in \mathcal{N}$).
\end{proposition}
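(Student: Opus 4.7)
The plan is to combine Assumption (A5) on uniform regularity of $\bm{F}_i$ with the primal first–order optimality of the subproblem \eqref{(7)} at two consecutive iterations, and then bound the remaining cross–terms by the Lipschitz and boundedness hypotheses (A1)–(A3) and (A6).

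First I would use (A5). Since Proposition 1 guarantees $\bm{\bar{X}}_i^{k+1}\in\bm{\mathcal{\bar X}}_i^{\eta}$ for $k\ge\underline k$, uniform regularity of $\bm{F}_i$ at $\bm{\bar X}_i^{k+1}$ gives, with $\bm v=\bm{\gamma}_i^{k+1}-\bm{\gamma}_i^k$,
$$
\theta\,\|\bm{\gamma}_i^{k+1}-\bm{\gamma}_i^k\|\;\le\;\bigl\|\bm{F}_i(\bm{\bar X}_i^{k+1})^{T}(\bm{\gamma}_i^{k+1}-\bm{\gamma}_i^k)\bigr\|.
$$
It therefore suffices to bound the right–hand side by a linear combination of $\|\bm{\bar X}_i^{k+1}-\bm{\bar X}_i^k\|$ and $\|\bm{\bar X}_i^k-\bm{\bar X}_i^{k-1}\|$.

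Next I would split using the telescoping identity
$$
\bm{F}_i(\bm{\bar X}_i^{k+1})^{T}(\bm{\gamma}_i^{k+1}-\bm{\gamma}_i^k)
=\bigl[\bm{F}_i(\bm{\bar X}_i^{k+1})^{T}\bm{\gamma}_i^{k+1}-\bm{F}_i(\bm{\bar X}_i^{k})^{T}\bm{\gamma}_i^{k}\bigr]+\bigl[\bm{F}_i(\bm{\bar X}_i^{k})^{T}-\bm{F}_i(\bm{\bar X}_i^{k+1})^{T}\bigr]\bm{\gamma}_i^{k}.
$$
Only the $\nabla_{\bm X_i}h_i$ block of $\bm{F}_i$ depends on $\bm{\bar X}_i$ (the $\bm A_i$ block is constant), so using the Lipschitz property of $\nabla h_i$ from (A1) together with the uniform multiplier bound from (A6) yields
$$
\bigl\|\bigl[\bm{F}_i(\bm{\bar X}_i^{k})^{T}-\bm{F}_i(\bm{\bar X}_i^{k+1})^{T}\bigr]\bm{\gamma}_i^{k}\bigr\|\le M_{h_i}M_{\bm{\gamma}_i}\,\|\bm{\bar X}_i^{k+1}-\bm{\bar X}_i^{k}\|,
$$
which contributes the $M_{h_i}M_{\bm{\gamma}_i}$ term of $\Omega_1^{i}$.

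For the remaining bracket I would apply the first–order optimality of the proximal–linearized subproblem \eqref{(7)} at iteration $k$, namely $\bm A_i^{T}\bm\mu_i^{k+1}+\nabla_{\bm{\bar X}_i}g_i(\bm{\bar X}_i^{k},\bm\lambda_i^{k},\rho_k)+c_i^{k}(\bm{\bar X}_i^{k+1}-\bm{\bar X}_i^{k})=0$, and the analogous identity at iteration $k-1$. Using the dual–update rule $\bm{\mu}_i^{k+1}=\bm{\mu}_i^{k}+\rho_k(\bm A_i\bm{\bar X}_i^{k+1}-\bm E_i\bm Z^{k+1})$ and $\bm\lambda_i^{k+1}=\bm\lambda_i^{k}+\rho_k h_i(\bm X_i^{k+1})$, both $\bm{F}_i(\bm{\bar X}_i^{k+1})^{T}\bm{\gamma}_i^{k+1}$ and $\bm{F}_i(\bm{\bar X}_i^{k})^{T}\bm{\gamma}_i^{k}$ can be written in closed form in terms of $\nabla g_i$ evaluated at the previous iterate, a proximal correction, and a small residue of the form $(\nabla h_i(\bm X_i^{k+1}))^{T}\bm\lambda_i^{k+1}-(\nabla h_i(\bm X_i^{k}))^{T}(\bm\lambda_i^{k}+\rho_k h_i(\bm X_i^{k}))$. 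Subtracting the two expressions, the dominant $\nabla g_i$ blocks collapse into $\nabla g_i(\bm{\bar X}_i^{k},\bm\lambda_i^{k},\rho_k)-\nabla g_i(\bm{\bar X}_i^{k-1},\bm\lambda_i^{k-1},\rho_{k-1})$, which by Proposition 2 is bounded by $L_{g_i}\|\bm{\bar X}_i^{k}-\bm{\bar X}_i^{k-1}\|$. The proximal terms give the $c_i^{k}$ and $c_i^{k-1}$ contributions, while the residual gradient–of–constraint pieces are bounded by the norm bounds $L_{f_i},L_{\phi_i}$ on $\nabla\tilde f_i,\nabla\phi_i$ from (A2)–(A3) and by the Lipschitz property of $\nabla h_i$ combined with (A6).

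Collecting all contributions yields
$$
\bigl\|\bm{F}_i(\bm{\bar X}_i^{k+1})^{T}(\bm{\gamma}_i^{k+1}-\bm{\gamma}_i^k)\bigr\|\le(L_{g_i}+c_i^{k}+L_{f_i}+L_{\phi_i}+M_{h_i}M_{\bm{\gamma}_i})\|\bm{\bar X}_i^{k+1}-\bm{\bar X}_i^{k}\|+(L_{g_i}+c_i^{k-1})\|\bm{\bar X}_i^{k}-\bm{\bar X}_i^{k-1}\|,
$$
and division by $\theta$ gives the claimed bound. The main obstacle I expect is the careful bookkeeping in Step 3, in particular ensuring that the cross terms mixing the current $\bm\lambda_i^{k+1}$ with the shifted Jacobian $\nabla h_i(\bm X_i^{k})$ are correctly split between $\Omega_1^{i}$ and $\Omega_2^{i}$, and that the $\rho_k(\nabla h_i)^{T}h_i$ contribution is absorbed into the constant $L_{g_i}$ provided by Proposition 2 rather than creating an extra $\rho$-dependent term.
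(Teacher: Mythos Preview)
Your proposal follows essentially the same strategy as the paper. Steps 1--2 (uniform regularity of $\bm F_i$ from (A5), the telescoping split, and the bound $M_{h_i}M_{\bm\gamma_i}\|\bm{\bar X}_i^{k+1}-\bm{\bar X}_i^k\|$ on the cross term via (A1) and (A6)) match the paper's argument exactly.

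The one place where your Step 3 is shakier than the paper's execution is the claim that the $\nabla g_i$ blocks ``collapse into $\nabla g_i(\bm{\bar X}_i^{k},\bm\lambda_i^{k},\rho_k)-\nabla g_i(\bm{\bar X}_i^{k-1},\bm\lambda_i^{k-1},\rho_{k-1})$, which by Proposition~2 is bounded by $L_{g_i}\|\bm{\bar X}_i^{k}-\bm{\bar X}_i^{k-1}\|$.'' Proposition~2 gives Lipschitz control of $g_i$ (hence of $\nabla g_i$) in $\bm{\bar X}_i$ \emph{for fixed} $\bm\lambda_i^k$; it does not directly control a difference in which $\bm\lambda_i$ also changes. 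The paper sidesteps this by using the dual update $\bm\lambda_i^{k+1}=\bm\lambda_i^k+\rho_{\underline k}h_i(\bm X_i^{k+1})$ to rewrite
\[
\bm F_i(\bm{\bar X}_i^{k+1})^T\bm\gamma_i^{k+1}
=\nabla_{\bm{\bar X}_i}\mathbb{L}_{\rho_{\underline k}}(\bm{\bar X}^{k+1},\bm Z^{k+1},\bm\lambda^k,\bm\mu^k)
-\nabla_{\bm X_i}\tilde f_i(\bm X_i^{k+1})-\nabla_{\bm X_i}\phi_i(\bm X_i^{k+1}),
\]
and similarly at the previous iteration. Then Proposition~4 (which already packages your first-order optimality identity, but at the \emph{same} multiplier) bounds the two $\|\nabla_{\bm{\bar X}_i}\mathbb{L}_{\rho_{\underline k}}(\cdot)\|$ terms by $(L_{g_i}+c_i^k)\|\bm{\bar X}_i^{k+1}-\bm{\bar X}_i^k\|$ and $(L_{g_i}+c_i^{k-1})\|\bm{\bar X}_i^{k}-\bm{\bar X}_i^{k-1}\|$ respectively, while the residual $\nabla\tilde f_i,\nabla\phi_i$ differences give the $L_{f_i}+L_{\phi_i}$ contribution via (A2)--(A3). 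This is precisely the bookkeeping issue you flag at the end; routing through Proposition~4 rather than subtracting $\nabla g_i$ at two different multipliers is the clean way to resolve it.
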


\noindent{\it{Proof:}} Refer to \textbf{Appendix} E. 
\vspace{0.1 cm}

\begin{remark}
	\textbf{Proposition} 5 provides the upper bound for the difference of the Lagrangian multipliers over two successive iterations.
\end{remark}
\vspace{0.1 in}









To illustrate the convergence of the proposed PLMD, we need to resort to a Lyapunov function defined as
		 \begin{equation*}
\begin{split}
\Phi_{\beta}(\bm{\bar{X}}, \bm{Z}, \bm{\lambda}, \bm{\mu}, \bm{U})&=\mathbb{L}_{\rho} (\bm{\bar{X}}, \bm{Z}, \bm{\lambda}, \bm{\mu})
+\beta \Vert  \bm{\bar{X}}-\bm{U}\Vert^2,\\
\end{split}
\end{equation*}
where $\beta>0$ and $\bm{U}\in \mathbb{R}^{\sum_{i \in \mathcal{N}} \bar{n}_i}$.  One may note that the Lyapunov function closely relates to the AL function except for the extra term $\beta \Vert  \bm{\bar{X}}-\bm{U}\Vert^2$.
For the Lyapunov function under the sequences generated by \textbf{Algorithm} \ref{CADMM}, we have the following proposition. 

\begin{proposition} \label{prop11}
	Let $\Big\{\bm{W}^k\!=\!\big(\bm{\bar{X}}^k,  \!\bm{Z}^k, \! \bm{\lambda}^k, \! \bm{\mu}^k, \bm{\bar{X}}^{k-1} \big) \Big\}_{k \in \mathbb{N}, k\geq \underline{k}}$ be the sequence generated by  \textbf{Algorithm} \ref{CADMM}, we have
	\begin{equation} \label{(27)}
	\begin{split}
	\Phi_{\beta_k}&(\bm{W}^k)-\Phi_{\beta_{k+1}}(\bm{W}^{k+1}) \\
	&  \geq b_1^k  \Vert \bm{\bar{X}}^{k+1}-\bm{\bar{X}}^{k} \Vert^2 +b^k_2   \Vert \bm{\bar{X}}^{k}-\bm{\bar{X}}^{k-1}\Vert^2,\\
	\end{split}
	\end{equation}
	where 
	$ \Phi_{\beta_k}( \bm{W}^k)\!=\!\mathbb{L}_{\rho_{\underline{k}}} (\bm{\bar{X}}^k, \bm{Z}^k, \bm{\lambda}^k, \bm{\mu}^k)
	+\beta_k \Vert  \bm{\bar{X}}^k-\bm{\bar{X}}^{k-1}\Vert^2$,
	with $\beta_k$ a positive parameter. Besides, we have
	\begin{equation*}
	\begin{split}
	&b^k_1=\min \limits_{i} \Big \{\frac{1}{2} (c_i^k-L_{g_i})- \frac{2 (\Omega^i_1)^2}{ \rho_{\underline{k}} }-\beta_{k+1} \Big\},\\
	& b^k_2=\min \limits_{i} \Big \{\beta_k-\frac{2 (\Omega^i_2)^2}{ \rho_{\underline{k}} }\Big\}.
	\end{split}
	\end{equation*}
	
\end{proposition}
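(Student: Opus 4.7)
\vspace{0.1 cm}
\noindent\textit{Proof plan.} The plan is to evaluate the telescoping difference $\Phi_{\beta_k}(\bm{W}^k)-\Phi_{\beta_{k+1}}(\bm{W}^{k+1})$ by splitting it, according to the definition of $\Phi$, into (i) the change of the AL function and (ii) the change of the quadratic memory term $\beta_k\Vert\bm{\bar X}^k-\bm{\bar X}^{k-1}\Vert^2$. For part (i), I would further insert the intermediate iterate $(\bm{\bar X}^{k+1},\bm{Z}^{k+1},\bm{\lambda}^k,\bm{\mu}^k)$ between the two endpoints, so the AL-increment splits into a \emph{primal block} (change of $\bm{\bar X},\bm{Z}$ with duals fixed) and a \emph{dual block} (change of $\bm{\lambda},\bm{\mu}$ with the new primals).

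Next I would handle each block separately. For the primal block, Proposition~\ref{prop4} directly provides the descent estimate
\[
\mathbb{L}_{\rho_{\underline{k}}}(\bm{\bar X}^{k},\bm{Z}^{k},\bm{\lambda}^k,\bm{\mu}^k)-\mathbb{L}_{\rho_{\underline{k}}}(\bm{\bar X}^{k+1},\bm{Z}^{k+1},\bm{\lambda}^k,\bm{\mu}^k)\geq \sum_{i=1}^N\tfrac{1}{2}(c_i^k-L_{g_i})\Vert\bm{\bar X}^{k+1}_i-\bm{\bar X}^k_i\Vert^2.
\]
For the dual block, I would exploit the linearity of the AL function in $\bm{\lambda}$ and $\bm{\mu}$ together with the explicit dual update rule in \textbf{Algorithm}~\ref{CADMM}: since $\bm{\lambda}_i^{k+1}-\bm{\lambda}_i^k=\rho_{\underline{k}}\,h_i(\bm{X}^{k+1}_i)$ and $\bm{\mu}_i^{k+1}-\bm{\mu}_i^k=\rho_{\underline{k}}(\bm{A}_i\bm{\bar X}^{k+1}_i-\bm{E}_i\bm{Z}^{k+1})$, direct substitution yields exactly
\[
\mathbb{L}_{\rho_{\underline{k}}}(\bm{\bar X}^{k+1},\bm{Z}^{k+1},\bm{\lambda}^k,\bm{\mu}^k)-\mathbb{L}_{\rho_{\underline{k}}}(\bm{\bar X}^{k+1},\bm{Z}^{k+1},\bm{\lambda}^{k+1},\bm{\mu}^{k+1})=-\tfrac{1}{\rho_{\underline{k}}}\sum_{i=1}^N\Vert\bm{\gamma}^{k+1}_i-\bm{\gamma}^k_i\Vert^2,
\]
which is the negative term that must be absorbed.

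To absorb it, I would invoke Proposition~\ref{prop9} and the elementary inequality $(a+b)^2\leq 2a^2+2b^2$ to obtain
\[
\Vert\bm{\gamma}^{k+1}_i-\bm{\gamma}^k_i\Vert^2\leq 2(\Omega^i_1)^2\Vert\bm{\bar X}^{k+1}_i-\bm{\bar X}^k_i\Vert^2+2(\Omega^i_2)^2\Vert\bm{\bar X}^{k}_i-\bm{\bar X}^{k-1}_i\Vert^2.
\]
Combining the primal descent, the dual loss, and the two auxiliary quadratic terms $\beta_k\Vert\bm{\bar X}^k-\bm{\bar X}^{k-1}\Vert^2-\beta_{k+1}\Vert\bm{\bar X}^{k+1}-\bm{\bar X}^k\Vert^2$ from part (ii), I group coefficients agentwise and then pass to the uniform constants $b_1^k$ and $b_2^k$ defined by the minima over $i\in\mathcal{N}$, which yields the claimed inequality once I identify $\sum_i\Vert\bm{\bar X}^{k+1}_i-\bm{\bar X}^k_i\Vert^2=\Vert\bm{\bar X}^{k+1}-\bm{\bar X}^k\Vert^2$.

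The main obstacle here is the dual block: the AL function in general is not monotone along the dual update, and the only reason the recursion still descends is that the negative $-\tfrac{1}{\rho_{\underline{k}}}\Vert\bm{\gamma}^{k+1}-\bm{\gamma}^k\Vert^2$ contribution can be bounded by successive primal increments via the uniform regularity assumption (A5), through Proposition~\ref{prop9}. Careful bookkeeping is required to split the quadratic in $\Vert\bm{\bar X}^{k+1}_i-\bm{\bar X}^k_i\Vert^2$ versus $\Vert\bm{\bar X}^k_i-\bm{\bar X}^{k-1}_i\Vert^2$ and to tune $\beta_{k+1}$ so that both $b_1^k$ and $b_2^k$ can be made strictly positive by choosing $c_i^k$, $\rho_{\underline{k}}$, and $\beta_k$ large enough.
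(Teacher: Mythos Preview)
Your proposal is correct and follows essentially the same route as the paper: split the AL increment into a primal block handled by Proposition~\ref{prop4} and a dual block computed via the explicit multiplier updates, then bound the dual loss $\tfrac{1}{\rho_{\underline{k}}}\sum_i\Vert\bm{\gamma}_i^{k+1}-\bm{\gamma}_i^k\Vert^2$ using Proposition~\ref{prop9} together with $(a+b)^2\le 2a^2+2b^2$, and finally add the $\beta$-terms and take minima over $i$. The paper's proof in Appendix~F differs only in the order of presentation (it treats the dual block first), so your plan matches it.
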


\noindent{\it{Proof:}} Refer to \textbf{Appendix} F. 
\vspace{0.1 cm}

\begin{remark}
	From \textbf{Proposition} 1, we have $\{ \bm{\bar{X}}_k \}_{k \in \mathbb{N}, k \geq \underline{k}} \in \bm{\mathcal{\bar{X}}}^{\eta}_i$. 
	To illustrate the convergence of PLDM over $ k \geq \underline{k}$, we resort to the Lyaponov function $\Phi_{\beta_k}(\bm{W}^k)$.  \textbf{Proposition} 6 provides the lower bound for the decrease of the Lyapunov function over successive iterations after iteration $\underline{k}$.
\end{remark}
\vspace{0.1 in}


\begin{proposition} \label{prop12}
	Let $\{ \bm{\lambda}^k \}_{k \in \mathbb{N}} $  and  $\{\bm{\bar{X}}^k\}_{k \in \mathbb{N}}$  be the sequance generated by  \textbf{Algorithm} \ref{CADMM}, then we have
	\begin{equation}
	\begin{split}
	& \lim_{k \rightarrow  +\infty} \Big \Vert \bm{\bar{X}}^{k+1} -\bm{\bar{X}}^k \Big \Vert \rightarrow 0.\\
	& \lim_{k \rightarrow  +\infty} \Big \Vert \bm{\bar{X}}^{k+1}_i -\bm{\bar{X}}^k_i \Big \Vert \rightarrow 0,\\
	& \lim_{k \rightarrow  +\infty} \Big \Vert \bm{\lambda}_i^{k+1} -\bm{\lambda}_i^k \Big \Vert \rightarrow 0,\\
	& \lim_{k \rightarrow  +\infty} \Big \Vert \bm{\mu}_i^{k+1} -\bm{\mu}_i^k \Big \Vert \rightarrow 0,  ~\forall i \in \mathcal{N},\\
	\end{split}
	\end{equation}
	provided with \textbf{Condition} (a):\\
	\begin{equation*}
	\begin{split}
	&b^k_1=\min \limits_{i} \Big \{\frac{1}{2} (c_i^k-L_{g_i})- \frac{2 (\Omega^i_1)^2}{ \rho_{\underline{k}} }-\beta_{k+1} \Big\} >0,\\
	&b^k_2=\min \limits_{i} \Big \{\beta_k-\frac{2(\Omega^i_2)^2}{ \rho_{ \underline{k}} } \Big\}>0.
	\end{split}
	\end{equation*}
	is satisfied.
\end{proposition}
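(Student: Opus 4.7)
The plan is to combine the descent property established in Proposition 6 with the lower bound assumption (A7) to derive a summability result, and then invoke Proposition 5 to transfer the vanishing primal increments to the dual variables.

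First, I would observe that under Condition (a), both coefficients $b_1^k$ and $b_2^k$ are strictly positive, so the inequality
\begin{equation*}
\Phi_{\beta_k}(\bm{W}^k) - \Phi_{\beta_{k+1}}(\bm{W}^{k+1}) \geq b_1^k \Vert \bar{\bm{X}}^{k+1}-\bar{\bm{X}}^k\Vert^2 + b_2^k \Vert \bar{\bm{X}}^k-\bar{\bm{X}}^{k-1}\Vert^2
\end{equation*}
supplied by Proposition 6 forces $\{\Phi_{\beta_k}(\bm{W}^k)\}_{k \geq \underline{k}}$ to be nonincreasing. Next, I would lower-bound this sequence: by (A7) the augmented Lagrangian $\mathbb{L}_{\rho_{\underline{k}}}$ is bounded below over $\bar{\bm{X}},\, \bm{Z} \in \bm{\mathcal{X}}$, and since $\beta_k \Vert \bar{\bm{X}}^k - \bar{\bm{X}}^{k-1}\Vert^2 \geq 0$, the Lyapunov function itself is bounded below. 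A monotone bounded sequence converges, so $\Phi_{\beta_k}(\bm{W}^k)$ has a finite limit, which makes the right-hand sides telescopically summable.

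Summing the descent inequality from $k=\underline{k}$ to any $K$ and letting $K \to \infty$, I would obtain
\begin{equation*}
\sum_{k \geq \underline{k}} \Big( b_1^k \Vert \bar{\bm{X}}^{k+1}-\bar{\bm{X}}^k\Vert^2 + b_2^k \Vert \bar{\bm{X}}^k-\bar{\bm{X}}^{k-1}\Vert^2 \Big) < +\infty.
\end{equation*}
Provided the chosen step-sizes $c_i^k$ and penalty $\beta_k$ are set so that $\inf_k b_1^k > 0$ (which is implicit in Condition (a) when $c_i^k$ and $\beta_k$ are kept constant at admissible values, recalling $\rho_{\underline{k}}$ stabilizes after finitely many steps by Proposition 1(b)), this yields the square-summability of $\Vert \bar{\bm{X}}^{k+1}-\bar{\bm{X}}^k\Vert$, hence $\Vert \bar{\bm{X}}^{k+1}-\bar{\bm{X}}^k\Vert \to 0$. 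The componentwise statement $\Vert \bar{\bm{X}}^{k+1}_i - \bar{\bm{X}}^k_i\Vert \to 0$ is immediate since $\Vert \bar{\bm{X}}^{k+1}-\bar{\bm{X}}^k\Vert^2 = \sum_i \Vert \bar{\bm{X}}^{k+1}_i - \bar{\bm{X}}^k_i\Vert^2$.

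Finally, to close out the dual statements, I would apply Proposition 5, which provides
\begin{equation*}
\Vert \bm{\gamma}^{k+1}_i - \bm{\gamma}^k_i \Vert \leq \Omega^i_1 \Vert \bar{\bm{X}}^{k+1}_i - \bar{\bm{X}}^k_i \Vert + \Omega^i_2 \Vert \bar{\bm{X}}^k_i - \bar{\bm{X}}^{k-1}_i \Vert,
\end{equation*}
where $\bm{\gamma}_i = ((\bm{\lambda}_i)^T,(\bm{\mu}_i)^T)^T$. Since both terms on the right vanish by the previous step and $\Omega_1^i, \Omega_2^i$ are finite constants (their finiteness relies on the boundedness of the multiplier sequences from (A6) and on Proposition 1(b) stabilizing $\rho_k$), we conclude $\Vert \bm{\lambda}^{k+1}_i - \bm{\lambda}^k_i\Vert \to 0$ and $\Vert \bm{\mu}^{k+1}_i - \bm{\mu}^k_i\Vert \to 0$ for every $i\in\mathcal{N}$. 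The main obstacle I expect is the bookkeeping that guarantees the summability argument is not degenerated by $b_1^k, b_2^k$ shrinking too fast; avoiding this requires verifying that, after the finite stabilization phase ensured by Proposition 1, the algorithmic parameters $c_i^k$ and $\beta_k$ can be kept bounded above while Condition (a) still holds, so that $\inf_k b_j^k > 0$ for $j=1,2$.
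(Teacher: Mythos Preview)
Your proposal is correct and follows essentially the same route as the paper: telescope the descent estimate of Proposition~6, use the lower bound from (A7) to obtain square-summability of the primal increments under Condition~(a), and then invoke Proposition~5 to deduce the vanishing of the dual increments. The paper handles the uniform positivity issue you flag simply by setting $b_j=\min_k b_j^k$ without further comment, so your attention to this point is, if anything, more careful than the original.
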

\vspace{0.1 in}

\noindent{\it{Proof:}} Refer to \textbf{Appendix} G. 
\vspace{0.1 cm}

\begin{remark}
	\textbf{Proposition} 7 illustrates the boundedness of primal and dual sequences generated by \textbf{Algorithm} 1 under \textbf{Condition (a)}, which 
	 can be satisfied by selecting $\beta_k$ that $\max_i \{ \frac{2(\Omega^i_2)^2}{ \rho_{ \underline{k}} } \}  < \beta_k< \min_i \{ \frac{1}{2} (c_i^{k-1}-L_{g_i})- \frac{2 (\Omega^i_1)^2}{ \rho_{\underline{k}} } \}$.
\end{remark}
\vspace{0.1 in}

\begin{proposition} \label{prop13}
	Let $\{ \bm{W}^k \}_{k \in \mathbb{N}, k \geq \underline{k}} $ be the sequance generated by  \textbf{Algorithm} \ref{CADMM}, we have
	\begin{equation}
	\begin{split}
	\Vert \nabla \Phi_{\beta_{k+1}}  (\bm{W}^{k+1})\Vert & \leq b^k_3 \sum_{i=1}^N \Vert \bm{\bar{X}}^{k+1}_i-\bm{\bar{X}}^k_i\Vert \\
	&+ b^k_4 \sum_{i=1}^N  \Vert \bm{\bar{X}}^{k}_i-\bm{\bar{X}}^{k-1}_i\Vert\\
	\end{split}
	\end{equation}
	where
	 we have
	$B=\sup_{k \geq \underline{k}} \Vert \bm{F}_i(\bm{\bar{X}}_i^{k+1})  \Vert$.  
	$b^k_3=\max \limits_{i} \big\{L_{g_i}+c^k_i +\Omega^i_1 B+4\beta_{k+1}+\rho_{\underline{k}}+\frac{\Omega^i_1}{\rho_{\underline{k}}}  \big\}$ and $b^k_4=\max \limits_{i} \big\{  \Omega^i_2B +\frac{\Omega^i_2}{\rho_{\underline{k}}} \big\}$.

\end{proposition}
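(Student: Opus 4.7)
The plan is to bound $\|\nabla \Phi_{\beta_{k+1}}(\bm{W}^{k+1})\|$ by decomposing it block-by-block over the five coordinates $(\bm{\bar{X}}, \bm{Z}, \bm{\lambda}, \bm{\mu}, \bm{U})$ of $\bm{W}$, using (i) the optimality conditions of the primal updates \eqref{(6)}--\eqref{(7)}, (ii) the dual update from Step 6 to relate $h_i(\bm{X}_i^{k+1})$ and $\bm{A}_i\bm{\bar{X}}_i^{k+1}-\bm{E}_i\bm{Z}^{k+1}$ to multiplier differences, and (iii) Propositions~\ref{prop5} and~\ref{prop9} to convert multiplier differences into the successive primal differences $\|\bm{\bar{X}}^{k+1}-\bm{\bar{X}}^k\|$ and $\|\bm{\bar{X}}^k-\bm{\bar{X}}^{k-1}\|$ that appear on the right-hand side.

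First I would handle the $\bm{\bar{X}}$-block. Starting from Proposition~\ref{prop5} we already have the bound on $\|\nabla_{\bm{\bar{X}}_i}\mathbb{L}_{\rho_{\underline{k}}}(\bm{\bar{X}}^{k+1},\bm{Z}^{k+1},\bm{\lambda}^k,\bm{\mu}^k)\|$ using the old multipliers; to replace the old multipliers by $(\bm{\lambda}^{k+1},\bm{\mu}^{k+1})$, I would add and subtract and observe that the difference is exactly $\bm{F}_i(\bm{\bar{X}}_i^{k+1})^T(\bm{\gamma}_i^{k+1}-\bm{\gamma}_i^k)$ by direct inspection of \eqref{Lagrangian function}. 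Bounding $\|\bm{F}_i\|\le B$ and applying Proposition~\ref{prop9} then converts this into the desired form. Finally the extra Lyapunov term $\beta_{k+1}\|\bm{\bar{X}}-\bm{U}\|^2$ contributes $2\beta_{k+1}\|\bm{\bar{X}}^{k+1}-\bm{\bar{X}}^k\|$.

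Second, for the constrained $\bm{Z}$-block, the update \eqref{(6)} gives the optimality condition $0\in\nabla_{\bm{Z}}\mathbb{L}_{\rho_{\underline{k}}}(\bm{Z}^{k+1},\bm{\bar{X}}^k,\bm{\mu}^k)+\mathcal{N}_{\bm{\mathcal{X}}}(\bm{Z}^{k+1})$, so there is an element of $\mathcal{N}_{\bm{\mathcal{X}}}(\bm{Z}^{k+1})$ equal to the negative of that gradient. The distance from $0$ to $\nabla_{\bm{Z}}\mathbb{L}_{\rho_{\underline{k}}}(\bm{W}^{k+1})+\mathcal{N}_{\bm{\mathcal{X}}}(\bm{Z}^{k+1})$ is therefore at most the norm of the gradient difference under change of $(\bm{\bar{X}}^k,\bm{\mu}^k)\to(\bm{\bar{X}}^{k+1},\bm{\mu}^{k+1})$; since $\nabla_{\bm{Z}}\mathbb{L}_{\rho_{\underline{k}}}$ is linear in these arguments, this difference is a sum of $\rho_{\underline{k}}\sum_i\bm{E}_i^T\bm{A}_i(\bm{\bar{X}}_i^{k+1}-\bm{\bar{X}}_i^k)$ and $\sum_i\bm{E}_i^T(\bm{\mu}_i^{k+1}-\bm{\mu}_i^k)$, which I again rewrite via Proposition~\ref{prop9}. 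This yields the $\rho_{\underline{k}}$-term that appears in $b_3^k$. For the $\bm{\lambda}$- and $\bm{\mu}$-blocks the dual update \eqref{Lagrangian function} gives $\nabla_{\bm{\lambda}_i}\Phi=h_i(\bm{X}_i^{k+1})=\rho_{\underline{k}}^{-1}(\bm{\lambda}_i^{k+1}-\bm{\lambda}_i^k)$ and $\nabla_{\bm{\mu}_i}\Phi=\rho_{\underline{k}}^{-1}(\bm{\mu}_i^{k+1}-\bm{\mu}_i^k)$, so Proposition~\ref{prop9} delivers the $\Omega_1^i/\rho_{\underline{k}}$ and $\Omega_2^i/\rho_{\underline{k}}$ contributions. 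The $\bm{U}$-block contributes another $2\beta_{k+1}\|\bm{\bar{X}}^{k+1}-\bm{\bar{X}}^k\|$, which combines with the one from the $\bm{\bar{X}}$-block to produce the $4\beta_{k+1}$ factor.

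Summing over $i\in\mathcal{N}$ and using the triangle inequality $\|\nabla\Phi\|\le\sum_{\text{blocks}}\|\nabla_{\text{block}}\Phi\|$ (or a norm equivalence up to a fixed dimensional constant), I group coefficients of $\|\bm{\bar{X}}_i^{k+1}-\bm{\bar{X}}_i^k\|$ and $\|\bm{\bar{X}}_i^k-\bm{\bar{X}}_i^{k-1}\|$; taking the maxima across $i$ gives exactly $b_3^k$ and $b_4^k$ as stated. The main technical obstacle I anticipate is the $\bm{Z}$-block: one must correctly interpret $\nabla\Phi$ as a limiting/general subgradient on the feasible set $\bm{\mathcal{X}}$ and cleanly use the normal-cone optimality from \eqref{(6)} to cancel the term evaluated at the \emph{old} primal/dual iterate; once this cancellation is in place the remaining bookkeeping is a routine triangle-inequality argument.
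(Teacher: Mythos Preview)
Your proposal is correct and follows essentially the same block-by-block strategy as the paper: bound $\nabla_{\bm{\bar{X}}_i}\Phi$ via Proposition~\ref{prop5} plus the correction $\bm{F}_i(\bm{\bar{X}}_i^{k+1})^T(\bm{\gamma}_i^{k+1}-\bm{\gamma}_i^k)$ and Proposition~\ref{prop9}; bound $\nabla_{\bm{Z}}\Phi$ via the optimality condition of \eqref{(6)}; bound $\nabla_{\bm{\gamma}_i}\Phi$ via the dual update and Proposition~\ref{prop9}; and pick up $2\beta_{k+1}\|\bm{\bar{X}}^{k+1}-\bm{\bar{X}}^k\|$ from each of the $\bm{\bar{X}}$- and $\bm{U}$-blocks.

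One refinement worth noting on the point you flagged as the main obstacle: for the $\bm{Z}$-block the paper does \emph{not} keep the two pieces $\rho_{\underline{k}}\sum_i\bm{E}_i^T\bm{A}_i(\bm{\bar{X}}_i^{k+1}-\bm{\bar{X}}_i^k)$ and $\sum_i\bm{E}_i^T(\bm{\mu}_i^{k+1}-\bm{\mu}_i^k)$ separately. Instead it substitutes the dual update $\bm{\mu}_i^{k+1}=\bm{\mu}_i^k+\rho_{\underline{k}}(\bm{A}_i\bm{\bar{X}}_i^{k+1}-\bm{E}_i\bm{Z}^{k+1})$ directly into the optimality condition of \eqref{(6)}, which yields the clean identity
\[
-\sum_{i=1}^N(\bm{E}_i)^T\bm{\mu}_i^{k+1}+\mathcal{N}_{\bm{\mathcal{X}}}(\bm{Z}^{k+1})\ \ni\ \rho_{\underline{k}}\sum_{i=1}^N(\bm{E}_i)^T\bm{A}_i(\bm{\bar{X}}_i^{k+1}-\bm{\bar{X}}_i^k),
\]
so the $\bm{\mu}$-difference is absorbed and Proposition~\ref{prop9} is not needed here. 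This is exactly why only the bare $\rho_{\underline{k}}$ (and no additional $\Omega_1^i,\Omega_2^i$ from the $\bm{Z}$-block) appears in the stated constants $b_3^k,b_4^k$.
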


\noindent{\it{Proof:}} Refer to \textbf{Appendix} H. 
\vspace{0.1 cm}

\begin{remark}
	\textbf{Proposition} 8 provides the upper bound for the subgradient of the Lyapunov function $\Phi_{\beta_{k}}$. 
\end{remark}
\vspace{0.1 in}




Based on \textbf{Proposition}1-9, we have the following results for the convergence of the PLDM.

\begin{theorem} \label{theorem1}
	(\textbf{Convergence}) The PLDM described in \textbf{Algorithm} \ref{CADMM} converges to the critical points of problem \eqref{P5} provided with a step-size $c_k^i$ satisfying \textbf{Condition} (a) in \textbf{Proposition} 7.
\end{theorem}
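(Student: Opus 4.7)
The plan is to combine Propositions 1--8 into a standard Lyapunov-style argument. First, invoking \textbf{Proposition} 1, I would restrict attention to iterations $k\geq \underline{k}$, on which the penalty factor is fixed at $\rho_{\underline{k}}$ and all primal iterates lie in the bounded set $\bm{\mathcal{\bar{X}}}^\eta_i$; this allows me to freely use the Lipschitz bounds of \textbf{Proposition} 2 and the uniform regularity of (A5). Next, under \textbf{Condition} (a), \textbf{Proposition} 6 shows that the Lyapunov sequence $\{\Phi_{\beta_k}(\bm{W}^k)\}$ is monotonically non-increasing, and by (A7) together with the nonnegativity of the quadratic terms it is bounded below. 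Hence $\Phi_{\beta_k}(\bm{W}^k)$ converges to a finite limit, and telescoping \eqref{(27)} gives the square-summability
\begin{equation*}
\sum_{k \geq \underline{k}} \Big( b^k_1 \Vert \bm{\bar{X}}^{k+1}-\bm{\bar{X}}^{k}\Vert^2 + b^k_2 \Vert \bm{\bar{X}}^{k}-\bm{\bar{X}}^{k-1}\Vert^2 \Big) < +\infty.
\end{equation*}

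From this, \textbf{Proposition} 7 delivers $\|\bm{\bar{X}}^{k+1}-\bm{\bar{X}}^k\| \to 0$, and, via \textbf{Proposition} 5, the dual differences $\|\bm{\lambda}_i^{k+1}-\bm{\lambda}_i^k\|$ and $\|\bm{\mu}_i^{k+1}-\bm{\mu}_i^k\|$ also vanish. Since $\bm{\bar{X}}^k$ lies in the bounded set $\bm{\mathcal{\bar{X}}}^\eta$ by \textbf{Proposition} 1, $\bm{Z}^k \in \bm{\mathcal{X}}$ is bounded, and the Lagrange multipliers are bounded by (A6), Bolzano--Weierstrass yields a convergent subsequence $\{\bm{W}^{k_\ell}\}$ with limit $\bm{W}^* = (\bm{\bar{X}}^*, \bm{Z}^*, \bm{\lambda}^*, \bm{\mu}^*, \bm{\bar{X}}^*)$.

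I would then verify that $\bm{W}^*$ satisfies the KKT system in the \textbf{Remark} following \textbf{Definition} \ref{Def2}. \textbf{Proposition} 8 gives $\Vert \nabla \Phi_{\beta_{k+1}}(\bm{W}^{k+1})\Vert \to 0$, so $\nabla_{\bm{\bar{X}}} \mathbb{L}_{\rho_{\underline{k}}}(\bm{W}^*) = \bm{0}$; the first-order optimality of the $\bm{Z}$-subproblem \eqref{(6)} passes to the limit (the projection/normal-cone inclusion is closed) to yield $\nabla_{\bm{Z}} \mathbb{L}_{\rho_{\underline{k}}}(\bm{W}^*) + \mathcal{N}_{\bm{\mathcal{X}}}(\bm{Z}^*) \ni \bm{0}$; and the vanishing dual updates force $h_i(\bm{X}^*_i) = \bm{0}$ and $\bm{A}_i \bm{\bar{X}}^*_i = \bm{E}_i \bm{Z}^*$ because $\bm{\lambda}_i^{k+1}-\bm{\lambda}_i^k = \rho_{\underline{k}} h_i(\bm{X}^{k+1}_i)$ and $\bm{\mu}_i^{k+1}-\bm{\mu}_i^k = \rho_{\underline{k}}(\bm{A}_i \bm{\bar{X}}_i^{k+1}-\bm{E}_i\bm{Z}^{k+1})$.

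The main obstacle I expect is the last step: carefully unpacking the proximal-linearized primal optimality condition at iteration $k$ and passing to the limit along the subsequence, since the linearization uses $\nabla_{\bm{X}_i} g_i(\bm{\bar{X}}^k_i, \bm{\lambda}_i^k, \rho_{\underline{k}})$ rather than $\nabla_{\bm{X}_i} g_i(\bm{\bar{X}}^{k+1}_i, \bm{\lambda}_i^{k+1}, \rho_{\underline{k}})$, so I must use continuity of the gradients (\textbf{Proposition} 2) together with $\bm{\bar{X}}^{k+1}-\bm{\bar{X}}^k \to 0$ and $\bm{\lambda}^{k+1}-\bm{\lambda}^k \to 0$ to swap the linearization point with the limiting iterate. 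Once that substitution is justified, the proximal term $c_i^k(\bm{\bar{X}}_i^{k+1}-\bm{\bar{X}}_i^k)$ vanishes in the limit and the remaining equality is precisely the stationarity condition defining a critical point of \eqref{P5}, which completes the proof.
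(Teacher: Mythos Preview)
Your proposal is correct and follows essentially the same route as the paper: invoke \textbf{Propositions}~7 and~8 to obtain $\Vert \nabla \Phi_{\beta_{k+1}}(\bm{W}^{k+1})\Vert \to 0$, then read off the four KKT conditions from the components of $\nabla \Phi$. The paper's proof is simply a more compressed version of your outline; in particular, the ``obstacle'' you anticipate (swapping the linearization point $\bm{\bar{X}}^k_i$ for $\bm{\bar{X}}^{k+1}_i$ in the primal stationarity condition) is already absorbed into \textbf{Proposition}~4 and hence into the bound of \textbf{Proposition}~8, so once you have $\nabla_{\bm{\bar{X}}}\Phi_{\beta_{k+1}}(\bm{W}^{k+1})\to 0$ and $\bm{\bar{X}}^{k+1}-\bm{\bar{X}}^k\to 0$ you immediately get $\nabla_{\bm{\bar{X}}}\mathbb{L}_{\rho_{\underline{k}}}(\bm{W}^{k+1})\to 0$ without further unpacking.
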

\begin{proof}
	
	Based on \textbf{\emph{Proposition}} \ref{prop12} and \textbf{\emph{Proposition}} \ref{prop13}, we have
	\begin{equation} \label{pp:21}
	 \lim_{k \rightarrow +\infty} \big \Vert  \nabla \Phi_{\beta_{k+1}} (\bm{W}^{k+1})  \big \Vert \rightarrow 0.
	\end{equation}
	 
	 Specifically, we have 
	 \begin{equation} \label{pp:22}
	 	\begin{split}
	 	\nabla_{\bm{\bar{X}}} \Phi_{\beta_{k+1}}  (\bm{W}^{k+1})=&\!\nabla_{\bm{\bar{X}}} \mathbb{L}_{\rho_{\underline{k}}} (\bm{\bar{X}}^{k+1}, \bm{Z}^{k+1}, \bm{\lambda}^{k+1}, \bm{\mu}^{k+1})\\
	 	&+2 \beta_{k+1} (\bm{\bar{X}}^{k+1}-\bm{\bar{X}}^k)\\
	 \nabla_{\bm{Z}} \Phi_{\beta_{k+1}}  (\bm{W}^{k+1})=&\!\nabla_{\bm{Z}} \mathbb{L}_{\rho_{\underline{k}}} (\bm{\bar{X}}^{k+1},  \bm{Z}^{k+1}, \bm{\lambda}^{k+1}, \bm{\mu}^{k+1})\\
	  &\quad  +\mathcal{N}_{\mathcal{\bm{X}}}(\bm{Z}^{k+1})\\
	 	\nabla_{\bm{\gamma}} \Phi_{\beta_{k+1}}  (\bm{W}^{k+1})=&\!\nabla_{\bm{\gamma}} \mathbb{L}_{\rho_{\underline{k}}} (\bm{\bar{X}}^{k+1}, \bm{Z}^{k+1}, \bm{\lambda}^{k+1}, \bm{\mu}^{k+1})\\
	 		 	\nabla_{\bm{U}} \Phi_{\beta_{k+1}}  (\bm{W}^{k+1})=&2 \beta_{k+1} (\bm{\bar{X}}^{k+1}-\bm{\bar{X}}^k)\\
	 	\end{split}
	 \end{equation}
	 
	 By combining \eqref{pp:21} with \eqref{pp:22}, we have 
	 \begin{equation*}
	 \begin{split}
	 	& \lim_{k \rightarrow +\infty} \nabla_{\bm{\bar{X}}}~\mathbb{L}_{\rho_{\underline{k}}} (\bm{\bar{X}}^{k+1}, \bm{Z}^{k+1}. \bm{\lambda}^{k+1}, \bm{\mu}^{k+1})=0. \\
		&\lim_{k \rightarrow +\infty} \nabla_{\bm{Z}}~\mathbb{L}_{\rho_{\underline{k}}} (\bm{\bar{X}}^{k+1}, \bm{Z}^{k+1}, \bm{\lambda}^{k+1}, \bm{\mu}^{k+1})+\mathcal{N}_{\mathcal{\bm{X}}}(\bm{Z}^{k+1})=0. \\
		&\lim_{k \rightarrow +\infty}  h_i(\bm{\bar{X}_i}^{k+1})=0, ~\forall i \in \mathcal{N}. \\
		&\lim_{k \rightarrow +\infty} \big(\bm{A}_i \bm{\bar{X}}^{k+1}_i-\bm{E}_i \bm{Z}^{k+1} \big)=0, ~\forall i \in \mathcal{N}. \\
		\end{split}
	 \end{equation*}
	 	
	 	According to \textbf{Definition} \ref{Def2}, the  above implies that PLDM will converge to the critical points of problem (\ref{P4}).
	  
\end{proof}

\begin{theorem} \label{theorem2}
	 The critical points  of problem \eqref{P5} obtained from \textbf{Algorithm} \ref{CADMM}  are the $\epsilon$-critical points of problem \eqref{P4}. 
\end{theorem}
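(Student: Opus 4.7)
The plan is to exploit the relationship between problems \eqref{P4} and \eqref{P5}: the only structural difference is that the hard constraint $\bm{Y}_i=\bm{0}$ in \eqref{P4} has been softened in \eqref{P5} by the penalty term $M_i\|\bm{Y}_i\|^2$. By Theorem~\ref{theorem1}, the limit point $(\bm{\bar{X}}^*,\bm{Z}^*,\bm{\lambda}^*,\bm{\mu}^*)$ produced by Algorithm~\ref{CADMM} satisfies the KKT system of \eqref{P5} exactly. I would therefore compare this KKT system with the one for \eqref{P4} and argue that the mismatch is controlled by $1/M_i$, which by assumption (A4) can be made smaller than any prescribed $\epsilon>0$.

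First I would partition the stationarity condition $\nabla_{\bm{\bar{X}}_i}\mathbb{L}_{\rho_{\underline{k}}}=\bm{0}$ along the block structure $\bm{\bar{X}}_i=(\bm{X}_i,\bm{Y}_i)$. The $\bm{Y}_i$-block, using $\bm{A}_i=[\bm{I}_{\bar{n}_i},\bm{I}_{\bar{n}_i}]$, reduces to
\begin{equation*}
2M_i\bm{Y}_i^{*}+\bm{\mu}_i^{*}+\rho_{\underline{k}}(\bm{X}_i^{*}+\bm{Y}_i^{*}-\bm{E}_i\bm{Z}^{*})=\bm{0}.
\end{equation*}
Combined with the primal feasibility $\bm{A}_i\bm{\bar{X}}_i^{*}=\bm{E}_i\bm{Z}^{*}$ (which kills the $\rho_{\underline{k}}$ term) and the dual bound from (A6), this yields $\|\bm{Y}_i^{*}\|\le M_{\bm{\mu}_i}/(2M_i)$. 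Choosing $M_i$ sufficiently large (as permitted by (A4)) makes $\|\bm{Y}_i^{*}\|\le\epsilon$ for every $i\in\mathcal{N}$.

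Second, I would construct an explicit candidate KKT triple for problem \eqref{P4} at the same primal point, by appending a new multiplier $\bm{\nu}_i^{*}:=2M_i\bm{Y}_i^{*}$ for the dropped constraint $\bm{Y}_i=\bm{0}$, and keeping $(\bm{\lambda}^{*},\bm{\mu}^{*})$ for the remaining constraints. Since the objective and the remaining constraints of \eqref{P4} and \eqref{P5} coincide once the $M_i\|\bm{Y}_i\|^2$ term is re-interpreted as the Lagrangian contribution $(\bm{\nu}_i^{*})^T\bm{Y}_i$, the stationarity conditions with respect to $\bm{X}_i$, $\bm{Y}_i$ and $\bm{Z}$ for \eqref{P4} are satisfied exactly. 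The only violated piece is the primal feasibility $\bm{Y}_i=\bm{0}$, whose residual is bounded by $\epsilon$ from the previous step. According to Definition~\ref{Def2} (extended to $\epsilon$-critical points by replacing $\bm{0}$ on the right-hand sides by $\epsilon$), this certifies that $(\bm{\bar{X}}^{*},\bm{Z}^{*},\bm{\lambda}^{*},\bm{\mu}^{*},\bm{\nu}^{*})$ lies in $\textrm{crit}_\epsilon(\text{problem }\eqref{P4})$.

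The main obstacle is verifying that every component of the $\epsilon$-KKT residual (stationarity, primal feasibility, dual feasibility) is simultaneously bounded by $\epsilon$ with a single, explicit threshold on $M_i$. The stationarity parts transfer exactly, so the binding quantity is the primal residual $\|\bm{Y}_i^{*}\|$; provided (A6) gives a uniform bound on the dual sequence, the lower bound $M_i\ge M_{\bm{\mu}_i}/(2\epsilon)$ is sufficient. A minor subtlety is that the dual bound $M_{\bm{\mu}_i}$ itself may grow with $M_i$ in principle, so I would also note (or check in passing) that the boundedness in (A6) is uniform in the penalty parameter, ensuring the argument is not circular.
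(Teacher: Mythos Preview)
Your proof is correct and actually more direct than the paper's. The paper bounds $\|\bm{Y}_i^*\|$ via the Lyapunov descent of Proposition~\ref{prop11}: since $\Phi_{\beta_k}(\bm{W}^k)$ is nonincreasing and at the limit point all constraint residuals vanish, one gets $\sum_i M_i\|\bm{Y}_i^*\|^2 \le \Phi_{\beta_0}(\bm{W}^0)-\sum_i\tilde f_i(\bm{X}_i^*)-\sum_i\phi_i(\bm{X}_i^*)$, whence $\|\bm{Y}_i^*\|\le\epsilon$ for $M_i$ large. You instead read the $\bm{Y}_i$-block of the stationarity condition directly and pair it with the dual bound $M_{\bm{\mu}_i}$ from (A6), obtaining the sharper and explicit estimate $\|\bm{Y}_i^*\|\le M_{\bm{\mu}_i}/(2M_i)$. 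Your route also makes the $\epsilon$-KKT certificate for \eqref{P4} explicit by exhibiting the multiplier $\bm{\nu}_i^*=2M_i\bm{Y}_i^*=-\bm{\mu}_i^*$, whereas the paper simply asserts that bounding $\|\bm{Y}_i^*\|$ suffices. The circularity concern you raise (whether $M_{\bm{\mu}_i}$ is uniform in $M_i$) is legitimate and applies equally to the paper's argument, since $\Phi_{\beta_0}(\bm{W}^0)$ contains $\sum_i M_i\|\bm{Y}_i^0\|^2$; neither proof resolves it, but you at least flag it.
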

\begin{proof}
	We first denote the critical points of problem \eqref{P5} as $\bm{W}^*=\big( \bm{\bar{X}}^*,  \!\bm{Z}^*, \! \bm{\lambda}^*, \! \bm{\mu}^* , \bm{\bar{X}}^*\big)$.
	Based on \textbf{Theorem} \ref{theorem1}, we have $\bm{W}^k \rightarrow \bm{W}^*$ with $k \rightarrow +\infty$.
		Based on the definition of $\epsilon$-critical points (see \textbf{Definition} 2),  we only need to prove  $\Vert \bm{Y}_i^{*} \Vert \leq \epsilon$ to illustrate the results.
	 
	 This can be illustrated in twofolds.
	 
	  \emph{i)} according to \emph{\textbf{proposition}} 6, we have  $\Phi_{\beta_k} (\bm{W}^{k})$ is non-increasing  w.r.t the iteration $k$. Therefore, we have $\Phi_{\bar{\beta}} (\bm{W}^*)  \leq \Phi_{\beta_0} (\bm{W}^0)$ with $k \in \mathbb{N}$ (bounded), where we denote $\beta_k \rightarrow \bar{\beta}$ with $k \rightarrow +\infty$.
	 
	 \emph{ii)} Based on \textbf{Theorem} \ref{theorem1}, we have $\Phi_{\bar{\beta}} (\bm{W}^*)=\sum_{i=1}^N \tilde{f}_i(\bm{X}^*_i) +\sum_{i=1}^N \phi_i (\bm{X}^*_i )+\sum_{i=1}^N M_i \Vert \bm{Y}^*_i\Vert^2 $ as $h_i(\bm{\bar{X}_i}^{*})=0$ and $\big(\bm{A}_i \bm{\bar{X}}^{*}_i-\bm{E}_i \bm{Z}^{*} \big)=\bm{0}$ ($\forall i \in \mathcal{N}$).   
	 
	 Thus by combining \emph{i)} and \emph{ii)}, we imply $$\Vert \bm{Y}^*_i \Vert \leq \sqrt{ \frac{ \Phi_{\beta_0} (\bm{W}^0) -\sum_{i=1}^N\tilde{f} (\bm{X}^*_i)- \sum_{i=1}^N  \phi_i (\bm{X}^*_i) }{ \min\limits_{i} M_i \cdot N} } \leq \epsilon,$$  $~\forall i \in \mathcal{N}$ with $M_i$ sufficiently large.
	 

\end{proof}

\subsection{Convergence Rate}
With the  global convergence of  the PLDM studied, this section discusses the local convergence rate of the method. Before we present the main results, we illustrate the following propositions that  referred  to.

\begin{proposition} \label{prop16}
Let $\{ \bm{W}^k \}_{k \in \mathbb{N}, k \geq \underline{k}} $ be the sequance generated by  \textbf{Algorithm} \ref{CADMM}, then we have
\begin{equation} 
\begin{split}
\Big [\textrm{dist} &\big( \nabla \Phi_{\beta_{k+1}}( \bm{W}^{k+1}), 0\big) \Big ]^2\\
& \!\leq \!{2N}{ \nu_k } \big( \Phi_{\beta_k}( \bm{W}^k)-\Phi_{\beta_{k+1}}( \bm{W}^{k+1}) \big)
\end{split}
\end{equation}
provided with the step-size $c^k_i$ and the parameter $\beta_{k}$ satisfying  \textbf{Condition} (b):\\
$$  \nu_k=\frac{(b^k_4)^2}{b^k_2} \leq \frac{(b^k_3)^2}{b^k_1} $$
where $\nu_k$ is the positive parameter that need to be decided for \textbf{Algorithm} \ref{CADMM}.
\end{proposition}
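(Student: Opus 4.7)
The plan is to combine the upper bound on the subgradient norm of the Lyapunov function from \textbf{Proposition} 8 with the lower bound on the Lyapunov decrease from \textbf{Proposition} 6, using \textbf{Condition} (b) to relate the two sets of coefficients. The resulting ``relative error'' type inequality is exactly the bridge needed afterwards for the KL-based local convergence rate argument.

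First, I would take the upper bound from \textbf{Proposition} 8,
\begin{equation*}
\Vert \nabla \Phi_{\beta_{k+1}}(\bm{W}^{k+1}) \Vert \leq b_3^k \sum_{i=1}^N \Vert \bm{\bar{X}}^{k+1}_i-\bm{\bar{X}}^k_i \Vert + b_4^k \sum_{i=1}^N \Vert \bm{\bar{X}}^k_i - \bm{\bar{X}}^{k-1}_i \Vert,
\end{equation*}
square both sides and apply $(a+b)^2 \leq 2(a^2+b^2)$ to separate the two contributions. A Cauchy--Schwarz step $\bigl(\sum_{i=1}^N \Vert \bm{v}_i\Vert\bigr)^2 \leq N\sum_{i=1}^N \Vert \bm{v}_i\Vert^2$ then collects the per-agent norms into the block norms, so that
\begin{equation*}
[\textrm{dist}(\nabla \Phi_{\beta_{k+1}}(\bm{W}^{k+1}), 0)]^2 \leq 2N (b_3^k)^2 \Vert \bm{\bar{X}}^{k+1}-\bm{\bar{X}}^k \Vert^2 + 2N (b_4^k)^2 \Vert \bm{\bar{X}}^k - \bm{\bar{X}}^{k-1} \Vert^2.
\end{equation*}

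Second, I would factor each squared coefficient as $(b_3^k)^2 = \bigl((b_3^k)^2/b_1^k\bigr)\cdot b_1^k$ and $(b_4^k)^2 = \bigl((b_4^k)^2/b_2^k\bigr)\cdot b_2^k$, which is admissible because $b_1^k, b_2^k > 0$ under the same step-size and $\beta_k$ choice that triggers \textbf{Condition} (a) of \textbf{Proposition} 7. By \textbf{Condition} (b), both ratios $(b_3^k)^2/b_1^k$ and $(b_4^k)^2/b_2^k$ are dominated by $1/\nu_k$, so pulling this common factor out yields
\begin{equation*}
[\textrm{dist}(\nabla \Phi_{\beta_{k+1}}(\bm{W}^{k+1}), 0)]^2 \leq \frac{2N}{\nu_k} \bigl( b_1^k \Vert \bm{\bar{X}}^{k+1}-\bm{\bar{X}}^k \Vert^2 + b_2^k \Vert \bm{\bar{X}}^k-\bm{\bar{X}}^{k-1} \Vert^2 \bigr).
\end{equation*}
Applying \textbf{Proposition} 6 to the bracket on the right then gives the stated inequality directly.

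The main obstacle is not the algebra, which is essentially a two-line combination, but aligning the parameter conditions consistently: the step-size $c_i^k$ and Lyapunov weight $\beta_k$ need to be chosen so that \textbf{Condition} (a) and \textbf{Condition} (b) hold simultaneously (the former guaranteeing strictly positive $b_1^k, b_2^k$ and the decrease estimate of \textbf{Proposition} 6, the latter guaranteeing a single scalar $\nu_k$ that uniformly dominates the two ratios across iterations). Carefully identifying a non-trivial window of $(c_i^k,\beta_k)$ where both hold, and verifying that within this window $\nu_k$ stays bounded away from $0$ uniformly in $k$, is the delicate point; after that, the desired relative-error inequality is automatic.
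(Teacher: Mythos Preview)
Your overall strategy is exactly the paper's: square the estimate of \textbf{Proposition}~8, use $(a+b)^2\le 2(a^2+b^2)$ together with Cauchy--Schwarz in the form $\bigl(\sum_{i=1}^N\|\bm{v}_i\|\bigr)^2\le N\sum_{i=1}^N\|\bm{v}_i\|^2$, and then compare coefficients with the decrease bound of \textbf{Proposition}~6 via \textbf{Condition}~(b).

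There is, however, a slip in how you invoke \textbf{Condition}~(b). You assert that ``both ratios $(b_3^k)^2/b_1^k$ and $(b_4^k)^2/b_2^k$ are dominated by $1/\nu_k$'', which produces the prefactor $2N/\nu_k$ in your displayed bound rather than the $2N\nu_k$ appearing in the statement. What is actually needed---and what the Remark following \textbf{Proposition}~9 makes explicit when it enforces $\nu_k\ge (b_3^k)^2/b_1^k$---is that both ratios are dominated by $\nu_k$ itself:
\[
\frac{(b_3^k)^2}{b_1^k}\le \nu_k,\qquad \frac{(b_4^k)^2}{b_2^k}=\nu_k.
\]
With this reading, your factorization gives
\[
2N(b_3^k)^2\,\|\bm{\bar X}^{k+1}-\bm{\bar X}^k\|^2+2N(b_4^k)^2\,\|\bm{\bar X}^{k}-\bm{\bar X}^{k-1}\|^2
\le 2N\nu_k\bigl(b_1^k\|\bm{\bar X}^{k+1}-\bm{\bar X}^k\|^2+b_2^k\|\bm{\bar X}^{k}-\bm{\bar X}^{k-1}\|^2\bigr),
\]
and \textbf{Proposition}~6 then closes the argument exactly as in the paper. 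Your closing paragraph on simultaneously enforcing \textbf{Condition}~(a) and \textbf{Condition}~(b) is on point and mirrors the parameter-selection discussion in the Remark.
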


\begin{remark}
	\textbf{Proposition} 9 gaps the subgradients of  the Lyapunov function $\Phi_{\beta_{k+1}}( \bm{W}^{k+1})$  in terms of  its value decrease over successive iterations. To guarantee \textbf{Condition} (b), the step-size $c^k_i$ and  the parameter $\beta_k$  can be selected following the procedures below:
	
	Based on \textbf{Proposition} \ref{prop11} and \textbf{Proposition} \ref{prop13}, we have
	\begin{displaymath}
	\begin{split}
	&b^k_1=\min \limits_{i} \Big \{\frac{1}{2} (c_i^k-L_{g_i})- \frac{2 (\Omega^i_1)^2}{ \rho_{\underline{k}} }-\beta_{k+1} \Big\} \\
	&\quad \quad=\frac{1}{2} c^k_i-\max \limits_i \Big\{ \frac{1}{2} L_{g_i})+\frac{2 (\Omega^i_1)^2}{ \rho_{\underline{k}} }+\beta_{k+1}  \Big\}  \\
	&\quad \quad=\frac{1}{2} c^k_i-M \\
		\end{split}
	\end{displaymath}
	\begin{displaymath}
	\begin{split}
	&b^k_2=\min \limits_{i} \Big \{\beta_k-\frac{2 (\Omega^i_2)^2}{ \rho_{\underline{k}} } \Big\}\\
	&b^k_3=\max \limits_{i} \Big\{L_{g_i} \!+\!c^k_i\!+\!\Omega^i_1 B+4\beta_{k+1}\!+\!\rho_{\underline{k}}\!+\!\frac{\Omega^i_1}{\rho_{\underline{k}}} \Big\}\\
	&\quad \quad=c_i^k+\max \limits_{i} \Big\{ L_{g_i}\!+\!\Omega^i_1 B+4\beta_{k+1}\!+\!\rho_{\underline{k}}\!+\!\frac{\Omega^i_1}{\rho_{\underline{k}}} \Big\}  \\
	&\quad \quad=c_i^k+L\\
	&b^k_4=\max \limits_{i} \big\{  \big(B +\frac{1}{\rho_{\underline{k}}}\big) \Omega^i_2 \big\}
	\end{split}
	\end{displaymath}
	where we have $M=\max \limits_i \Big\{ \frac{1}{2} L_{g_i}+\frac{2 (\Omega^i_1)^2}{ \rho_{\underline{k}} }+\beta_{k+1}  \Big\}$ and $L=\max \limits_{i} \Big\{ L_{g_i}\!+\!\Omega^i_1 B+4\beta_{k+1}\!+\!\rho_{\underline{k}}\!+\!\frac{\Omega^i_1}{\rho_{\underline{k}}}   \Big\}$.
\end{remark}


We first  select a large enough positive parameter $\nu_k>0$.  By letting  $\nu_k=\frac{(b^k_4)^2}{b^k_2}=\frac{\max \limits_{i} \big\{  \big(B +\frac{1}{\rho_{\underline{k}}}\big)^2 (\Omega^i_2)^2 \big\}}{\min \limits_{i} \Big \{\beta_k-\frac{2 (\Omega^i_2)^2}{ \rho_{\underline{k}} } \Big\}}$, we can set
\begin{equation*}
\begin{split}
\beta_k&=2\Big(  \frac{(B+\frac{1}{\rho_{\underline{k}}})^2}{\nu_k}+\frac{1}{\rho_{\underline{k}}} \Big) \max_i \big\{ (\Omega^i_2)^2\big\}\\
&=2\frac{(1+B\rho_{\underline{k}})^2+\nu_k}{\nu_k \rho_{\underline{k}}} \max_i \big\{ (\Omega^i_2)^2\big\}.
\end{split}
\end{equation*}
Besides, if we select $\nu_k=\nu_{\underline{k}}$, we have 
$\beta_k=\beta_{\underline{k}}~~\forall k \geq \underline{k}.$

 Afterwards, by letting $\nu_k \geq (b^k_3)^2/b^k_1$, i.e.,   
 \begin{equation}
\begin{split}
\nu_k \geq \frac{(c^k_i+L)^2}{\frac{1}{2} c^k_i-M},   
\end{split}
 \end{equation}
we can select the step-size $c^k_i$ at each iteration $k$ as
 \small $c^k_i \in \Big[  \frac{(\nu_k-4M)-\sqrt{(\nu_k)^2-(8M+16L) \nu_k}}{4},  $ $ \frac{(\nu_k-4M)+\sqrt{(\nu_k)^2-(8M+16L) \nu_k}}{4} \Big] $ $ \cap (0, +\infty)$. \normalsize Besides, to ensure that at least one positive step-size exists,  the parameter $\nu_k$ should be selected as $\nu_k \geq (8M+16L) $ ($\forall k \geq \underline{k}$).





\begin{theorem} \label{theorem2}
	(\textbf{Convergence rate}) Suppose  the Lynapunov function $ \Phi_{\beta_k} (\bm{W}^{k})$ satisfy the KL property\footnote{Note that KL property is general for most analytical functions
	} with  the desingularising function of  the form   $\varphi(t)=\frac{C}{\theta} t^{\theta}$ for some $C>0$, $\theta\in (0, 1]$  (\cite{li2015accelerated}) and  $c^k_i$, $\beta_k$  are selected satisfying  \textbf{Condition} (b) in \textbf{Proposition} 9,
we have:
	\begin{itemize}
		\item [{\em{(i)}}] If $\theta=1$, \textbf{Algorithm} \ref{CADMM} terminates in finite iterations. 
		\item [{\em{(ii)}}] If $\theta \in (0, 1/2)$,  $\exists~ k_0, k_1, k_2 \in \mathbb{R}$, such that
		
		$\forall \max \{k_0, k_1\} \leq k \leq k_2$, we have $$\Delta^k \leq  \Big(\frac{2N C^2\nu_{k-1}}{1+2N C^2\nu_{k-1}} \Big)^{\frac{1}{1+2\theta}} \Delta^{k-1}.$$

		$\forall k \geq \max \{k_0, k_1, k_2\}$, we have 
		$$ \Delta^k  \leq \big ( \frac{2N C^2\nu_{k-1}}{1+2N C^2\nu_{k-1}} \big)^{\frac{1}{2-2\theta}}  \big(\Delta^{k-1}\big)^{\frac{1}{2-2\theta}}. $$
		
		\item [{\em{(iii)}}] If $\theta \in [1/2, 1)$,   $ \exists~k_3 \in \mathbb{N}$, 
		$\forall k \geq k_3$, such that  $$ \Delta^k \leq \frac{2N C^2\nu_{k-1}}{1+2N C^2\nu_{k-1}} \cdot \Delta^{k-1}$$
	\end{itemize}
where $\Delta^k =\Phi_{\beta_k}( \bm{W}^k)-\Phi^*$ denotes the ``suboptimality" (critial points or KKT points)  at iteration $k$, with $\Phi^*$ as the ``optimal" value of the Lynapunov function.

	
\end{theorem}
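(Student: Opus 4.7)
The plan is to combine the KL property of $\Phi_{\beta_{k}}$ with the subgradient bound from \textbf{Proposition}~\ref{prop16} to collapse everything into a scalar recurrence in $\Delta^{k}$, and then to read off the three rate regimes by analyzing that recurrence in different ranges of $\Delta^{k+1}$. With $\varphi(t)=\tfrac{C}{\theta}t^{\theta}$ one has $\varphi'(t)=Ct^{\theta-1}$, so the KL inequality applied at the limit critical point gives
\begin{equation*}
\bigl[\textrm{dist}\bigl(0,\nabla\Phi_{\beta_{k+1}}(\bm{W}^{k+1})\bigr)\bigr]^{2}\ \geq\ C^{-2}(\Delta^{k+1})^{2(1-\theta)}.
\end{equation*}
Combining this with \textbf{Proposition}~\ref{prop16} yields the master inequality
\begin{equation*}
(\Delta^{k+1})^{2(1-\theta)}+\alpha_{k}\,\Delta^{k+1}\ \leq\ \alpha_{k}\,\Delta^{k},\qquad \alpha_{k}:=2NC^{2}\nu_{k},
\end{equation*}
which I will call $(\star)$. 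The three cases are obtained by bounding the exponentiated term against the linear one in regime-dependent ways.

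For case (i), $\theta=1$ collapses the exponent $2(1-\theta)$ to $0$, so $(\star)$ becomes $\Delta^{k}-\Delta^{k+1}\geq 1/\alpha_{k}$. Because $\{\Delta^{k}\}$ is non-increasing (by \textbf{Proposition}~\ref{prop11}) and bounded below by $0$, a uniformly positive decrement can occur only for finitely many $k$, and the algorithm terminates in finite iterations. For case (iii), $\theta\in[1/2,1)$ gives $2(1-\theta)\leq 1$, and $\Delta^{k}\to 0$ from \textbf{Theorem}~\ref{theorem1}, so there is a $k_{3}$ after which $\Delta^{k+1}\leq 1$; then $(\Delta^{k+1})^{2(1-\theta)}\geq \Delta^{k+1}$, and substituting into $(\star)$ yields $(1+\alpha_{k})\Delta^{k+1}\leq \alpha_{k}\Delta^{k}$, which is the claimed $Q$-linear rate.

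Case (ii), $\theta\in(0,1/2)$, is the interesting one because $2(1-\theta)>1$, so $(\star)$ does not admit a single uniform inversion. The strategy is to partition the iteration history according to whether $\Delta^{k+1}$ lies below or above the cross-over value $1$. In the tail range (all $k\geq k_{2}$ with $\Delta^{k+1}\leq 1$), the inequality $(\Delta^{k+1})^{2(1-\theta)}\leq \Delta^{k+1}$ combined with $\alpha_{k}\Delta^{k+1}\geq \alpha_{k}(\Delta^{k+1})^{2(1-\theta)}$ upgrades $(\star)$ to $(1+\alpha_{k})(\Delta^{k+1})^{2(1-\theta)}\leq \alpha_{k}\Delta^{k}$; raising to the power $1/(2-2\theta)$ produces the second sub-bound. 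For the intermediate range (after the sufficient decrease of \textbf{Proposition}~\ref{prop11} has kicked in but before $\Delta^{k}$ crosses under $1$), I would derive the parallel inequality $(1+\alpha_{k})(\Delta^{k+1})^{1+2\theta}\leq \alpha_{k}\Delta^{k}$ by lower-bounding both $(\Delta^{k+1})^{2-2\theta}$ and $\Delta^{k+1}$ by $(\Delta^{k+1})^{1+2\theta}$; raising to $1/(1+2\theta)$ and using $\Delta^{k-1}\geq 1$ to absorb the fractional exponent on the right-hand side then gives the first sub-bound. The indices $k_{0},k_{1},k_{2}$ are read off as the first iterations after which, respectively, the sufficient-decrease property takes hold, the $(1+2\theta)$-upgrade becomes valid, and the tail regime $\Delta^{k+1}\leq 1$ begins.

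The main obstacle will be the intermediate range in case (ii): the ordering of $b^{1+2\theta}$ and $b^{2-2\theta}$ for $b\leq 1$ reverses at $\theta=1/4$, so the choice of which power to exploit in $(\star)$ must be tied carefully to the range of $\Delta^{k+1}$ and of $\Delta^{k-1}$, and the rigorous construction of the transition indices $k_{0},k_{1},k_{2}$ from the limiting behavior of $\{\Delta^{k}\}$ requires bookkeeping that does not follow mechanically from $(\star)$ alone.
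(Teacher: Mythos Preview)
Your overall architecture matches the paper: derive the master inequality by combining the squared KL inequality at $\bm{W}^{k+1}$ with \textbf{Proposition}~\ref{prop16}, and then split on $\theta$. Cases~(i) and~(iii), and the tail sub-case of~(ii), are handled exactly as in the paper.

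The intermediate sub-case of~(ii) has a genuine error, not just a bookkeeping obstacle. You propose to reach $(1+\alpha_k)(\Delta^{k+1})^{1+2\theta}\le \alpha_k\Delta^{k}$ from $(\star)$ by ``lower-bounding both $(\Delta^{k+1})^{2-2\theta}$ and $\Delta^{k+1}$ by $(\Delta^{k+1})^{1+2\theta}$''. The second of these fails throughout the intermediate range: for $\Delta^{k+1}>1$ and any $\theta>0$ one has $1<1+2\theta$, hence $\Delta^{k+1}<(\Delta^{k+1})^{1+2\theta}$, the reverse of what you need. The cross-over at $\theta=1/4$ that you flag concerns only the comparison of $2-2\theta$ with $1+2\theta$; the comparison of $1$ with $1+2\theta$ goes the wrong way for every $\theta\in(0,1/2)$, so your argument cannot be repaired by sharpening the exponent analysis.

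The paper's fix is a single extra algebraic step: multiply $(\star)$ through by $(\Delta^{k+1})^{2\theta}$ before splitting. This turns $(\star)$ into
\[
(\Delta^{k+1})^{2}+\alpha_k(\Delta^{k+1})^{1+2\theta}\ \le\ \alpha_k\,\Delta^{k}\,(\Delta^{k+1})^{2\theta}.
\]
Now for $\Delta^{k+1}\ge 1$ the left side is at least $(1+\alpha_k)(\Delta^{k+1})^{1+2\theta}$ because $2>1+2\theta$, while the right side is at most $\alpha_k(\Delta^{k})^{1+2\theta}$ because $\Delta^{k+1}\le\Delta^{k}$ (monotonicity from \textbf{Proposition}~\ref{prop11}). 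Taking the $(1+2\theta)^{-1}$ power gives the first bound of case~(ii) directly, with no need for your ``absorb the fractional exponent'' step. The key point you missed is that multiplying by $(\Delta^{k+1})^{2\theta}$ plants a factor on the right-hand side that can be controlled by monotonicity, whereas your raw $(\star)$ has only $\alpha_k\Delta^{k}$ on the right and offers no such handle.
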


\begin{proof}

We define the set of critical point for the Lyapunov function $\Phi_{\beta_k}( \bm{W}^k)$ 
as $\bm{\omega}(\bm{W}^0)$ with $\bm{W}^0=(\bm{X}^0, \bm{X}^0, \bm{Z}^0, \bm{\lambda}^0, \bm{\mu}^0)$ as any given start point.  

According to \textbf{Theorem} \ref{theorem1}, we conclude that the sequence $\{\bm{W}^k\}_{k \in \mathbb{N}}$ generated by \textbf{Algorithm} \ref{CADMM} from any given  initial  point $\bm{W}^0$ will converge to the set of critical points denoted by  $\omega(\bm{W}^0) \neq  \varnothing $.

Based on \textbf{Proposition} \ref{prop12}, $\{\bm{W}^k\}_{k \in \mathbb{N}}$ is bounded (convergent), 
and for any given initial point $\bm{W}^0$,  $k\in+\infty$, there exist a limit point $\overline{\bm{W}}=(\bm{\bar{X}},  \bm{\bar{Z}}, \bm{\bar{\lambda}}, \bm{\bar{\mu}}, \bm{\bar{X}})$.

Since the Lyapunov function is continuous, we have
\begin{equation}
\begin{split}
\lim_{k \rightarrow +\infty} \Phi_{\beta_k} (\bm{W}^k)=\Phi_{\bar{\beta}} (\bm{\overline{W}})
\end{split}
\end{equation}

As illustrated, $\Phi_{\beta_k}(\bm{W}^k)$  is lower bounded, i.e., $\lim\limits_{k \rightarrow +\infty }\Phi_{\beta_k}(\bm{W}^k)\!\!>\!\!\!-\infty$  and non-increasing w.r.t the iteration $k$  (\textbf{Proposition} \ref{prop11}), thus we have 
\begin{equation}
\begin{split}
\lim_{k \rightarrow +\infty} \Phi_{\beta_k} (\bm{W}^k)=\Phi_{\bar{\beta}} (\bm{\bar{W}})=\underline{\Phi}
\end{split}
\end{equation}
where $\underline{\Phi}$ denotes the limit  of $\Phi_{\beta_k} (\bm{W}^k)$ from the initial point $\bm{W}^0$. 

The above implies that  $\Phi_{\beta_k}( \bm{W}^k)$  is constant on $\omega(\bm{W}^0)$.  

From \textbf{\textrm{Theorem}} \ref{theorem1}, we have $$\lim_{k \rightarrow +\infty} \textrm{dist}  \big (\nabla  \Phi_{\beta_k}( \bm{W}^k), 0\big)=0$$

That means $\forall \varepsilon$, $\exists k_0 \in \mathbb{N}$ that $\forall k \geq k_0$,  $$\textrm{dist} \Big(\nabla  \Phi_{\beta_k}( \bm{W}^{k}), 0\Big) \leq \epsilon$$
 
For notation,  we denote $\Phi^*$ as the limit of $\Phi_{\beta_k} (\bm{W}^k)$ on $\omega(\bm{W}^0)$.
$ \Phi_{\beta_k}( \bm{W}^k)$ is non-increasing w.r.t. the iteration $k$,  thus $\forall \eta \geq 0$, there exist $k_1 \in \mathbb{N}$ such that $\forall k \geq k_1$, we have   $$\Phi_{\beta_k}( \bm{W}^k) \leq \Phi^*+\eta$$


Further, $\forall k\geq \max\big\{k_0, k_1\big\}$, we have 
\begin{equation}
\begin{split}
\bm{W}^k \in &\Big\{ \bm{W}: \textrm{dist} \big( \nabla \Phi_{\beta_k}( \bm{W}^k), 0\big)\leq \epsilon \Big \} \\
&\quad \cap \Big\{ \bm{W}: \Phi^* <\Phi_{\beta_k}( \bm{W}^k ) < \Phi^*+\eta \Big\}=\Omega_{\varepsilon, \eta}
\end{split}
\end{equation}

Since the Lyapunov function $\Phi_{\beta_k}( \bm{W}^k)$ posess the KL property,  based on \textbf{Definition} 4, we have
\begin{equation} \label{(55)}
\begin{split}
1\leq \Big[ \varphi^{'} \big( \Phi_{\beta_k}( \bm{W}^k)-\Phi^*\big) \Big]^2 \Big [\textrm{dist} \big( 0, \nabla \Phi_{\beta_k}( \bm{W}^k) \big) \Big]^2 
\end{split}
\end{equation}

By combining (\ref{(55)}) with \textbf{Proposition} \ref{prop16},  we have 
\small
\begin{equation} \label{29}
\begin{split}
1\!\leq\!\Big[ \varphi^{'}\! \big( \Phi_{\beta_k}( \bm{W}^k)\!-\!\Phi^*\big) \Big]^2 \! {2N}{\nu_{k\!-\!1}}\Big[ \Phi_{\beta_k}( \bm{W}^{k\!-\!1})\!-\!\Phi_{\beta_{k\!+\!1}}( \bm{W}^{k}) \Big]
\end{split}
\end{equation}
\normalsize

Rearrange \eqref{29} based on  $\Delta^k =\Phi_{\beta_k}( \bm{W}^k)-\Phi^*$, we have 
\begin{equation} \label{(76)}
\begin{split}
1 &\leq [\varphi^{'}(\Delta^k)]^2 \cdot {2N}{\nu_{k-1}}\big[ \Delta^{k-1}-\Delta^{k} \big]
\end{split}
\end{equation}

Regarding the desingularising function,  we have $\varphi^{'}(t)=C t^{\theta-1}$. 
Therefore, we can derive from (\ref{(76)}) that 

$$1 \leq {2N C^2}{\nu_{k-1}} (\Delta^k )^{2(\theta-1)} \cdot \big[ \Delta^{k-1}-\Delta^{k} \big].$$

\noindent
\emph{(i)} If $\theta=1$,  we have
\begin{equation}
\begin{split}
1 \leq {2N C^2}{\nu_{k-1}} \cdot \big[ \Delta^{k-1}-\Delta^{k} \big]  \leq 0
\end{split}
\end{equation}

This above inequality is a contradiction and implies that the set $\Omega_{\varepsilon, \eta}=\varnothing$.
In other word, $\forall k \geq \max\{k_0, k_1\}$, we have $\Delta^k=0$, i.e., \textbf{Algorithm} \ref{CADMM} terminates in finite iterations. 
~\\
\noindent
\emph{(ii)} $\theta \in (0, 1/2)$, we have $1<2-2\theta<2$. 
\begin{equation}
\begin{split}
[\Delta^k]^2 &\leq {2N C^2}{\nu_{k-1}} \cdot [\Delta^{k-1}-\Delta^k] \cdot (\Delta^k)^{2\theta} \\
& \leq {2N C^2}{\nu_{k-1}} \Delta^{k-1} (\Delta^k)^{2\theta}-{2N C^2}{\nu_{k-1}} (\Delta^k)^{1+2\theta}\\
\end{split}
\end{equation}

Thus, we have
\begin{equation} \label{(71)}
\begin{split}
[\Delta^k]^2&+{2N C^2}{\nu_{k-1}}(\Delta^k)^{1+2\theta} \\
&\leq {2N C^2}{\nu_{k-1}} \Delta^{k-1} \cdot (\Delta^k)^{2\theta}\\
\end{split}
\end{equation}

Since $\Delta^k$ is non-increasing (\textbf{Proposition} \ref{prop11}),  there exist $k_3 \in \mathbb{R}$ that $\forall  k \leq k_2$, we have $\Delta^{k-1} \geq  \Delta^k\geq 1$.

In this case, it's easy to figure out that 
$$1<1+2\theta <2, ~~ 0<2\theta<1$$
$$ (\Delta^k)^2 \geq (\Delta^k)^{1+2\theta} $$ 
$$ (\Delta^k)^{2\theta} \leq (\Delta^{k-1})^{2\theta}$$

Therefore, we can derive from (\ref{(71)}) that 
\begin{equation*} 
\begin{split}
&(1+{2N C^2}{\nu_{k-1}}) (\Delta^k) ^{1+2\theta}  \leq \\
&\ [\Delta^k]^2+{2N C^2}{\nu_{k-1}}(\Delta^k)^{1+2\theta}  \leq {2N C^2}{\nu_{k-1}} \Delta^{k-1} (\Delta^k)^{2\theta} \\
&\quad \quad \quad  \leq {2N C^2}{\nu_{k-1}}(\Delta^{k-1})^{1+2\theta}\\
\end{split}
\end{equation*}
In this case,  $\forall \max \{k_0, k_1\} \leq k \leq k_2$, we have 
\begin{equation}
\begin{split}
\Delta^k \leq  \Big(\frac{2N C^2 \nu_{k-1}}{1+2N C^2\nu_{k-1}} \Big)^{\frac{1}{1+2\theta}} \Delta^{k-1}
\end{split}
\end{equation}
This implies \textbf{Algorithm} \ref{CADMM} shows linear convergence rate. 

As mentioned, $\forall k \geq k_2$, we have $ 0 \leq \Delta_k <1$. In this case, it's straight forward that 
$$1<1+2\theta <2$$
$$ (\Delta^k)^2 \leq  (\Delta^k)^{1+2\theta} $$ 

Similarly, we can derive from (\ref{(71)}) that 
\begin{equation} 
\begin{split}
&(1+{2N C^2}{\nu_{k-1}}) (\Delta^k) ^{2}   \leq [\Delta^k]^2+{2N C^2}{\nu_{k-1}}(\Delta^k)^{1+2\theta}\\
&\quad  \quad \quad  \leq {2N C^2}{\nu_{k-1}} \Delta^{k-1} (\Delta^k)^{2\theta}\\
\end{split}
\end{equation}
This implies that 
\begin{equation} 
\begin{split}
&(1+{2N C^2}{\nu_{k-1}}) (\Delta^k) ^{2-2\theta} \leq {2N C^2}{\nu_{k-1}} \Delta^{k-1} \\
\end{split}
\end{equation}
Equivalently, $\forall k \geq \max \{k_0, k_1, k_2\}$, we have 
\begin{equation} 
\begin{split}
& \Delta^k  \leq \Big ( \frac{2N C^2 \nu_{k-1}}{1++2N C^2\nu_{k-1}} \Big)^{\frac{1}{2-2\theta}}  \big(\Delta^{k-1}\big)^{\frac{1}{2-2\theta}} \\
\end{split}
\end{equation}
~\\
\noindent
\emph{(iii)} $\theta \in [1/2, 1)$, we have $0\!<\!2-2\theta\!\leq\!1$. As $\Delta^k$ is non-increasing,  there exist  $  k_3 \in \mathbb{R}$ that 
 $\forall k \geq k_3$, we have $$(\Delta^k)^{2(1-\theta)} \geq \Delta^k$$

Therefore, $\forall k \geq \max \{ k_0, k_1, k_3\}$, we have 
$$  \Delta^k \leq {2N C^2}{\nu_{k-1}} \cdot \big(\Delta^{k-1}-\Delta^{k} \big) $$
i.e.,
$$  \Delta^k \leq \frac{2N C^2\nu_{k-1}}{1+2N C^2\nu_{k-1}} \cdot \Delta^{k-1} $$
In this case,  \textbf{Algorithm} \ref{CADMM} presents linear convergence rate.

\end{proof}

According to \textbf{Theorem} \ref{theorem1}, the step-size should be selected to guarantee \textbf{Condition} (a). Nevertheless this is a challenging task to achieve efficiently in practice. 
To reduce computation and facilitate implementation, we can select the step-size by the backward linesearch procedures in  \textbf{Algorithm} \ref{stepsize searching}.  $\alpha$ denotes a small positive value that can be selected adaptively according to the problems. 

\begin{algorithm}
	\caption{Backward Linesearch for Stepsize } \label{stepsize searching}
	\begin{algorithmic}[1]
		\State  \textbf{Initialization:}  $c_i^0$.
		\State \textbf{Repeat:}
		\State If 
		\small
\begin{eqnarray}\label{(68)}
\begin{split}
&g_i(\bm{\bar{X}}_i^{k+1}, \bm{\lambda}^k, \rho_{k})\!+\!\alpha \Vert \bm{\bar{X}}^{k+1}_i  -\bm{\bar{X}}_i^k \Vert^2\!>\! g_i(\bm{\bar{X}}^k_i, \bm{\lambda}^k, \rho_{k}) \\
&\!+\!\langle  \nabla_{\bm{\bar{X}}_i} g_i(\bm{\bar{X}}^k_i, \!\bm{\lambda}^k, \! \rho_{k} ),  \bm{\bar{X}}^{k+1}_i\!-\!\bm{\bar{X}}_i^k  \rangle\!+\!\frac{c_i^k}{2} \big \Vert  \bm{\bar{X}}^{k+1}_i\!-\!\bm{\bar{X}}_i^k \big \Vert^2\\
\end{split}
\end{eqnarray}
\normalsize
		   set $c^{k+1}_i=c^i_k/ \delta$ and go to \textbf{Step} 2, otherwise stop.
		\State  \textbf{Output}  $c^k_i$.
	\end{algorithmic}
\end{algorithm}

\begin{corollary}  \label{remark1}
	Suppose 
	the step-size $c^k_i$ at each iteration $k$ is selected according to \textbf{Algorithm} \ref{stepsize searching}, the PLDM converges to the critical points of Problem \eqref{P5}. 
\end{corollary}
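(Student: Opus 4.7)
The plan is to reduce \textbf{Corollary} \ref{remark1} to \textbf{Theorem} \ref{theorem1} by verifying that the step-sizes produced by \textbf{Algorithm} \ref{stepsize searching} together with a suitable Lyapunov parameter $\beta_k$ satisfy \textbf{Condition} (a) of \textbf{Proposition} \ref{prop12}, with the role of $\tfrac{1}{2}(c^k_i-L_{g_i})$ in the sufficient-decrease inequality now played by the user-chosen constant $\alpha$. First I would show that the inner linesearch terminates in finitely many steps: by \textbf{Proposition} \ref{prop1}, $\nabla_{\bm{\bar{X}}_i} g_i$ is $L_{g_i}$-Lipschitz continuous over $\bm{\mathcal{\bar{X}}}^{\eta}_i$, so the descent lemma (\textbf{Lemma} \ref{lemma1}) ensures that the violation condition in \textbf{Algorithm} \ref{stepsize searching} fails as soon as $c^k_i\geq L_{g_i}+2\alpha$. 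Hence the selected step-size is bounded, $c^k_i\leq \bar{c}_i$, for some constant $\bar{c}_i=O(L_{g_i}+\alpha)$ that is independent of $k$ for $k\geq\underline{k}$, thanks to the stabilisation in \textbf{Proposition} \ref{prop2}.

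Next, combining the Armijo-type bound enforced by the linesearch with the optimality of $\bm{\bar{X}}^{k+1}_i$ for the proximal subproblem \eqref{(7)} reproduces \textbf{Proposition} \ref{prop4} verbatim with $\alpha$ in place of $\tfrac{1}{2}(c^k_i-L_{g_i})$, namely
\begin{equation*}
\mathbb{L}_{\rho_{\underline{k}}}(\bm{\bar{X}}^{k+1},\bm{Z}^{k+1},\bm{\lambda}^k,\bm{\mu}^k)\leq \mathbb{L}_{\rho_{\underline{k}}}(\bm{\bar{X}}^{k},\bm{Z}^{k},\bm{\lambda}^k,\bm{\mu}^k)-\alpha\sum_{i=1}^N\Vert\bm{\bar{X}}^{k+1}_i-\bm{\bar{X}}^k_i\Vert^2.
\end{equation*}
The subgradient bounds in \textbf{Proposition} \ref{prop5} and \textbf{Proposition} \ref{prop9} survive unchanged, with their constants now expressed in terms of $\bar{c}_i$ rather than the unknown prescribed $c^k_i$, and feeding these into the Lyapunov construction of \textbf{Proposition} \ref{prop11} yields a decrease estimate whose coefficient of $\Vert\bm{\bar{X}}^{k+1}-\bm{\bar{X}}^k\Vert^2$ becomes $\tilde{b}^k_1=\alpha-\max_i\{2(\Omega^i_1)^2/\rho_{\underline{k}}\}-\beta_{k+1}$. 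To ensure this modified form of \textbf{Condition} (a), I would pick $\alpha$ in \textbf{Algorithm} \ref{stepsize searching} larger than $\max_i\{2(\Omega^i_1)^2/\rho_{\underline{k}}\}+\beta_{k+1}$ and select $\beta_k$ in the non-empty interval $\bigl(\max_i\{2(\Omega^i_2)^2/\rho_{\underline{k}}\},\,\alpha-\max_i\{2(\Omega^i_1)^2/\rho_{\underline{k}}\}\bigr)$; this is legitimate because by \textbf{Proposition} \ref{prop2} the penalty $\rho_{\underline{k}}$ and the regularity bounds $\Omega^i_1,\Omega^i_2$ stabilise after iteration $\underline{k}$. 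With \textbf{Condition} (a) in force, \textbf{Proposition} \ref{prop12} gives vanishing successive differences and \textbf{Proposition} \ref{prop13} bounds the Lyapunov subgradient, so the argument of \textbf{Theorem} \ref{theorem1} transfers verbatim and $\{\bm{W}^k\}$ converges to the critical points of \eqref{P5}.

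The main obstacle I anticipate is the circular dependence between $\alpha$ and the penalty $\rho_{\underline{k}}$: since $L_{g_i}$ in \textbf{Proposition} \ref{prop1} grows with $\rho_{\underline{k}}$, enlarging $\rho_{\underline{k}}$ in order to shrink $2(\Omega^i_1)^2/\rho_{\underline{k}}$ simultaneously inflates the worst-case linesearch output $\bar{c}_i$ and hence the subgradient constants in \textbf{Proposition} \ref{prop13}. This is tractable because only a lower bound on $\alpha$ is required for \textbf{Condition} (a) (no matching upper bound), and \textbf{Proposition} \ref{prop2}(b) guarantees that the adaptive step eventually freezes $\rho_{\underline{k}}$ at a single value, so $\alpha$ can be tuned after this stabilisation, breaking the apparent circularity and closing the argument.
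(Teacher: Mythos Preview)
Your proposal is correct and follows essentially the same route as the paper's proof: use the Armijo-type inequality enforced by \textbf{Algorithm}~\ref{stepsize searching} together with the optimality of subproblem~\eqref{(7)} to obtain the sufficient-decrease estimate with $\alpha$ in place of $\tfrac{1}{2}(c^k_i-L_{g_i})$, and then rerun the chain \textbf{Proposition}~\ref{prop11}\,$\to$\,\textbf{Proposition}~\ref{prop12}\,$\to$\,\textbf{Theorem}~\ref{theorem1} with the modified \textbf{Condition}~(a). The paper's own proof is terser and omits the finite-termination argument for the linesearch and the discussion of the $\alpha$--$\rho_{\underline{k}}$ interdependence that you spell out, but the logical structure is identical.
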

\begin{proof}
	Refer to \textbf{Appendix} C.
\end{proof}

\section{Numeric Experiments}
This section illustrates the PLDM's performance on {\em{(i)}} a simple numerical example with 2 agents and {\em{(ii)}} the application to multi-zone HVAC control. The numerical example helps illustrate the theoretical analysis and the application demonstrates its practical capabilities.

\subsection{A Numerical Example}
We consider a nonlinear and non-convex example with \textrm{2} agents given by
\begin{equation} \label{(70)}
\begin{split}
\min_{x_1, x_2}~~ x_1&+x_2+\frac{1}{2}x_1 \cdot x_2^2+\frac{1}{2} x_1 \cdot x_2^2\\
 s. t.&~~ x_1 \cdot x_2=2. \\
&~~~x_1^2+x_2^2=5. \\
&~~~0 \leq x_1 \leq 4.\\
&~~~0 \leq x_2 \leq 5. 
\end{split}
\end{equation}

The problem is well-posed with the (local) optima  $(2, 1)$. The problem is solved   by using PLDM  in a decentralized manner and our analysis is presented here. 
\small
\begin{displaymath}
\begin{split}
	\bm{F}_i(\bm{\bar{X}}_i)&=
\left(  
\begin{array}{ccc}
\nabla_{\bm{X}_i} h_i(\bm{X}_i) &\!\! & \bm{O}_{n_i} \\
&\bm{A}_i&\\
\end{array} 
\right)	\\
&=\left(  
\begin{array}{cccc}
\bm{X}_i(2)    & \bm{X}_i(1)    & 0    & 0 \\
2\bm{X}_i(1)  & 2\bm{X}_i(2)   & 0    & 0\\
1       &      0     &  1  & 0\\
0       &      1     &  0  & 1\\
\end{array} 
\right), ~i=\{1, 2\}.
\end{split}
\end{displaymath}
\normalsize
Note that $\bm{F}_i(\bm{\bar{X}}_i)$ has row full rank and regular  with $\bm{X}_i \neq \bm{0}$.  This implies  the satisfaction of  assumption {\emph{(A4)}}, which closely related to the convergence. The trajectory of the local decision variable sequence $\{\bm{X}_1\}_{k \in \mathbb{R}}$ is shown in Fig.  \ref{figure1}.
Fig. \ref{figure3} shows the subgradients of AL function, i.e.,  $\Vert \nabla \mathbb{L}_{\rho_{k}} (\bm{\bar{X}}^k, \bm{Z}^k, \bm{\lambda}^k, \bm{\mu}^k) \Vert $ and the Lyapunov function, i.e.,  $\Vert \nabla \Phi_{\beta_k} (\bm{W}^k) \Vert$ converging to zero w.r.t the iteration $k$. This additionally demonstrates the PLDM's convergence property illustrated  in \textbf{Theorem} 1. 

In addition, we study the PLDM's convergence rate by closely inspecting the Lyapunov function   $ \Phi_{\beta_k} (\bm{W}^k)$  w.r.t  the iteration $k$. As shown in Fig. \ref{figure4},  we see that the Lyapunov function $ \Phi_{\beta_k} (\bm{W}^k)$ approximately decrease w.r.t the iteration $k$ until the optima $ \Phi^*$ is reached. 
\begin{figure}
	\centering
	\includegraphics[width=3.0 in]{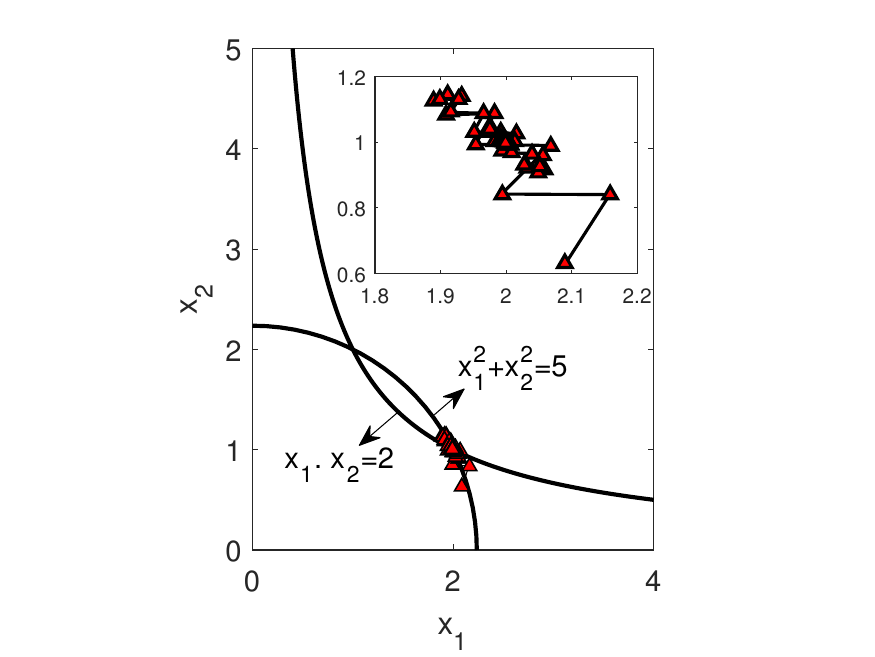}\\
 \caption{ the trajectory of the sequence $\{\bm{X}_1\}_{k \in \mathbb{N}}$ (the feasible points include $(1, 2)$ and $(2, 1)$ with the (local) optima $(2, 1)$).} \label{figure1} 
\end{figure}

\begin{figure}[htbp]
	\centering
	\subfigure[]{
		\begin{minipage}[t]{0.5\linewidth}
			\centering
			\includegraphics[width=1.8 in, height=1 in]{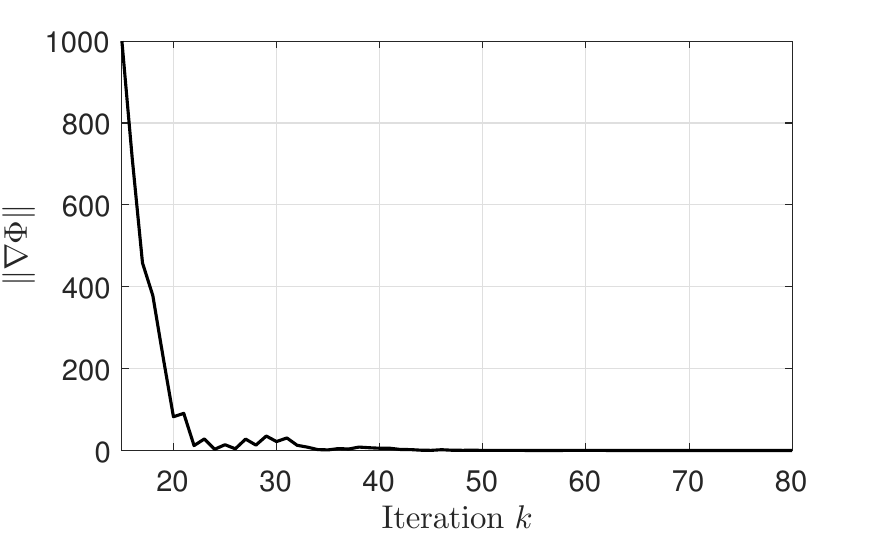}
		\end{minipage}%
	}%
	\subfigure[]{
		\begin{minipage}[t]{0.5\linewidth}
			\centering
			\includegraphics[width=1.8 in, height=1 in]{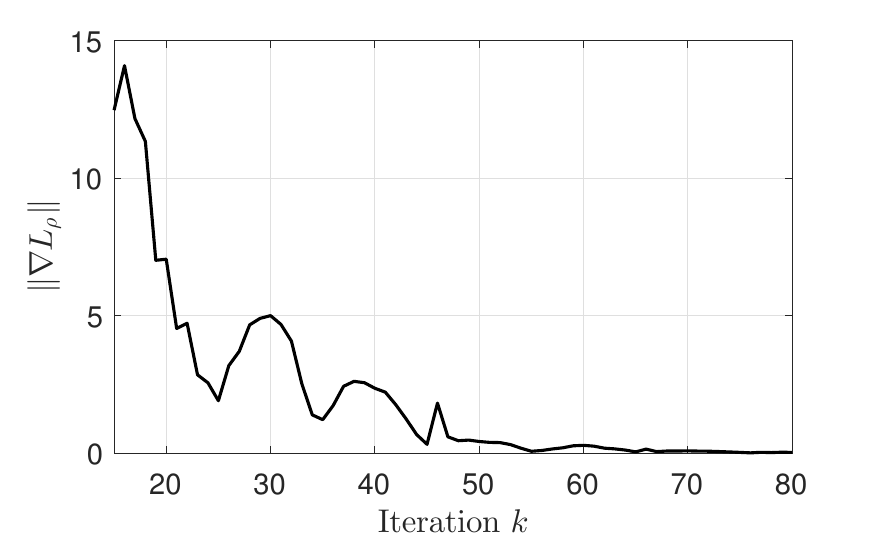}
		\end{minipage}%
	}%
	\centering
	\caption{ (a) the subgradients $\Vert \nabla \Phi_{\beta_k} (\bm{W}^k) \Vert $ w.r.t iteration k.  (b) the subgradients $\Vert \nabla \mathbb{L}_{\rho_{\underline{k}}} (\bm{\bar{X}}^k, \bm{Z}^k, \bm{\lambda}^k, \bm{\mu}^k) \Vert $ w.r.t iteration $k$.} \label{figure3} 
\end{figure}

\begin{figure}[htbp]
	\centering
	\subfigure[]{
		\begin{minipage}[t]{0.5\linewidth}
			\centering
			\includegraphics[width=1.8 in, height=1 in]{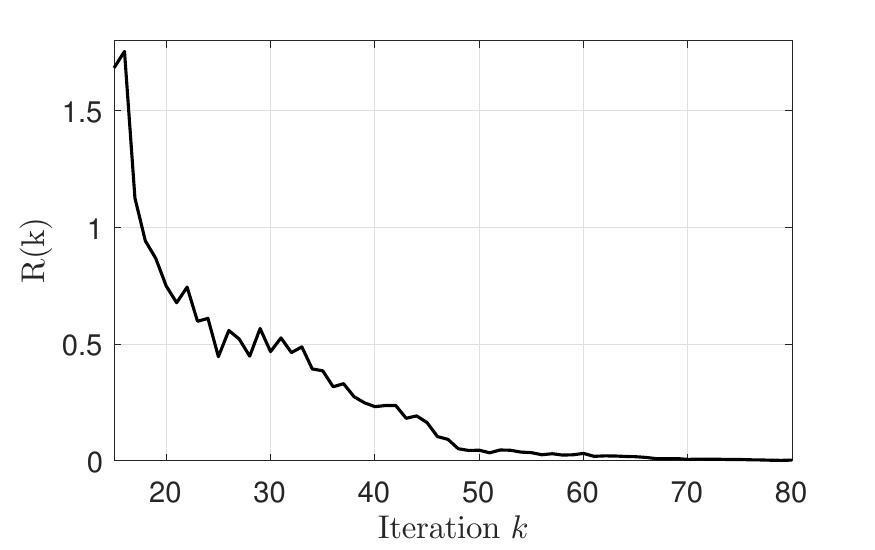}
		\end{minipage}%
	}%
	\subfigure[]{
		\begin{minipage}[t]{0.5\linewidth}
			\centering
			\includegraphics[width=1.8 in, height=1 in]{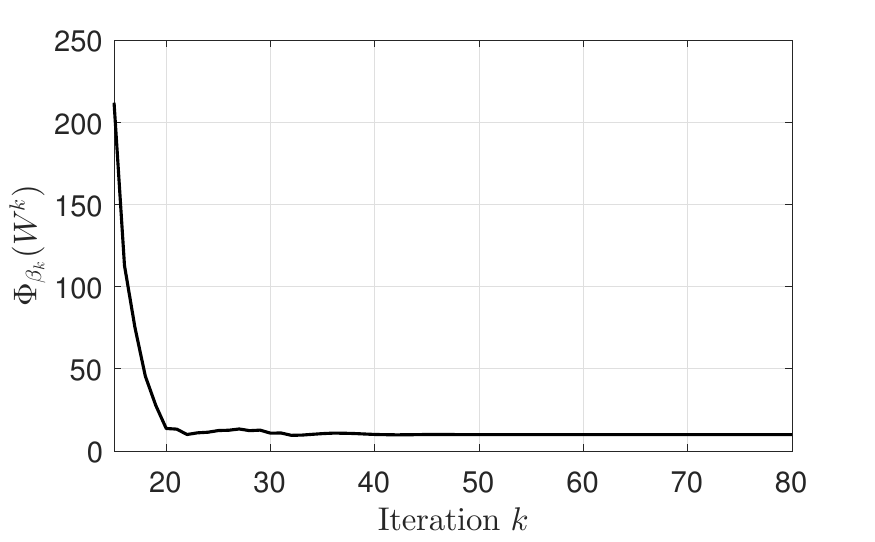}
		\end{minipage}%
	}%
	\centering
	\caption{ (a) the residual error $R(k)$ w.r.t iteration k.  (b) the Lynapunov function $ \Phi_{\beta_k} (\bm{W}^k) $ w.r.t iteration $k$.} \label{figure4} 
\end{figure}



\subsection{Application: Multi-zone HVAC Control}
This section presents the PLDM's application to multi-zone HVAC control, which has raised extensive discussion among the  research communities. The general formulation for designing  a model predictive controller (MPC) to  minimize  the HVAC's  energy cost while guaranteeing thermal comfort   is exhibited in \eqref{HVAC problem} with constraints imposed by  \emph{(i)} zone thermal dynamics (coupled, nonlinear and nonvex) \eqref{thermal dynamics}, \emph{(ii)} comfortable temperature ranges \eqref{thermal comfort},  and \emph{{(iii)}} operation limits of the local variable air  box \eqref{zone air flow rate}. Readers are referred to  \cite{yang2019hvac, radhakrishnan2016token, wang2017distributed} for the detailed  problem formulation and notations.  
\small 
\begin{subequations}
	\begin{alignat}{4} 
&\label{HVAC problem} \min_{m^{zi}_t, T^i_t} J=\sum_{t=0}^{H-1}  c_t \cdot \Big\{  c_p (1-d_r)\sum_{i=1}^I m^{zi}_t (T^o_t-T^c_t)\! \quad \quad \quad \notag\\
&\quad  +c_p d_r \sum_{i=1}^I m^{zi}_t (T^i_t-T^c_t)+  \kappa_f \cdot  (\sum_{i=1}^I {m^{zi}_t})^3
\Big\} \cdot \Delta_t  \tag{40}\\
s.t. ~ 
&\label{thermal dynamics} T^i_{t+1}\!=\! A^{ii} T^i_t\!+\!\sum_{j\in \mathcal{N}_i} A^{ij} T^j_t\!+\!C^{ii} m^{zi}_t (T^i_t\!-\!T^c_t)+D^{ii}_t, \\
&\label{thermal comfort}\quad \underline{T}^i  \leq T^i_t \leq \overline{T}^i, \\ 
&\label{zone air flow rate}\quad \underline{m}^{zi} \leq m^{zi}_t \leq \overline{m}^{zi}, \quad \forall i \in \mathcal{I}, ~t =0, 1, \cdots, H.
\end{alignat}
\end{subequations}
\normalsize

One can see that the control of  the multi-zone HVAC system  requires solving a nonlinear and nonconvex optimization problem with coupled nonlinear constraints. Centralized optimization methods are not scalable or computationally viable and therefore decentralized methods become imperative.  However,  the existing decentralized methods  can not be adapted due to  the non-linearity and non-convexity both  from the objective function  and the constraints.  

In this part, we resort to the proposed PLDM.   We first consider a building with $I=10$ zones and 
 randomly generate a network to denote the thermal couplings  among the different zones. 
The comfortable zone temperature ranges and the maximum zone mass flow rate  are set as $[24, 26] ^oC$ and $\bar{m}^{zi}=0.5 kg/s$. 
The other parameter settings can refer to \cite{yang2019hvac}. 
When the PLDM method is applied, the zone temperature and zone mass flow rates for two randomly picked zones (Zone 1 and 8) are shown in Fig. \ref{figure6} (a) and \ref{figure6} (b). We note that  both the zone temperature and zone mass flow rates  are maintained within the setting ranges.
This implies the feasibility of the solutions. 
Besides,  by observing the residual error w.r.t the iterations  in  Fig. \ref{figure6} (c),  we can imply the fast  convergence rate of the PLDM. 
Further,  to evaluate the sub-optimality of local optima under the PLDM with some initial start points,  we compare the results with a centralized method  in case studies with $I \in \{10, 20\}$ zones.  
In the centralized method,  the optimal solutions can be obtained by solving the non-linear and non-convex problem \eqref{HVAC problem} using Ipopt solver. 
As shown in Fig.  \ref{figure6} (d),  while benchmarking the centralized method (optimal cost is $1$), the
 sub-optimality of the local optima under the  PLDM is  less than $10$\%.  This implies that the PLDM can  approach a satisfactory local optima for non-linear and non-convex problems for some randomly picked initial points. 
In principle, the global optima  for the non-linear problems  can be approached by scattering enough  initial points.
However, some  local optima with  favorable performance and  feasibility guarantee are  generally enough in practice. 
  


\begin{figure}[htbp]
	\centering
	
	\subfigure[]{
		\begin{minipage}[t]{0.5\linewidth}
			\centering
			\includegraphics[width=1.8 in, height=1 in ]{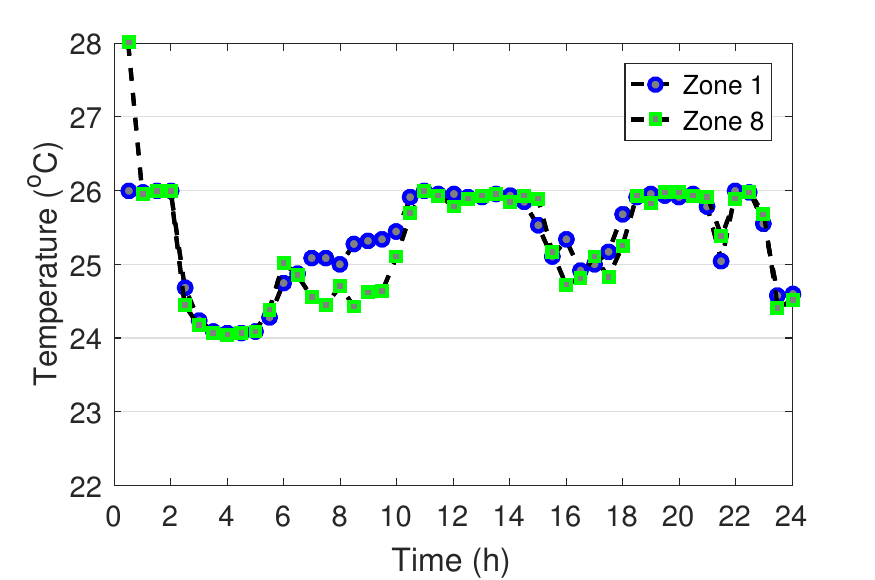}
		\end{minipage}%
	}%
	\subfigure[]{
		\begin{minipage}[t]{0.5\linewidth}
			\centering
			\includegraphics[width=1.8 in, height=1 in]{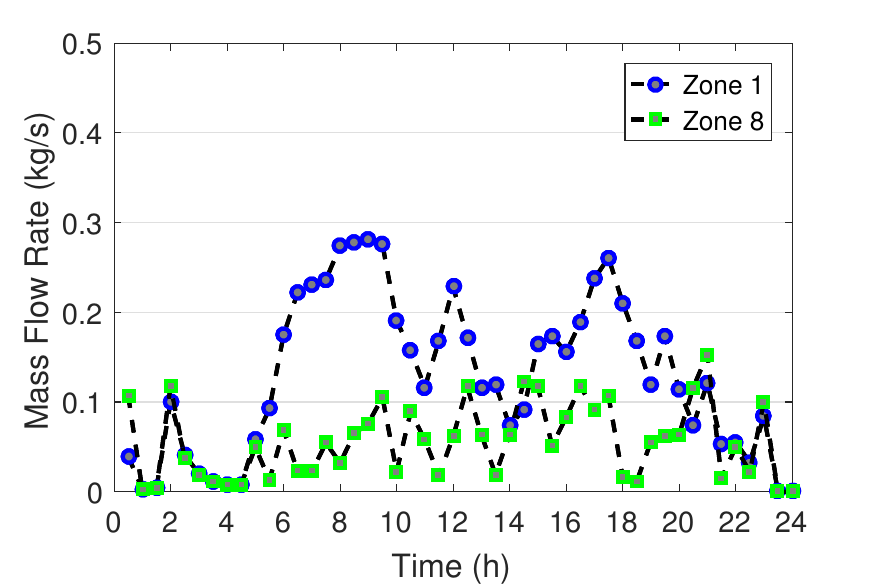}
		\end{minipage}%
	}%

	\subfigure[]{
		\begin{minipage}[t]{0.5\linewidth}
			\centering
			\includegraphics[width=1.8 in, height=1 in]{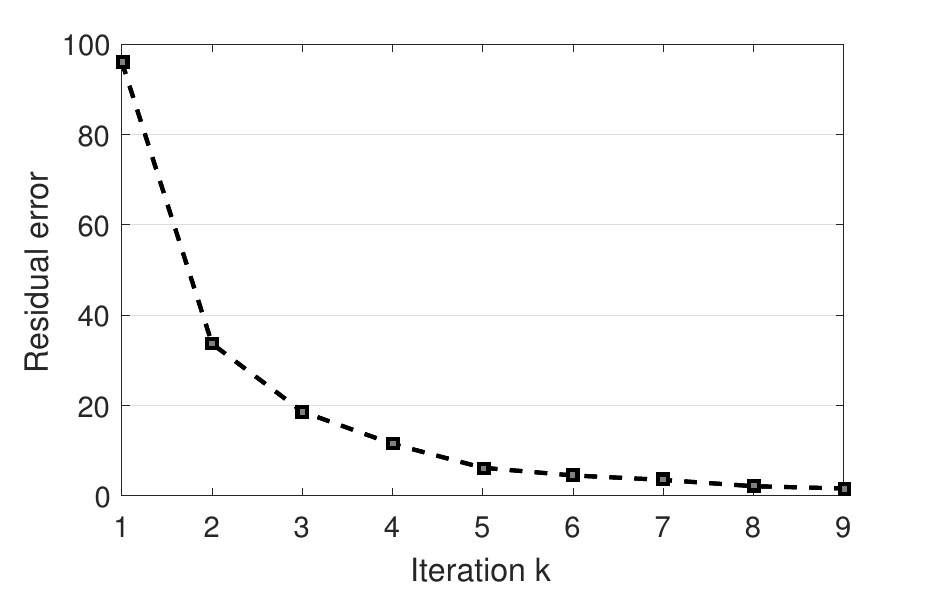}
		\end{minipage}
	}%
	\subfigure[]{
		\begin{minipage}[t]{0.5\linewidth}
			\centering
			\includegraphics[width=1.8 in, height=1 in]{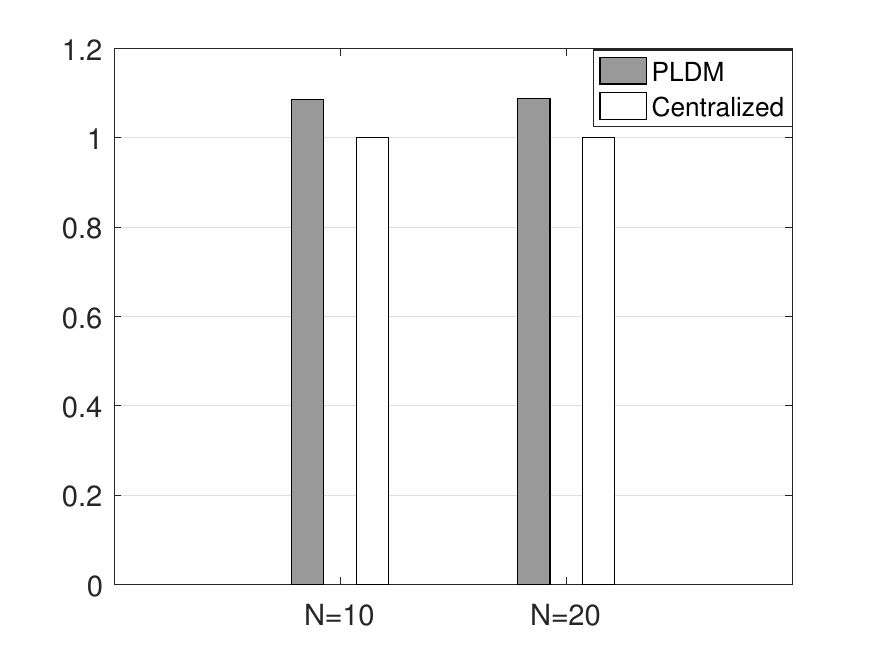}
		\end{minipage}
	}%
	
	\centering
	\caption{ (a)Zone temperature. (b) Zone mass flow rate. (c) Residual error of the constraints. (d) Comparison of PLDM vs. Centralized Method. }
	\label{figure6}
\end{figure}

\section{Conclusion}
This paper investigated decentralized optimization on  a class of  non-convex  problems structured by: {\em{(i)}} nonconvex global objective function (possibly nonsmooth) and {\em{(ii)}} local bounded convex constraints and coupled nonlinear constraints.    Such problems arise from a variety of applications (e.g., smart grid and smart buildings, etc), which demand for efficient and scalable computation. To solve such challenging problems, we proposed a {\emph{Proximal Linearization-based Decentralized Method}} (PLDM) under the augmented Lagrangian-based framework.  Deviating from conventional augmented Lagrangian-based methods which require exact joint  optimization of different decision variable blocks (local optima), the proposed method capitalized on \emph{proximal linearization} technique to update the decision variables at each iteration,  which makes it computationally efficient in the presence of non-linearity. Both the global convergence and the local convergence rate of the PLDM's  to the $\epsilon$-critical points of the problem  were discussed based on the Kurdyka-Łojasiewicz property and some standard assumptions. In addition, the PLDM's performance was illustrated on academic and application examples in which the convergence was observed. Applying PLDM to emerging applications and eliminating consensus variables are the future work to be explored.



\appendices

\section{Proof of \textbf{Proposition} 1-8}

\subsection{Proof of \textbf{Proposition} 1}
\begin{proof}
	We illustrate the proposition by contradiction. We assume $\forall k \in \mathbb{N}$, we have 
	\begin{equation} \label{41}
	\begin{split}
	&\sum_{i=1}^N \Vert h_i(\bm{X}^{k}_i) \Vert>\eta>0
	\end{split}
	\end{equation}
	
In this case, we have $\rho_{k}$ kept increasing, i.e, $ \rho_{k} \rightarrow +\infty$. 
Based on $\bm{\lambda}^{k}_i=\bm{\lambda}^{k-1}_i+\rho_{k-1} h_i(\bm{X}^{k}_i)$, we have
	\begin{equation*}
	\begin{split}
	&\sum_{i=1}^N \Vert h_i(\bm{X}^{k}_i) \Vert \!=\!\frac{1}{\rho_{k-1}} \sum_{i=1}^N \Vert \bm{\lambda}^{k}_i-\bm{\lambda}^{k-1}_i \Vert \leq \frac{\sum_{i=1}^N M_{\bm{\lambda}_i}}{\rho_{k-1}}
	\end{split}
	\end{equation*} 
	($\{\bm{\lambda}^k_i\}_{k \in \mathcal{N}}$ is assumed bounded, see {\em{(A6)}})
	
	The above implies that 
	\begin{equation} \label{42}
	\lim_{k \rightarrow +\infty}\sum_{i=1}^N \Vert h_i(\bm{X}^{k}_i) \Vert \rightarrow 0
	\end{equation}
	
	One may note that \eqref{41} and \eqref{42} contradict with each other. Thus, we conclude that there exist $\underline{k}$ such that  
	\begin{equation*} 
	\begin{split}
	& \sum_{i=1}^N \Vert h_i(\bm{X}^{k}_i) \Vert  \leq \frac{\sum_{i=1}^N M_{\bm{\lambda}_i}}{\rho_{k-1}} \leq \eta, ~~\forall k \geq \underline{k}. \\
	&\{\bm{\bar{X}}^k_i \}_{k \in \mathbb{N}, k \geq  \underline{k}} \in \bm{\mathcal{\bar{X}}}^{\eta}_i~ (i \in \mathcal{N}).
	\end{split}
	\end{equation*}
	Further, based on the adaptive procedure of PLDM, we have
	\begin{equation*}
		 \rho_{k}=\rho_{\underline{k}},~~\forall k \geq \underline{k}.\\
	\end{equation*}

\end{proof}

\subsection{Proof of Proposition 2}
\begin{proof}
	Since $h_i$ is twice differentiable, we have $h_i(\bm{X}_i)$ and $\nabla_{\bm{X}_i} h_i(\bm{X}_i)$ are continuous and  bounded over the bounded set  $\bm{\mathcal{\bar{X}}}^\eta_i$.  In addition, we have $\rho_{k} \leq \rho_{\underline{k}}$ ($\forall  k \in \mathbb{N}$), thus
	\begin{displaymath}
	\big \Vert \rho_{k}  \big( \nabla_{\bm{X}_i} h_i(\bm{X}_i) \big)^T h_i(\bm{X}_i) \big \Vert  \leq \rho_{\underline{k}} C_{h_i}  \leq +\infty ~~(\forall  k \in \mathbb{N} )
	\end{displaymath}
	where $C_{h_i}$ is upper bound of \small $ \Vert ( \nabla_{\bm{X}_i} \!h_i(\bm{X}_i))^T h_i(\bm{X}_i) \Vert$  over $\bm{\mathcal{\bar{X}}}^\eta_i$.  \normalsize 
	
	Besides, we have
	\begin{equation}
	\begin{split}
	&\nabla_{\bm{X}_i} g_i (\bm{\bar{X}}_i, \bm{\lambda}^k_i, \rho_{k})=\nabla_{\bm{X}_i}\tilde{f}_i(\bm{X}_i)+\nabla_{\bm{X}_i} \phi_i(\bm{X}_i)\\
	&\quad +\big(\nabla_{\bm{X}_i} h_i(\bm{X}_i) \big)^T\bm{\lambda}^k_i+\rho_{k} \big(\nabla_{\bm{X}_i} h_i(\bm{X}_i) \big)^T h_i(\bm{X}_i). \\
	&\nabla_{\bm{Y}_i} g_i (\bm{\bar{X}}_i, \bm{\lambda}^k_i, \rho_{k})=2M_i\bm{Y}_i.
	\end{split}
	\end{equation}
	
As $\tilde{f}_i$, $\phi_i$ and $h_i$ are Lipschitz continuous w.r.t $\bm{X}_i$ (or $\bm{\bar{X}}_i$) with constant $L_{f_i}$, $L_{\phi_i}$ and $L_{h_i}$,  $\{\bm{\lambda}^k_i\}_{k \in \mathbb{N}}$ upper bounded by $M_{\bm{\lambda}_i}$ (see \emph{(A6)}),  we conclude that 
$$\Vert \nabla_{\bm{X}_i} g_i (\bm{\bar{X}}_i, \bm{\lambda}^k_i, \rho^k)\Vert \leq L_{g_i}~\textrm{over} ~\bm{\mathcal{\bar{X}}}^\eta_i,~\forall k \in \mathbb{N}.$$
where we have {\small {$L_{g_i}=\max \{L_{f_i}+L_{\phi_i}+ M_{{\lambda}_i}  L_{h_i}+\rho_{\underline{k}} C_{h_i}, 2M_i \}$.}}
 
 That is, $g_i(\bm{\bar{X}}_i, \bm{\lambda}^k_i, \rho_{k})$ ($\forall k \in \mathbb{N}$) is Lipschitz continuous w.r.t $\bm{\bar{X}}_i$  over $\bm{\mathcal{\bar{X}}}^\eta_i$ with  constant $L_{g_i}$.
\end{proof}
 
\subsection{Proof of Proposition 3}
\begin{proof}
	Based on  subproblem (\ref{(6)}), it's straightforward that 
	\begin{eqnarray} \label{(12)}
	\begin{split}
	&\sum_{i=1}^N  \big(\bm{ \mu}^{k}_i \big)^T \big( \bm{A}_i \bm{\bar{X}}^{k}_i\!-\!\bm{E}_i \bm{Z}^{k+1} \big)\!\!+\!\!\sum_{i=1}^N \frac{\rho_{\underline{k}}}{2} \Big \Vert \bm{A}_i \bm{\bar{X}}^{k}_i\!-\!\bm{E}_i \bm{Z}^{k\!+\!1} \Big \Vert^2 \\
	&\leq  \sum_{i=1}^N \big( \bm{\mu}^{k}_i \big)^T \big( \bm{A}_i \bm{\bar{X}}^{k}_i\!-\!\bm{E}_i \bm{Z}^{k} \big)\!\!+\!\!\sum_{i=1}^N \frac{\rho_{\underline{k}}}{2} \Big \Vert  \bm{A}_i \bm{\bar{X}}^{k}_i\!-\!\bm{E}_i \bm{Z}^{k}  \Big \Vert^2 \\
	\end{split}
	\end{eqnarray}
	
	Subproblem (\ref{(7)}) is equivalent to 
	\begin{displaymath}
	\begin{split}
	\bm{\bar{X}}^{k+1}_i  \in  {prox}^{\sigma_i}_{c^k_i} \Big( \bm{\bar{X}}^k_i-\frac{1}{c^k_i} \nabla_{\bm{\bar{X}}_i} g_i(\bm{\bar{X}}^k_i, \bm{\lambda}^k_i, \rho_{\underline{k}})\Big)
	\end{split}
	\end{displaymath}
	where we have $\sigma_i(\bm{\bar{X}}_i)= ( \bm{\mu}_i)^T (\bm{A}_i \bm{\bar{X}}_i-\bm{E}_i \bm{Z}^{k+1} )+ \frac{\rho_{\underline{k}}}{2} \Vert \bm{A}_i \bm{\bar{X}}_i-\bm{E}_i {\bm{Z}}^{k+1}  \Vert^2$.  
	
	Thus, by invoking  \textbf{Lemma} \ref{lemma2}, we have
	\begin{equation} \label{(14)}
	\begin{split}
	&g_i(\bm{\bar{X}}^{k+1}_i, \bm{\lambda}^k_i, \rho_{\underline{k}})+ (\bm{\mu}^{k}_i)^T ( \bm{A}_i \bm{\bar{X}}^{k+1}_i-\bm{E}_i \bm{Z}^{k+1}) \\
	&\quad \quad  +\frac{\rho_{\underline{k}}}{2} \Vert  \bm{A}_i \bm{\bar{X}}^{k+1}_i- \bm{E}_i \bm{Z}^{k+1} \Vert^2\\
	& \leq g_i(\bm{\bar{X}}^{k}_i, \bm{\lambda}^k_i, \rho_{\underline{k}})+ (\bm{\mu}^{k}_i)^T ( \bm{A}_i \bm{X}^{k}_i-\bm{E}_i \bm{Z}^{k+1}) \\
	&\quad  \quad + \frac{\rho_{\underline{k}}}{2} \Vert \bm{A}_i \bm{\bar{X}}^{k}_i- \bm{E}_i \bm{Z}^{k+1} \Vert^2\\
	&\quad \quad \!-\!\frac{1}{2} \big(c_i^k\!-\!L_{g_i}\big) \Vert \bm{\bar{X}}_i^{k+1}-\bm{\bar{X}}^k_i\Vert^2, ~\forall i \in \mathcal{N}. \\
	\end{split}
	\end{equation}
	
	By summing up (\ref{(12)}) and (\ref{(14)}) for   $\forall i \in \mathcal{N}$, we have
	
	\noindent
	\begin{equation} \label{(155)}
	\begin{split}
	&\sum_{i=1}^N g_i(\bm{\bar{X}}^{k+1}_i, \bm{\lambda}^k_i, \rho_{\underline{k}})\!+\!\!\sum_{i=1}^N  ( \bm{\mu}^{k}_i )^T (\bm{A}_i \bm{\bar{X}}^{k+1}_i- \bm{E}_i \bm{Z}^{k+1}) \\
	&\!+\!\sum_{i=1}^N \! \frac{\rho_{\underline{k}}}{2}\! \Vert \bm{A}_i  \bm{\bar{X}}^{k+1}_i\!\!-\!\!\bm{E}_i \bm{Z}^{k+1}  \Vert^2\!\!+\!\!\sum_{i=1}^N\! \big( \bm{\mu}^{k}_i \big)^T\!\! \big(\bm{A}_i  \bm{\bar{X}}^{k}_i\!\!-\!\!\bm{E}_i \bm{Z}^{k+1} \big)\\
	&+\sum_{i=1}^N \frac{\rho_{\underline{k}}}{2}  \Vert \bm{A}_i  \bm{\bar{X}}^{k}_i- \bm{E}_i \bm{Z}^{k+1} \Vert^2\\
	& \leq \sum_{i=1}^N  g_i(\bm{\bar{X}}^{k}_i, \bm{\lambda}^{k}_i, \rho_{\underline{k}} )+\sum_{i=1}^N  ( \bm{\mu}^{k}_i)^T (  \bm{A}_i \bm{\bar{X}}^{k}_i-\bm{E}_i \bm{Z}^{k+1}) \\
	&\!+\!\sum_{i=1}^N \!\! \frac{\rho_{\underline{k}}}{2}\! \Vert  \bm{A}_i \bm{\bar{X}}^{k}_i- \bm{E}_i \bm{Z}^{k+1} \Vert^2\!+\!\sum_{i=1}^N \big( \bm{\mu}^{k}_i \big)^T \big( \bm{A}_i \bm{\bar{X}}^{k}_i- \bm{E}_i \bm{Z}^{k} \big)\\
	&+ \sum_{i=1}^N \frac{\rho_{\underline{k}}}{2} \Vert   \bm{A}_i \bm{\bar{X}}^{k}_i\!-\!\!\bm{E}_i \bm{Z}^{k}  \Vert^2 \\
	&+\sum_{i=1}^N \frac{1}{2} (c^k_i-L_{g_i}) \Vert \bm{\bar{X}}_i^{k+1}-\bm{\bar{X}}^k_i\Vert^2 \\ 
	\end{split}
	\end{equation}
	
	Thus, \textbf{Proposition} \ref{prop4} can be concluded by removing the same terms from both sides of  (\ref{(155)}).
\end{proof}

\subsection{Proof of Proposition 4}
\begin{proof}
	As subproblem (\ref{(7)})  is QP, based on the first-order optimality condition and $\bm{\mu}^{k+1}_i=\bm{\mu}^{k}_i+\rho_{k} ( \bm{A}_i \bm{\bar{X}}^{k+1}_i-\bm{E}_i  \bm{Z}^{k+1} )$, we have
	\begin{equation} \label{(17)}
	\begin{split}
	\nabla_{\bm{\bar{X}}_i} g_i (\bm{\bar{X}}^k_i, &\bm{\lambda}_i^k, \rho_{\underline{k}})\!+ \bm{\mu}^{k+1}_i +c^k_i \big( \bm{\bar{X}}^{k+1}_i\!-\!\bm{\bar{X}}^k_i \big)=0 \\
	\end{split}
	\end{equation}

	Besides, we have 
	\begin{equation}  \label{(18)}
	\begin{split}
	\nabla_{\bm{\bar{X}}_i }  \mathbb{L}_{\rho_{\underline{k}}} &(\bm{\bar{X}}^{k+1},  \bm{Z}^{k+1}, \bm{\lambda}^k, \bm{\mu}^k)\\
	&=\nabla_{\bm{X}_i} g_i(\bm{\bar{X}}^{k+1}_i, \bm{\lambda}^k_i, \rho_{\underline{k}})+\bm{\mu}^{k+1}_i \\
	\end{split}
	\end{equation}

	By combining  (\ref{(17)}) and (\ref{(18)}), we have 
	\begin{equation}  \label{(19)}
	\begin{split}
	&\Vert \nabla_{\bm{\bar{X}}_i } \mathbb{L}_{\rho_{\underline{k}}} (\bm{\bar{X}}^{k+1},  \bm{Z}^{k+1}, \bm{\lambda}^k, \bm{\mu}^k) \Vert\\
	&=\Vert  \nabla_{\bm{X}_i} g_i (\bm{\bar{X}}^{k+1}_i, \bm{\lambda}_i^k, \rho_{\underline{k}})-\nabla_{\bm{\bar{X}}_i} g_i (\bm{\bar{X}}^{k}_i, \bm{\lambda}_i^k, \rho_{\underline{k}})\\
	& \quad -c^k_i \big( \bm{\bar{X}}^{k+1}_i-\bm{\bar{X}}^k_i \big)  \Vert\\
	&\leq  \Vert \nabla_{\bm{\bar{X}}_i} g_i (\bm{\bar{X}}^{k+1}_i, \bm{\lambda}_i^k, \rho_{\underline{k}})-\nabla_{\bm{\bar{X}}_i} g_i (\bm{\bar{X}}^{k}_i, \bm{\lambda}_i^k, \rho_{\underline{k}})  \Vert\\
	&\quad + c^k_i \Vert   \big( \bm{\bar{X}}^{k+1}_i-\bm{\bar{X}}^k_i \big) \Vert \\
	&\leq  \big(L_{g_i}+c_i^k \big) \Vert \bm{\bar{X}}^{k+1}_i-\bm{\bar{X}}^k_i\Vert \\
	\end{split}
	\end{equation}	
	where the last inequality is concluded from \textbf{Proposition} \ref{prop1}.
\end{proof}

\subsection{Proof of Proposition 5}
\begin{proof}
	For notation, we define 
	\begin{equation*}
	\begin{split}
	\Delta^{\bm{\lambda}_i}_k&\!=\! \big( \nabla_{\bm{X}_i} h_i(\bm{X}^{k+1}_i)~\bm{O}_{\bar{n}_i}\big)^T {\bm{\lambda}}^{k+1}_i\!-\!\big( \nabla_{\bm{X}_i} h_i(\bm{X}^{k}_i)~\bm{O}_{\bar{n}_i} \big)^T {\bm{\lambda}}^{k}_i\\
	\Delta^{\bm{\mu}^i }_k&= (\bm{A}_i)^T \bm{\mu}^{k+1}_i-(\bm{A}_i)^T \bm{\mu}^k_i \\
	\end{split}
	\end{equation*}
	
	First, we have
	\begin{equation} \label{50}
	\begin{split}
	\Vert &\Delta^{\bm{\lambda}_i}_k \!+\!\Delta_k^{\bm{\mu}^i}\Vert\!=\!\big\Vert \big(\bm{F}_i(\bm{\bar{X}}^{k+1}_i)\big)^T \bm{\gamma}^{k+1}_i\!-\!\big(\bm{F}_i(\bm{\bar{X}}^k_i) \big)^T \bm{\gamma}^k_i \big\Vert \\
	&=\Vert  \big(\bm{F}_i(\bm{\bar{X}}^{k+1}_i) \big)^T \big(\bm{\gamma}^{k+1}_i - \bm{\gamma}^{k}_i\big)\\
	&\quad  +\big( \bm{F}_i  (\bm{\bar{X}}^{k+1}_i)\big)^T-\big(\bm{F}_i(\bm{\bar{X}}^k_i) \big)^T \bm{\gamma}^{k}_i  \big \Vert \\
	&\geq \big \Vert  \big(\bm{F}_i(\bm{\bar{X}}^{k+1}_i) \big)^T \big(\bm{\gamma}^{k+1}_i - \bm{\gamma}^{k}_i\big) \big \Vert\\
	&\quad  -\big \Vert \big( \bm{F}_i  (\bm{\bar{X}}^{k+1}_i)\big)^T-\big(\bm{F}_i(\bm{\bar{X}}^k_i) \big)^T \bm{\gamma}^{k}_i   \big \Vert\\
	\end{split}
	\end{equation}
	where we have $	\bm{F}_i(\bm{\bar{X}}_i)=
	\left(  
	\begin{array}{ccc}
	\nabla_{\bm{X}_i} h_i(\bm{X}_i) &\!\! & \bm{O}_{n_i} \\
	&\bm{A}_i&\\
	\end{array} 
	\right)	
	$
	and $\bm{\gamma}_i=\big( (\bm{\lambda}_i)^T~(\bm{\mu}_i)^T)^T$ as defined.
	
	According to {\em{(A6)}}, we have  $  \Vert \bm{\gamma}^k_i\Vert^2=\Vert \bm{\lambda}^k_i \Vert^2+\Vert \bm{\mu}^k_i \Vert^2  \leq (M_{\bm{\lambda}_i})^2+(M_{\bm{\mu}_i})^2$ ($\forall k \in \mathbb{N}$). Thus  
	\begin{equation} \label{51}
	\Vert \bm{\gamma}^k_i\Vert \leq M_{\bm{\gamma}_i}~\forall k \in \mathbb{N}
	\end{equation} with
	$M_{\bm{\gamma}_i}=\sqrt{(M_{\bm{\lambda}_i})^2+(M_{\bm{\mu}_i})^2}. $

	Besides, according to {\em{(A1)}}, we have 
	\small
	\begin{eqnarray} \label{52}
	\begin{split}
	 &\Vert \bm{F}_i (\bm{X}^{k+1}_i)\!\!-\!\!\bm{F}_i (\bm{X}^{k}_i) \Vert\!\!=\!\!\Vert \nabla_{\bm{X}_i} h_i(\bm{X}^{k+1}_i)\!\!-\!\! \nabla_{\bm{X}_i} h_i(\bm{X}^{k}_i) \Vert \\
	 & \leq  M_{h_i} \Vert \bm{X}^{k+1}_i-\bm{X}^{k}_i\Vert \leq M_{h_i} \Vert \bm{\bar{X}}^{k+1}_i-\bm{\bar{X}}^{k}_i\Vert, ~\forall k \geq \underline{k}.
	 \end{split}
	 \end{eqnarray} 
	 \normalsize
	 
	 Based on \eqref{51}, \eqref{52} and {\em{(A5)}}, we can derive from \eqref{50} that
	 \small
	 	\begin{equation} \label{53}
	 \begin{split}
	 \Vert &\Delta^{\bm{\lambda}_i}_k \!+\!\Delta_k^{\bm{\mu}^i}\Vert\!\geq \theta \Vert \bm{\gamma}^{k+1}_i\!-\!\bm{\gamma}^k_i\Vert- M_{h_i} M_{\bm{\gamma}_i} \Vert \bm{\bar{X}}^{k+1}_i\!-\!\bm{\bar{X}}^k_i\Vert
	 \end{split}
	 \end{equation}
	\normalsize
	
	Meanwhile, we have
	\begin{displaymath}
	\begin{split}
	\Vert &\Delta^{\bm{\lambda}_i}_k \!+\!\Delta_k^{\bm{\mu}^i}\Vert\!\!=\!\!\big\Vert \big(\bm{F}_i(\bm{\bar{X}}^{k+1}_i)\big)^T \bm{\gamma}^{k+1}_i\!-\!\big(\bm{F}_i(\bm{\bar{X}}^k_i) \big)^T \bm{\gamma}^k_i \big\Vert \\
	&= \Vert \big( \nabla_{\bm{\bar{X}}_i} g_i(\bm{\bar{X}}^{k+1}_i, \bm{\lambda}^{k}_i, \rho_{\underline{k}})+\big(\bm{A}_i \big)^T\bm{ \mu}^{i, k+1} \big) \\
	&\quad  -\big(  \nabla_{\bm{X}_i} g_i(\bm{\bar{X}}^{k}_i, \bm{\lambda}^{k-1}_i, \rho_{\underline{k}} )+\big(\bm{A}_i \big)^T \bm{ \mu}^{i, k} \big) \\
	&\quad  +\big(\nabla_{\bm{X}_i} \tilde{f}_i(\bm{X}^k_i)- \nabla_{\bm{X}_i} \tilde{f}_i(\bm{X}^{k+1}_i)\big)\\
	&\quad +\big(  \nabla_{\bm{X}_i} \phi_i(\bm{X}^k_i)-\nabla_{\bm{X}_i} \phi_i (\bm{X}^{k+1}_i)\big) \Vert \\
		 \end{split}
			\end{displaymath}
		\begin{equation} \label{54}
	\begin{split}
	&\leq \Vert  \nabla_{\bm{\bar{X}}_i } \mathbb{L}_{\rho_{\underline{k}}} (\bm{\bar{X}}^{k+1},  \bm{Z}^{k+1}, \bm{\lambda}^k, \bm{\mu}^k) \Vert \\
	&\quad +\Vert  \nabla_{\bm{\bar{X}}_i } \mathbb{L}_{\rho_{\underline{k}}} (\bm{\bar{X}}^{k},  \bm{Z}^{k}, \bm{\lambda}^{k-1}, \bm{\mu}^{k-1}) \Vert\\
	&\quad +\Vert \nabla_{\bm{X}_i} \tilde{f}_i(\bm{X}^k_i)- \nabla_{\bm{X}_i} \tilde{f}_i(\bm{X}^{k+1}_i)  \Vert \\
	&\quad+ \Vert  \nabla_{\bm{X}_i} \phi_i(\bm{X}^k_i)-\nabla_{\bm{X}_i} \phi_i (\bm{X}^{k+1}_i)  \Vert \\
	&\leq \big(L_{g_i} +c_i^k+L_{f_i}+L_{\phi_i}\big) \Vert \bm{\bar{X}}^{k+1}_i-\bm{\bar{X}}^k_i\Vert \\
	&\quad +\big(L_{g_i}  +c^i_{k\!-\!1} \big) \Vert \bm{\bar{X}}^{k}_i-\bm{\bar{X}}^{k\!-\!1}_i\Vert  \\
	\end{split}
	\end{equation}
	where the last equality is derived from  \textbf{Proposition} \ref{prop5} and {\em{(A2)}}-{\em{(A3)}} .

	By combining \eqref{53} and \eqref{54}, we have
	\begin{equation*}
	\begin{split}
	\Vert \bm{\gamma}^{k+1}_i\!\!-\!\!\bm{\gamma}^k_i\Vert \leq\Omega^i_1 \Vert \bm{\bar{X}}^{k+1}_i\!-\!\bm{\bar{X}}^k_i\Vert\!+\!\Omega^i_2 \Vert \bm{\bar{X}}^{k}_i\!-\!\bm{\bar{X}}^{k-1}_i\Vert  \\
	\end{split}
	\end{equation*}
	where $\Omega^i_1= \big(L_{g_i} +c_i^k+L_{f_i}+L_{\phi_i}+M_{h_i} M_{\bm{\gamma}_i} \big)/\theta$  and 
	$\Omega^i_2=\big(L_{g_i} +c_i^{k\!-\!1})/\theta$ ($i \in \mathcal{N}$).
\end{proof}

\subsection{Proof of Proposition 6}
\begin{proof}
	First, based on $h_i(\bm{X}^{k+1}_i)=\frac{1}{\rho_{k}} (\bm{\lambda}^{k+1}_i-\bm{\lambda}^k_i)$ and $(\bm{A}_i \bm{\bar{X}}^{k+1}_i-\bm{E}_i \bm{Z}^{k+1} )=\frac{1}{\rho_{k}} ( \bm{\mu}^{k+1}_i-\bm{\mu}^{k}_i)$, we have
	\small
	\begin{equation}  \label{(31)}
	\begin{split}
	&\mathbb{L}_{\rho_{\underline{k}}} (\bm{\bar{X}}^{k+1}, \! \bm{Z}^{k+1}, \bm{\lambda}^{k+1}, \bm{\mu}^{k+1})\!-\!\mathbb{L}_{\rho_{\underline{k}}} (\bm{\bar{X}}^{k+1}, \! \bm{Z}^{k+1}, \bm{\lambda}^{k}, \bm{\mu}^{k}) \\
	&= \frac{1}{\rho_{\underline{k}}} \Big (\sum_{i=1}^N \Vert \bm{\lambda}^{k+1}_i-\bm{\lambda}_i^k \Vert^2 +\sum_{i=1}^N\Vert {\bm{\mu}_i}^{k+1}-\bm{\mu}_i^k \Vert^2 \Big)\\
	&= \frac{1}{\rho_{\underline{k}}} \Big (\sum_{i=1}^N \Vert \bm{\gamma}^{k+1}_i-\bm{\gamma}^k_i\Vert^2\Big)\\
	& \leq \sum_{i=1}^N  \Big \{   \frac{2 (\Omega^i_1)^2}{\rho_{\underline{k}}} \Vert \bm{\bar{X}}^{k+1}_i\!\!-\!\!\bm{\bar{X}}^k_i\Vert^2\!\!+\!\!\frac{2 (\Omega^i_2)^2}{\rho_{\underline{k}}} \Vert  \bm{\bar{X}}^k_i\!\!-\!\!\bm{\bar{X}}^{k-1}_i \Vert^2 \Big \}
	\end{split}
	\end{equation}
	\normalsize
	where the last inequality is derived  by invoking  \textbf{\textrm{Proposition}} \ref{prop9} and the Cauchy–Schwarz inequality  $(a+b)^2 \leq 2(a^2+b^2)$.
	
	

	

	Further, by combining (\ref{(11)}) with (\ref{(31)}), we have 
	\begin{equation*}
	\begin{split}
	&\mathbb{L}_{\rho_{\underline{k}}} (\bm{\bar{X}}^k, \bm{Z}^k, \bm{\lambda}^{k}, \bm{\mu}^{k})-\mathbb{L}_{\rho_{\underline{k}}}(\bm{\bar{X}}^{k+1}, \bm{Z}^{k+1}, \bm{\lambda}^{k+1}, \bm{\mu}^{k+1})\\
	& \geq  \sum_{i=1}^N \Big(\frac{1}{2} (c_i^k-L_{g_i})- \frac{2 (\Omega^i_1)^2}{ \rho_{\underline{k}} } \Big) \Vert \bm{\bar{X}}_i^{k+1}-\bm{\bar{X}}^k_i\Vert^2\\
	&+\sum_{i=1}^N (-\frac{2 (\Omega^i_2)^2}{ \rho_{\underline{k}} } ) \Vert \bm{\bar{X}}^{k}_i-\bm{\bar{X}}^{k-1}_i \Vert^2\\
	\end{split}
	\end{equation*}
	
	Thus, we have
	\begin{equation*}
	\begin{split}
	&\Phi_{\beta_k}(\bm{W}^k)\!-\!\Phi_{\beta_{k+1}}(\bm{W}^{k+1})\geq \!\!\sum_{i=1}^N (\beta_k\!-\!\frac{2 (\Omega^i_2)^2}{\rho_{\underline{k}} } ) \Vert \bm{\bar{X}}^{k}_i\!-\!\bm{\bar{X}}^{k-1}_i \Vert^2\\
	&+ \sum_{i=1}^N \Big(\frac{1}{2} (c_i^k\!-\!L_{g_i})\!-\!\! \frac{2 (\Omega^i_1)^2}{\rho_{\underline{k}} }\!-\!\beta_{k+1} \Big) \Vert \bm{\bar{X}}_i^{k+1}\!-\!\bm{\bar{X}}^k_i\Vert^2\\
	\end{split}
	\end{equation*}
	
	\textbf{Proposition} 6 is concluded. 
\end{proof}

\subsection{Proof of Proposition 7}
\begin{proof}
	According to  \textbf{\textrm{Proposition}} \ref{prop11}  and by summing up (\ref{(27)}) for $k \in [\underline{k}, \bar{k}-1]$ ($\bar{k} \in \mathbb{R}$, $\bar{k}>\underline{k}$), we have
	\begin{equation}
	\begin{split}
	&\Phi_{\beta_{\underline{k}}}(\bm{W}^{\underline{k}})-\Phi_{\beta_{\bar{k}}}(\bm{W}^{\bar{k}}) \\
	&\geq \sum_{k=\underline{k}}^{\bar{k}-1} b^k_1   \Vert \bm{\bar{X}}^{k+1}-\bm{\bar{X}}^{k} \Vert^2+\sum_{k=\underline{k}}^K b^k_2   \Vert \bm{\bar{X}}^{k}-\bm{\bar{X}}^{k-1}\Vert^2\\
	& \geq b_1 \sum_{k=\underline{k}}^{\bar{k}-1}   \Vert \bm{\bar{X}}^{k}-\bm{\bar{X}}^{k-1} \Vert^2+b_2 \sum_{k=\underline{k}}^{\bar{k}-1}  \Vert \bm{\bar{X}}^{k+1}-\bm{\bar{X}}^k \Vert^2\\
	\end{split}
	\end{equation}
	where $b_1=\min\limits_{k \in [ \underline{k}, \bar{k}-1] } b^k_1$ and $b_2=\min\limits_{k \in [ \underline{k}, \bar{k}-1] } b^k_2$.
	
	Since $\Phi_{\beta_k} (\bm{W}^k) \geq \inf_{\bm{\bar{X}}, \bm{Z}}  \mathbb{L}_{\rho_{k}} (\bm{\bar{X}}^k, \bm{Z}^k, \bm{\lambda}^k, \bm{\mu}^k) >-\infty$  (see {\em{(A6)}}), let $\bar{k}\rightarrow +\infty$, we have
	\begin{equation} \label{(48)}
	\begin{split}
	& b_1 \sum_{k=\underline{k}}^{+\infty} \Vert \bm{\bar{X}}^{k+1}-\bm{\bar{X}}^k \Vert^2+ b_2 \sum_{k=\underline{k}}^{+\infty} \Vert \bm{\bar{X}}^{k}-\bm{\bar{X}}^{k-1} \Vert^2 \\
	&\leq \Phi_{\beta_{\underline{k}}} (\bm{W}^{\underline{k}})-\lim_{k\in +\infty} \Phi_{\beta_{k}} (\bm{W}^{k}) <+\infty
	\end{split}
	\end{equation}
	
		If  \textbf{Condition} (a) is satisfied, 
		 we have  $b_1, b_2 >0$,  thus
	\begin{equation}
	\begin{split}
	\sum_{k=\underline{k}}^{+\infty} \Vert \bm{\bar{X}}^{k+1}-\bm{\bar{X}}^k\Vert^2 < \infty
	\end{split}
	\end{equation}
	
	This above implies 
	\begin{equation} \label{(49)}
	\begin{split}
	& \lim_{k \rightarrow+\infty }\Vert \bm{\bar{X}}^{k+1}-\bm{\bar{X}}^k\Vert \rightarrow 0\\
	& \lim_{k \rightarrow+\infty }\Vert \bm{\bar{X}}_i^{k+1}-\bm{\bar{X}}_i^k\Vert \rightarrow 0, \forall i \in \mathcal{N}. \\
	\end{split}
	\end{equation}
	
	Further, based on \textbf{Proposition} \ref{prop9}, we have
	\begin{equation}
	\begin{split}
	& \lim_{k \rightarrow  +\infty} \Big \Vert \bm{\gamma}_i^{k+1} -\bm{\gamma}_i^k \Big \Vert \rightarrow 0,\\
	& \lim_{k \rightarrow  +\infty} \Big \Vert \bm{\lambda}_i^{k+1} -\bm{\lambda}_i^k \Big \Vert \rightarrow 0,\\
	& \lim_{k \rightarrow  +\infty} \Big \Vert \bm{\mu}_i^{k+1} -\bm{\mu}_i^k \Big \Vert \rightarrow 0, ~~\forall i \in \mathcal{N}. \\
	\end{split}
	\end{equation}
\end{proof}

\subsection{Proof of Proposition 8}
\begin{proof}
	\emph{(i)} the subgradients of $\Phi_{\beta_{k+1}}  (\bm{W}^{k+1})$ w.r.t  $\bm{\bar{X}}_i$:
	\small
	\begin{eqnarray} \label{63}
	\begin{split}
	& \Vert \nabla_{\bm{\bar{X}}_i} \Phi_{\beta_{k+1}} (\bm{W}^{k+1}) \Vert \\
	& \!=\!\Vert \nabla_{\bm{\bar{X}}_i} \mathbb{L}_{\rho_{\underline{k}}} (\bm{\bar{X}}^{k+1}, \bm{Z}^{k+1}, \bm{\lambda}^{k+1}, \bm{\bm{\mu}^{k+1}})\!+\!2 \beta_{k+1} (\bm{\bar{X}}_i^{k+1}-\bm{\bar{X}}^k_i ) \Vert \\
	&\!=\!\Vert \nabla_{\bm{\bar{X}}_i} \mathbb{L}_{\rho_{\underline{k}}} (\bm{\bar{X}}^{k+1}, \bm{Z}^{k+1}, \bm{\lambda}^{k}, \bm{\bm{\mu}^{k}})\!\!+\!\! \big(\bm{F}_i(\bm{\bar{X}}_i^{k+1}) )^T \big( \bm{\gamma}_i^{k+1}\!\!-\!\!\bm{\gamma}_i^{k} \big) \\
	&\quad +2 \beta_{k+1} ( \bm{\bar{X}}_i^{k+1}-\bm{\bar{X}}^k_i ) \Vert \\
	&\leq (L_{g_i}+c^i_k) \Vert \bm{\bar{X}}^{k+1}_i-\bm{\bar{X}}^k_i\Vert\\
	&\quad +B \Vert \bm{\gamma}_i^{k+1}-\gamma_i^{k} \Vert+2 \beta_{k+1} \Vert \bm{\bar{X}}^{k+1}_i-\bm{\bar{X}}^{k}_i\Vert \\
	&\leq\big( L_{g_i} +c^i_k+ \Omega^i_1 B +2 \beta_{k+1} \big) \Vert \bm{X}^{k+1}_i-\bm{X}^k_i\Vert\\
	&\quad+ \Omega^i_2 B \Vert \bm{X}^{k}_i-\bm{X}^{k-1}_i\Vert\\
	\end{split}
	\end{eqnarray}
	\normalsize
	where the first inequality  is derived from \textbf{Proposition} \ref{prop5} and the definition $B=\sup_{k \geq \underline{k}} \Vert \bm{F}_i(\bm{\bar{X}}_i^{k+1})  \Vert$.  The second  inequality  is derived by invoking \textbf{Proposition} \ref{prop9}.
	
	

	\emph{(ii)} the subgradients of  $\Phi_{\beta_{k+1}}  (\bm{W}^{k+1})$ w.r.t. $\bm{Z}$:
	\small
	\begin{equation}  \label{(38)}
	\begin{split}
	&\nabla_{\bm{Z}} \Phi_{\beta_{k+1}} (\bm{W}^{k+1})  \\
	&=\nabla_{\bm{Z} } \mathbb{L}_{\rho_{k}} (\bm{\bar{X}}^{k+1}, \bm{Z}^{k+1}, \bm{\lambda}^{k+1}, \bm{\bm{\mu}^{k+1}})+\mathcal{N}_{\mathcal{\bm{X}}}(\bm{Z})\\
	&=-\sum_{i=1}^N \big(\bm{E}_i\big)^T \bm{\mu}^{k+1}_i+\mathcal{N}_{\mathcal{X}}(\bm{Z}^{k+1})
	\end{split}
	\end{equation}
	\normalsize

	Based on the first-order optimality condition of  subproblem (\ref{(6)}), we have
	\small
	\begin{equation} \label{(56)}
	\begin{split}
	&-\!\sum_{i=1}^N \big( \bm{E}_i \big)^T \bm{\mu}^{k}_i\!-\!\sum_{i=1}^N  \rho_{k}\big( \bm{E}_i \big)^T \big( \bm{A}_i \bm{\bar{X}}^k_i\!-\!\bm{E}_i \bm{Z}^{k+1}\big)\\
	&+\mathcal{N}_{\mathcal{X}}(\bm{Z}^{k+1})=0
	\end{split}
	\end{equation}
	\normalsize
	
	Further, based on $\bm{\mu}^{k+1}_i\!=\!\bm{\mu}^{k}_i\!+\!\rho_{k} \big( \bm{A}_i \bm{X}_i^{k+1}\!\!-\!\bm{E}_i  \bm{Z}^{k+1} \big)$ and by rearranging  (\ref{(56)}), we have
	\small
	\begin{equation}  \label{(45)}
	\begin{split}
	&-\sum_{i=1}^N\big( \bm{E}_i \big)^T  \bm{\mu}^{k+1}_i+\mathcal{N}_{\mathcal{X}}(\bm{Z}^{k+1})\\
	&=\sum_{i=1}^N  \rho_{\underline{k}} \big( \bm{E}_i \big)^T \big(  \bm{A}_i \bm{X}^{k+1}_i-\bm{A}_i\bm{X}^k_i\big)
	\end{split}
	\end{equation}
	\normalsize
	
	By combining (\ref{(38)}) with (\ref{(45)}), we have
	\small
	\begin{equation}  \label{67}
	\begin{split}
	& \Vert \nabla_{\bm{Z}} \Phi_{\beta_{k+1}} (\bm{W}^{k+1})\Vert\!=\!\Vert \sum_{i=1}^N  \rho_{\underline{k}} \big( \bm{E}_i \big)^T\! \bm{A}_i \big(  \bm{X}^{k+1}_i\!-\!\bm{X}^{k}_i\big) \Vert \\
	&\leq \rho_{\underline{k}} \sum_{i=1}^N \Vert \big(\bm{E}_i\big)^T \bm{A}_i(\bm{X}^{k+1}_i-\bm{X}^k_i)\Vert \\
	& \leq \rho_{\underline{k}} \lambda_{\max}((\bm{E}_i\big)^T \bm{A}_i) \sum_{i=1}^N \Vert\bm{X}^{k+1}_i-\bm{X}^k_i\Vert
	\end{split}
	\end{equation}
	\normalsize
	where $\lambda_{\max}((\bm{E}_i\big)^T \bm{A}_i)$ denotes the maximum eigenvalue of matrix $(\bm{E}_i\big)^T \bm{A}_i$.
	
	\emph{(iii)} the subgradients of  $\Phi_{\beta_{k+1}}  (\bm{W}^{k+1})$ w.r.t. the Lagrangian multipliers $\bm{\lambda}$ and $\bm{\mu}$:
	\begin{equation} \label{(57)}
	\begin{split}
	& \nabla_{\bm{ \lambda}_i } \Phi_{\beta_{k+1}} (\bm{W}^{k+1})\!\!=\!\rho_{\underline{k}} h_i(\bm{X}^{k+1}_i) = \frac{\bm{\lambda}_i^{k+1}\!-\!\bm{\lambda}_i^k}{\rho_{\underline{k}}} \\
	&\nabla_{\bm{\mu}_i } \Phi_{\beta_{k+1}} (\bm{W}^{k+1})\!\!=\!\! \bm{A}_i \bm{\bar{X}}^{k+1}_i\!\!-\!\bm{E}_i  \bm{Z}^{k+1}\!=\!\frac{\bm{\mu}^{k+1}_i-\bm{\mu}^k_i}{\rho_{\underline{k}}} 
	\end{split}
	\end{equation}
	
Based on \textbf{Proposition} \ref{prop9}, we have
	\begin{equation} \label{69}
	\begin{split}
	&\Vert  \nabla_{\bm{ \gamma}_i } \Phi_{\beta_{k+1}}  (\bm{W}^{k+1})\Vert =\Vert \frac{\bm{\gamma}_i^{k+1}\!-\!\bm{\gamma}_i^k}{\rho_{\underline{k}}} \Vert  \\
	&\leq \frac{\Omega^i_1}{ \rho_{\underline{k}}} \Vert \bm{\bar{X}}^{k+1}_i\!\!-\!\!\bm{\bar{X}}^k_i\Vert \!\!+\!\!\frac{\Omega^i_2}{\rho_{\underline{k}}} \Vert  \bm{\bar{X}}^k_i\!\!-\!\!\bm{\bar{X}}^{k-1}_i \Vert 
	\end{split}
	\end{equation}
	
	\emph{(iv)} the subgradients of  $\Phi_{\beta_{k+1}}  (\bm{W}^{k+1})$ w.r.t. $\bm{U}$:
\begin{equation} \label{70}
\begin{split}
& \Vert \nabla_{\bm{U}} \Phi_{\beta_{k+1}} (\bm{W}^{k+1})\Vert\!=\!2\beta_{k+1}\Vert  \bm{\bar{X}}^{k+1}-\bm{\bar{X}}^{k} \Vert
\end{split}
\end{equation}

	By combining \eqref{63}, \eqref{67}, \eqref{69} and \eqref{70}, we have
	\begin{equation}
	\begin{split}
	&\Vert \nabla  \Phi_{\beta_{k+1}} (\bm{W}^{k+1}) \Vert \leq  \sum_{i=1}^N  \big( L_{g_i}+c^k_i +\Omega^i_1 B\\
	&\quad \quad \quad  +4\beta_{k+1}+\rho_{\underline{k}}+\frac{\Omega^i_1}{\rho_{\underline{k}}}\big)\Vert \bm{\bar{X}}^{k+1}_i-\bm{\bar{X}}^k_i\Vert \\
	&\quad + \sum_{i=1}^N \big( \Omega^i_2B +\frac{\Omega^i_2}{\rho_{\underline{k}}}\big)   \Vert \bm{\bar{X}}^{k}_i-\bm{\bar{X}}^{k-1}_i\Vert\\
	\end{split}
	\end{equation}
	\textbf{Proposition}  8 is concluded. 
\end{proof}

\section{Proof of \textbf{Proposition} 9}
\begin{proof}

	By invoking \textbf{Proposition} \ref{prop13}  and the Cauchy–Schwarz inequality  $(\frac{\sum_{i=1}^N x_i}{N})^2 \leq \frac{1}{N} \sum_{i=1}^N (x_i)^2$, we have 
	
	\begin{equation} \label{(50)}
	\begin{split}
	\Big [ \textrm{dist} &\big( \nabla  \Phi_{\beta_{k+1}}( \bm{W}^{k+1}), 0\big)  \Big]^2 \!\leq\!\! 2 N  (b^k_3)^2   \sum_{i=1}^N \Vert \bm{X}^{k+1}_i\!-\!\bm{X}^k_i\Vert^2 \\
	&\quad \quad +2 N (b^k_4)^2 \sum_{i=1}^N  \Vert \bm{\bar{X}}^{k}_i-\bm{\bar{X}}^{k-1}_i\Vert^2 \\
	\end{split}
	\end{equation}
	
	Besides, according to \textbf{\emph{Proposition}} \ref{prop11}, we have
	\begin{equation} \label{(51)}
	\begin{split}
	&\Phi_{\beta_k}(\bm{W}^k)-\Phi_{\beta_{k+1}}(\bm{W}^{k+1}) \\
	& \geq b^k_1  \sum_{i=1}^N \Vert \bm{\bar{X}}_i^{k+1}-\bm{\bar{X}}^k_i\Vert^2+b^k_2 \sum_{i=1}^N \Vert \bm{\bar{X}}^{k}_i-\bm{\bar{X}}^{k-1}_i \Vert^2\\
	\end{split}
	\end{equation}
	
	Thus, by combining (\ref{(50)}), (\ref{(51)}) and \textbf{Condition} (b),  we have 
	\begin{equation} \label{(58)}
	\begin{split}
	\Big [\textrm{dist} \big( \nabla &\Phi_{\beta_{k+1}}( \bm{W}^{k+1}), 0\big) \Big ]^2\\
	& \!\leq \!{2N}{ \nu_k } \big( \Phi_{\beta_k}( \bm{W}^k)-\Phi_{\beta_{k+1}}( \bm{W}^{k+1}) \big)
	\end{split}
	\end{equation}
\end{proof}

\section{Proof of \textbf{Corollary} 1}
\begin{proof}
	If the stepsize $c^k_i$ is selected according to \textbf{Algorithm} \ref{CADMM}, we have
	\begin{equation}\label{(67)}
	\begin{split}
	&g_i(\bm{\bar{X}}_i^{k+1}, \bm{\lambda}^k, \rho_{k})+\alpha \Vert \bm{\bar{X}}^{k+1}_i  -\bm{\bar{X}}_i^k \Vert^2 \\
	& \quad  \leq  g_i(\bm{\bar{X}}^k_i, \bm{\lambda}^k, \rho_{k})+\langle  \nabla_{\bm{\bar{X}}_i} g_i(\bm{\bar{X}}^k_i, \bm{\lambda}^k, \rho_{k} ), \bm{\bar{X}}^{k+1}_i-\bm{\bar{X}}^k_i\rangle\\
	&\quad \quad  \quad \quad \quad +\frac{c_i^k}{2} \big \Vert  \bm{\bar{X}}^{k+1}_i-\bm{\bar{X}}_i^k \big \Vert^2
	\end{split}
	\end{equation}
	
	Based on subproblem (\ref{(7)}), we have
	\begin{equation} \label{(68)}
	\begin{split}
	& (\bm{\mu}^{k}_i)^T ( \bm{A}_i \bm{\bar{X}}^{k+1}_i-\bm{E}_i \bm{Z}^{k+1} )+\frac{\rho_{k}}{2} \Vert \bm{A}_i \bm{\bar{X}}^{k+1}_i\!-\!\bm{E}_i \bm{Z}^{k+1} \Vert^2\!\\
	&g_i(\bm{\bar{X}}_i^{k+1}, \bm{\lambda}^k, \rho_{k})+a \Vert \bm{\bar{X}}^{k+1}_i  -\bm{\bar{X}}_i^k \Vert^2  \\
	&\leq   (\bm{\mu}^{k}_i)^T ( \bm{A}_i \bm{\bar{X}}^{k}_i-\bm{E}_i \bm{Z}^{k+1} )+\frac{\rho_{k}}{2} \Vert  \bm{\bm{A}_i \bar{X}}^{k}_i\!-\!\bm{E}^i \bm{Z}^{k+1}_j \Vert^2 \\
	&+g_i(\bm{\bar{X}}^k_i, \bm{\lambda}^k, \rho_{k})
	\end{split}
	\end{equation}
	
	By combining (\ref{(67)}) with (\ref{(68)}), we have
	\begin{equation}  \label{(69)}
	\begin{split}
	& (\bm{\mu}^{k}_i)^T (\bm{A}_i \bm{\bar{X}}^{k+1}_i-\bm{E}_i \bm{Z}^{k+1} )+\!\frac{\rho_{k}}{2} \Vert  \bm{A}_i \bm{\bar{X}}^{k+1}_i\!-\!\bm{E}^i_j \bm{Z}^{k+1}_j \Vert^2\!\\
	&+\!\langle \nabla_{\bm{\bar{X}}_i} g_i (\bm{\bar{X}}^k_i, \bm{\lambda}_i^k, \rho_{k}), \bm{\bar{X}}^{k+1}_i\!-\!\bm{\bar{X}}^k_i \rangle+\frac{c^k_i}{2} \Vert  \bm{\bar{X}}^{k+1}_i-\bm{\bar{X}}^k_i\Vert^2  \\
	&\leq (\bm{\mu}^{k}_i)^T ( \bm{A}_i \bm{\bar{X}}^{k}_i-\bm{E}_i \bm{Z}^{k+1})+\!\frac{\rho_{k}}{2} \Vert \bm{A}_i \bm{\bar{X}}^{k}_i\!-\!\bm{E}_i \bm{Z}^{k+1} \Vert^2 \\
	\end{split}
	\end{equation}
	
	By summing up (\ref{(69)}) $\forall i \in \mathcal{N}$, we have
	\begin{equation}
	\begin{split}
	&\mathbb{L}_{\rho_{k}}(\bm{\bar{X}}^{k+1}, \bm{Z}^{k+1}, \bm{\lambda}^k, \bm{\mu}^k)+\alpha  \Vert \bm{\bar{X}}^{k+1}  -\bm{\bar{X}}^k \Vert^2\\
	& \quad \quad \leq \mathbb{L}_{\rho_{k}}(\bm{\bar{X}}^{k}, \bm{Z}^{k}, \bm{\lambda}^k, \bm{\mu}^k)
	\end{split}
	\end{equation}
	

	In this case, the convergence of PLDM can be illustrate analogously following  \textbf{Theorem} 1 by replacing $\frac{1}{2}( c^k_i-L_{g_i})$ with $\alpha$. 
	Also,  to guarantee  \textbf{Condition} (a): $\alpha$ should be selected and satisfy  $\alpha \geq \frac{4(\Omega^i_1)^2}{\rho_{\underline{k}}}+4\beta_{k+1}$.
	
\end{proof}



%
\bibliographystyle{ieeetr}
\bibliography{reference}

\begin{thebibliography}{10}

\bibitem{yang2017distributed}
Y.~Yang, Q.-S. Jia, G.~Deconinck, X.~Guan, Z.~Qiu, and Z.~Hu, ``Distributed
  coordination of ev charging with renewable energy in a microgrid of
  buildings,'' {\em IEEE Transactions on Smart Grid}, vol.~9, no.~6,
  pp.~6253--6264, 2017.

\bibitem{yang2018decentralized}
Y.~Yang, Q.-S. Jia, X.~Guan, X.~Zhang, Z.~Qiu, and G.~Deconinck,
  ``Decentralized ev-based charging optimization with building integrated wind
  energy,'' {\em IEEE Transactions on Automation Science and Engineering},
  vol.~16, no.~3, pp.~1002--1017, 2018.

\bibitem{yang2017stochastic}
Y.~Yang, Q.-S. Jia, and X.~Guan, ``Stochastic coordination of aggregated
  electric vehicle charging with on-site wind power at multiple buildings,'' in
  {\em 2017 IEEE 56th Annual Conference on Decision and Control (CDC)},
  pp.~4434--4439, IEEE, 2017.

\bibitem{yang2016joint}
Y.~Yang, Q.-S. Jia, and X.~Guan, ``The joint scheduling of ev charging load
  with building mounted wind power using simulation-based policy improvement,''
  in {\em 2016 International Symposium on Flexible Automation (ISFA)},
  pp.~165--170, IEEE, 2016.

\bibitem{long2021efficient}
T.~Long, Q.-S. Jia, G.~Wang, and Y.~Yang, ``Efficient real-time ev charging
  scheduling via ordinal optimization,'' {\em IEEE Transactions on Smart Grid},
  vol.~12, no.~5, pp.~4029--4038, 2021.

\bibitem{yang2022optimal}
Y.~Yang, Y.~Chen, G.~Hu, and C.~J. Spanos, ``Optimal network charge for
  peer-to-peer energy trading: A grid perspective,'' {\em arXiv preprint
  arXiv:2205.01945}, 2022.

\bibitem{chen2022towards}
Y.~Chen, Y.~Yang, and X.~Xu, ``Towards transactive energy: An analysis of
  information-related practical issues,'' {\em Energy Conversion and
  Economics}, vol.~3, no.~3, pp.~112--121, 2022.

\bibitem{yang2021optimal}
Y.~Yang, G.~Hu, and C.~J. Spanos, ``Optimal sharing and fair cost allocation of
  community energy storage,'' {\em IEEE Transactions on Smart Grid}, vol.~12,
  no.~5, pp.~4185--4194, 2021.

\bibitem{yang2020selling}
Y.~Yang, U.~Agwan, G.~Hu, and C.~J. Spanos, ``Selling renewable utilization
  service to consumers via cloud energy storage,'' {\em arXiv preprint
  arXiv:2012.14650}, 2020.

\bibitem{radhakrishnan2016token}
N.~Radhakrishnan, Y.~Su, R.~Su, and K.~Poolla, ``Token based scheduling for
  energy management in building hvac systems,'' {\em Applied energy}, vol.~173,
  pp.~67--79, 2016.

\bibitem{yang2020hvac}
Y.~Yang, G.~Hu, and C.~J. Spanos, ``Hvac energy cost optimization for a
  multizone building via a decentralized approach,'' {\em IEEE Transactions on
  Automation Science and Engineering}, vol.~17, no.~4, pp.~1950--1960, 2020.

\bibitem{yang2021distributed}
Y.~Yang, S.~Srinivasan, G.~Hu, and C.~J. Spanos, ``Distributed control of
  multizone hvac systems considering indoor air quality,'' {\em IEEE
  Transactions on Control Systems Technology}, vol.~29, no.~6, pp.~2586--2597,
  2021.

\bibitem{yang2021stochastic}
Y.~Yang, G.~Hu, and C.~J. Spanos, ``Stochastic optimal control of hvac system
  for energy-efficient buildings,'' {\em IEEE Transactions on Control Systems
  Technology}, vol.~30, no.~1, pp.~376--383, 2021.

\bibitem{yuan2016regularized}
D.~Yuan, D.~W. Ho, and S.~Xu, ``Regularized primal-dual subgradient method for
  distributed constrained optimization.,'' {\em IEEE Trans. Cybernetics},
  vol.~46, no.~9, pp.~2109--2118, 2016.

\bibitem{terelius2011decentralized}
H.~Terelius, U.~Topcu, and R.~Murray, ``Decentralized multi-agent optimization
  via dual decomposition,'' in {\em 18th IFAC World Congress, 28 August 2011
  through 2 September 2011, Milano, Italy}, pp.~11245--11251, 2011.

\bibitem{boyd2011distributed}
S.~Boyd, N.~Parikh, E.~Chu, B.~Peleato, J.~Eckstein, {\em et~al.},
  ``Distributed optimization and statistical learning via the alternating
  direction method of multipliers,'' {\em Foundations and
  Trends{\textregistered} in Machine learning}, vol.~3, no.~1, pp.~1--122,
  2011.

\bibitem{maharjan2016demand}
S.~Maharjan, Q.~Zhu, Y.~Zhang, S.~Gjessing, and T.~Ba{\c{s}}ar, ``Demand
  response management in the smart grid in a large population regime,'' {\em
  IEEE Transactions on Smart Grid}, vol.~7, no.~1, pp.~189--199, 2016.

\bibitem{cherukuri2018distributed}
A.~Cherukuri and J.~Cort{\'e}s, ``Distributed coordination of ders with storage
  for dynamic economic dispatch,'' {\em IEEE Transactions on Automatic
  Control}, vol.~63, no.~3, pp.~835--842, 2018.

\bibitem{bertsekas1997nonlinear}
D.~P. Bertsekas, ``Nonlinear programming,'' {\em Journal of the Operational
  Research Society}, vol.~48, no.~3, pp.~334--334, 1997.

\bibitem{khamaru2018convergence}
K.~Khamaru and M.~J. Wainwright, ``Convergence guarantees for a class of
  non-convex and non-smooth optimization problems,'' {\em arXiv preprint
  arXiv:1804.09629}, 2018.

\bibitem{lacoste2016convergence}
S.~Lacoste-Julien, ``Convergence rate of frank-wolfe for non-convex
  objectives,'' {\em arXiv preprint arXiv:1607.00345}, 2016.

\bibitem{zeng2018nonconvex}
J.~Zeng and W.~Yin, ``On nonconvex decentralized gradient descent,'' {\em IEEE
  Transactions on signal processing}, vol.~66, no.~11, pp.~2834--2848, 2018.

\bibitem{bolte2014proximal}
J.~Bolte, S.~Sabach, and M.~Teboulle, ``Proximal alternating linearized
  minimization for nonconvex and nonsmooth problems,'' {\em Mathematical
  Programming}, vol.~146, no.~1-2, pp.~459--494, 2014.

\bibitem{xu2013block}
Y.~Xu and W.~Yin, ``A block coordinate descent method for regularized
  multiconvex optimization with applications to nonnegative tensor
  factorization and completion,'' {\em SIAM Journal on imaging sciences},
  vol.~6, no.~3, pp.~1758--1789, 2013.

\bibitem{gu2018inexact}
B.~Gu, D.~Wang, Z.~Huo, and H.~Huang, ``Inexact proximal gradient methods for
  non-convex and non-smooth optimization,'' in {\em Thirty-Second AAAI
  Conference on Artificial Intelligence}, 2018.

\bibitem{nikolova2018alternating}
M.~Nikolova and P.~Tan, ``Alternating structure-adapted proximal gradient
  descent for nonconvex block-regularised problems,'' 2018.

\bibitem{attouch2010proximal}
H.~Attouch, J.~Bolte, P.~Redont, and A.~Soubeyran, ``Proximal alternating
  minimization and projection methods for nonconvex problems: An approach based
  on the kurdyka-{\l}ojasiewicz inequality,'' {\em Mathematics of Operations
  Research}, vol.~35, no.~2, pp.~438--457, 2010.

\bibitem{davis2016convergence}
D.~Davis and W.~Yin, ``Convergence rate analysis of several splitting
  schemes,'' in {\em Splitting methods in communication, imaging, science, and
  engineering}, pp.~115--163, Springer, 2016.

\bibitem{davis2017faster}
D.~Davis and W.~Yin, ``Faster convergence rates of relaxed peaceman-rachford
  and admm under regularity assumptions,'' {\em Mathematics of Operations
  Research}, vol.~42, no.~3, pp.~783--805, 2017.

\bibitem{he20121}
B.~He and X.~Yuan, ``On the o(1/n) convergence rate of the douglas--rachford
  alternating direction method,'' {\em SIAM Journal on Numerical Analysis},
  vol.~50, no.~2, pp.~700--709, 2012.

\bibitem{yang2022survey}
Y.~Yang, X.~Guan, Q.-S. Jia, L.~Yu, B.~Xu, and C.~J. Spanos, ``A survey of admm
  variants for distributed optimization: Problems, algorithms and features,''
  {\em arXiv preprint arXiv:2208.03700}, 2022.

\bibitem{erseghe2014distributed}
T.~Erseghe, ``Distributed optimal power flow using admm,'' {\em IEEE
  transactions on power systems}, vol.~29, no.~5, pp.~2370--2380, 2014.

\bibitem{xu2012alternating}
Y.~Xu, W.~Yin, Z.~Wen, and Y.~Zhang, ``An alternating direction algorithm for
  matrix completion with nonnegative factors,'' {\em Frontiers of Mathematics
  in China}, vol.~7, no.~2, pp.~365--384, 2012.

\bibitem{yang2022proximal}
Y.~Yang, Q.-S. Jia, Z.~Xu, X.~Guan, and C.~J. Spanos, ``Proximal admm for
  nonconvex and nonsmooth optimization,'' {\em arXiv preprint
  arXiv:2205.01951}, 2022.

\bibitem{hong2016convergence}
M.~Hong, Z.-Q. Luo, and M.~Razaviyayn, ``Convergence analysis of alternating
  direction method of multipliers for a family of nonconvex problems,'' {\em
  SIAM Journal on Optimization}, vol.~26, no.~1, pp.~337--364, 2016.

\bibitem{magnusson2015convergence}
S.~Magn{\'u}sson, P.~C. Weeraddana, M.~G. Rabbat, and C.~Fischione, ``On the
  convergence of alternating direction lagrangian methods for nonconvex
  structured optimization problems,'' {\em IEEE Transactions on Control of
  Network Systems}, vol.~3, no.~3, pp.~296--309, 2015.

\bibitem{chatzipanagiotis2017convergence}
N.~Chatzipanagiotis and M.~M. Zavlanos, ``On the convergence of a distributed
  augmented lagrangian method for nonconvex optimization,'' {\em IEEE
  Transactions on Automatic Control}, vol.~62, no.~9, pp.~4405--4420, 2017.

\bibitem{wang2018convergence}
F.~Wang, W.~Cao, and Z.~Xu, ``Convergence of multi-block bregman admm for
  nonconvex composite problems,'' {\em Science China Information Sciences},
  vol.~61, no.~12, p.~122101, 2018.

\bibitem{liu2017real}
Y.~Liu, J.-H. Hours, G.~Stathopoulos, and C.~N. Jones, ``Real-time distributed
  algorithms for nonconvex optimal power flow,'' in {\em 2017 American Control
  Conference (ACC)}, pp.~3380--3385, IEEE, 2017.

\bibitem{magnusson2015distributed}
S.~Magn{\'u}sson, P.~C. Weeraddana, and C.~Fischione, ``A distributed approach
  for the optimal power-flow problem based on admm and sequential convex
  approximations,'' {\em IEEE Transactions on Control of Network Systems},
  vol.~2, no.~3, pp.~238--253, 2015.

\bibitem{Giesen2018DistributedCO}
J.~Giesen and S.~Laue, ``Distributed convex optimization with many non-linear
  constraints,'' 2018.

\bibitem{wang2017nonconvex}
J.~Wang and L.~Zhao, ``Nonconvex generalizations of admm for nonlinear equality
  constrained problems,'' {\em arXiv preprint arXiv:1705.03412}, 2017.

\bibitem{sun2019two}
K.~Sun and X.~A. Sun, ``A two-level distributed algorithm for general
  constrained non-convex optimization with global convergence,'' {\em arXiv
  preprint arXiv:1902.07654}, 2019.

\bibitem{shi2017penalty}
Q.~Shi, M.~Hong, X.~Fu, and T.-H. Chang, ``Penalty dual decomposition method
  for nonsmooth nonconvex optimization,'' {\em arXiv preprint
  arXiv:1712.04767}, 2017.

\bibitem{bolte2018nonconvex}
J.~Bolte, S.~Sabach, and M.~Teboulle, ``Nonconvex lagrangian-based
  optimization: monitoring schemes and global convergence,'' {\em Mathematics
  of Operations Research}, vol.~43, no.~4, pp.~1210--1232, 2018.

\bibitem{kurdyka1998gradients}
K.~Kurdyka, ``On gradients of functions definable in o-minimal structures,'' in
  {\em Annales de l'institut Fourier}, vol.~48, pp.~769--783, 1998.

\bibitem{lojasiewicz1963propriete}
S.~Lojasiewicz, ``Une propri{\'e}t{\'e} topologique des sous-ensembles
  analytiques r{\'e}els, in “les {\'e}quations aux d{\'e}riv{\'e}es
  partielles (paris, 1962)” {\'e}ditions du centre national de la recherche
  scientifique,'' 1963.

\bibitem{bolte2007lojasiewicz}
J.~Bolte, A.~Daniilidis, and A.~Lewis, ``The {\l}ojasiewicz inequality for
  nonsmooth subanalytic functions with applications to subgradient dynamical
  systems,'' {\em SIAM Journal on Optimization}, vol.~17, no.~4,
  pp.~1205--1223, 2007.

\bibitem{bolte2010characterizations}
J.~Bolte, A.~Daniilidis, O.~Ley, and L.~Mazet, ``Characterizations of
  {\l}ojasiewicz inequalities: subgradient flows, talweg, convexity,'' {\em
  Transactions of the American Mathematical Society}, vol.~362, no.~6,
  pp.~3319--3363, 2010.

\bibitem{bolte2019proximal}
R.~I. Bot, E.~R. Csetnek, and D.-K. Nguyen, ``A proximal minimization algorithm
  for structured nonconvex and nonsmooth problems,'' {\em SIAM Journal on
  Optimization}, vol.~29, no.~2, pp.~1300--1328, 2019.

\bibitem{hours2014augmented}
J.-H. Hours and C.~N. Jones, ``An augmented lagrangian
  coordination-decomposition algorithm for solving distributed non-convex
  programs,'' in {\em 2014 American Control Conference}, pp.~4312--4317, Ieee,
  2014.

\bibitem{dhingra2018proximal}
N.~K. Dhingra, S.~Z. Khong, and M.~R. Jovanovic, ``The proximal augmented
  lagrangian method for nonsmooth composite optimization,'' {\em IEEE
  Transactions on Automatic Control}, 2018.

\bibitem{chatzipanagiotis2015augmented}
N.~Chatzipanagiotis, D.~Dentcheva, and M.~M. Zavlanos, ``An augmented
  lagrangian method for distributed optimization,'' {\em Mathematical
  Programming}, vol.~152, no.~1-2, pp.~405--434, 2015.

\bibitem{boggs1995sequential}
P.~T. Boggs and J.~W. Tolle, ``Sequential quadratic programming,'' {\em Acta
  numerica}, vol.~4, pp.~1--51, 1995.

\bibitem{kanzow2018augmented}
C.~Kanzow, D.~Steck, and D.~Wachsmuth, ``An augmented lagrangian method for
  optimization problems in banach spaces,'' {\em SIAM Journal on Control and
  Optimization}, vol.~56, no.~1, pp.~272--291, 2018.

\bibitem{bot2018proximal}
R.~I. Bot, E.~R. Csetnek, and D.-K. Nguyen, ``A proximal minimization algorithm
  for structured nonconvex and nonsmooth problems,'' {\em arXiv preprint
  arXiv:1805.11056}, 2018.

\bibitem{li2015accelerated}
H.~Li and Z.~Lin, ``Accelerated proximal gradient methods for nonconvex
  programming,'' in {\em Advances in neural information processing systems},
  pp.~379--387, 2015.

\bibitem{yang2019hvac}
Y.~Yang, G.~Hu, and C.~J. Spanos, ``Hvac energy cost optimization for a
  multi-zone building via a decentralized approach,'' {\em arXiv preprint
  arXiv:1905.10934}, 2019.

\bibitem{wang2017distributed}
Z.~Wang, G.~Hu, and C.~J. Spanos, ``Distributed model predictive control of
  bilinear hvac systems using a convexification method,'' in {\em Control
  Conference (ASCC), 2017 11th Asian}, pp.~1608--1613, IEEE, 2017.

\end{thebibliography}

%








\end{document}